\documentclass[english]{article}
\usepackage[T1]{fontenc}
\usepackage[latin9]{inputenc}
\usepackage{babel}

\usepackage{amsthm}
\usepackage{amsmath}
\usepackage{graphicx}
\usepackage{amssymb}
\usepackage{esint}
\usepackage[all]{xy}
\usepackage[unicode=true,pdfusetitle,
 bookmarks=true,bookmarksnumbered=false,bookmarksopen=false,
 breaklinks=false,pdfborder={0 0 1},backref=false,colorlinks=false]
 {hyperref}

\makeatletter
\makeatletter
\newtheorem*{rep@thm}{\rep@title}
\newcommand{\newreptheorem}[2]{%
\newenvironment{rep#1}[1][0,0]{%
\def\rep@title{#2 ##1}%
\begin{rep@thm}}%
{\end{rep@thm}}}
\makeatother
\theoremstyle{plain}
\newtheorem{thm}{Theorem}[section]
  \theoremstyle{plain}
  \newtheorem{prop}[thm]{Proposition}
  \newreptheorem{prop}{Proposition}
  \newreptheorem{thm}{Theorem}
  \theoremstyle{remark}
  \newtheorem*{acknowledgement*}{Acknowledgement}
  \theoremstyle{definition}
  \newtheorem{defn}[thm]{Definition}
  \theoremstyle{plain}
  \newtheorem*{cor*}{Corollary}
  \theoremstyle{plain}
  \newtheorem*{lem*}{Lemma}
  \theoremstyle{plain}
  \newtheorem*{prop*}{Proposition}
  \theoremstyle{plain}
  \newtheorem{cor}[thm]{Corollary}
  \theoremstyle{plain}
  \newtheorem{lem}[thm]{Lemma}
  \theoremstyle{remark}
  \newtheorem{rem}[thm]{Remark}
  \theoremstyle{plain}
  \newtheorem{conjecture}[thm]{Conjecture}

    \RequirePackage{rotating}                   
    \def\Hto{%
       \setbox0=\hbox{$\widehat{\mathit{HM}}$}
       \setbox1=\hbox{$\mathit{HM}$}
       \dimen0=1.1\ht0
       \advance\dimen0 by 1.17\ht1
       \smash{\mskip2mu\raise\dimen0\rlap{%
          \begin{turn}{180}
              {$\widehat{\phantom{\mathit{HM}}}$}
           \end{turn}} \mskip-2mu    
                \mathit{HM}
    }{\vphantom{\widehat{\mathit{HM}}}}{}}

\makeatother

\begin{document}

\title{Monopole Floer homology and Legendrian knots}

\author{Steven Sivek}
\maketitle
\begin{abstract}
We use monopole Floer homology for sutured manifolds to construct
invariants of Legendrian knots in a contact 3-manifold. These invariants
assign to a knot $K\subset Y$ elements of the monopole knot homology
$KHM(-Y,K)$, and they strongly resemble the knot Floer homology invariants
of Lisca, Ozsváth, Stipsicz, and Szabó. We prove several vanishing
results, investigate their behavior under contact surgeries, and use
this to construct many examples of non-loose knots in overtwisted
3-manifolds. We also show that these invariants are functorial with
respect to Lagrangian concordance.
\end{abstract}

\section{Introduction}

A knot $\mathcal{K}$ in a contact 3-manifold $(Y,\xi)$ is said to
be \emph{Legendrian} if the tangent vectors to $\mathcal{K}$ lie
in the contact planes $\xi$. In recent years, a variety of invariants
have been constructed to distinguish Legendrian knots which are topologically
identical. Notable examples include contact homology \cite{Eliashberg-CH},
especially the combinatorial version due to Chekanov \cite{Chekanov},
and invariant elements of knot Floer homology constructed using either
grid diagrams \cite{Legendrian-grid-diagrams} or open book decompositions
\cite{LOSS}; the last of these is due to Lisca, Ozsváth, Stipsicz,
and Szabó and is thus often called the {}``LOSS invariant.''

In order to construct a knot invariant from monopole Floer homology,
Kronheimer and Mrowka defined a monopole version of Juhász's sutured
Floer homology \cite{Juhasz-sutured} and declared the monopole knot
homology $KHM(Y,K)$ to be the sutured invariant of the complement
of $K$. It is natural to ask whether the LOSS invariant can be defined
in this setting, where the construction makes no use of Heegaard diagrams
or open books but instead proceeds by embedding the knot complement
in a closed $3$-manifold $\bar{Y}$ and computing $\Hto(\bar{Y})$
in certain $\mathrm{Spin}^{c}$-structures.

The goal of this paper is to present such a invariant. Namely, to
any Legendrian knot $\mathcal{K}\subset(Y,\xi)$ of topological type
$K$, we associate elements \[
\ell_{g}(\mathcal{K})\in KHM(-Y,K)\]
in monopole knot homology with local coefficients, which are invariant
up to automorphisms of $KHM$, for all integers $g\geq2$. (Conjecturally
these do not depend on $g$, so for convenience we shall omit it throughout
this introduction and the reader may fix any choice of $g$.) These
elements are obtained by choosing a particular contact structure $\bar{\xi}$
on the closed manifold $\bar{Y}$, so that $(Y-\mathcal{K},\xi|_{Y-\mathcal{K}})$
is a contact submanifold of $(\bar{Y},\bar{\xi})$, and letting $\ell_{g}(\mathcal{K})$
be the monopole contact invariant of $\bar{\xi}$.

The construction of $\ell(\mathcal{K})$ presents some advantages
and disadvantages over that of the LOSS invariant. It is hard to compute
in general, and it does not come with a natural bigrading the way
elements of knot Floer homology do. However, some vanishing and nonvanishing
results have very simple proofs, as do several theorems involving
contact surgery. For example:
\begin{repprop}[\ref{pro:overtwisted-torsion}]
If the complement of $\mathcal{K}$ is overtwisted, then $\ell(\mathcal{K})=0$.
\end{repprop}

\begin{repprop}[\ref{pro:stabilize2}]
Let $S_{+}(\mathcal{K})$ and $S_{-}(\mathcal{K})$ denote the positive
and negative stabilizations of a Legendrian knot $\mathcal{K}$. Then
$\ell(S_{+}S_{-}(\mathcal{K}))=0$ for all $\mathcal{K}$.
\end{repprop}
We expect something stronger to hold, namely that $\ell(S_{-}(\mathcal{K}))=\ell(\mathcal{K})$
and $\ell(S_{+}(\mathcal{K}))=0$, since the analogous statements
are true for the LOSS invariant. (See Conjecture \ref{con:stabilization}.)

\begin{repthm}[\ref{thm:+1-surgery}]
Let $\mathcal{K},\mathcal{S}\subset(Y,\xi)$ be disjoint, and let
$\mathcal{K}_{\mathcal{S}}\subset Y_{\mathcal{S}}$ denote the image
of $\mathcal{K}$ in the contact manifold $Y_{\mathcal{S}}$ obtained
by performing a contact $(+1)$-surgery on $\mathcal{S}$. Then there
is a map \[
KHM(-Y,K)\to KHM(-Y_{S},K_{S})\]
sending $\ell(\mathcal{K})$ to $\ell(\mathcal{K}_{\mathcal{S}})$.
\end{repthm}
These results are all known to be true for the LOSS invariant, as
are several consequences we will pursue in this paper. However, using
work of Mrowka and Rollin \cite{Mrowka-knots,Mrowka-cobordisms} on
the monopole contact invariant which is not known to be true in Heegaard
Floer homology, we can investigate one entirely new property of $\ell(\mathcal{K})$:
its behavior under Lagrangian concordance \cite{Chantraine}.
\begin{repthm}[\ref{thm:concordance-map}]
Let $\mathcal{K}_{0},\mathcal{K}_{1}\subset(Y,\xi)$ be Legendrian
knots, with $Y$ a homology $3$-sphere, and suppose that $\mathcal{K}_{0}$
is Lagrangian concordant to $\mathcal{K}_{1}$. Then there is a map
\[
KHM(-Y,K_{1})\to KHM(-Y,K_{0})\]
such that $\ell(\mathcal{K}_{1})\mapsto\ell(\mathcal{K}_{0})$.
\end{repthm}
The organization of this paper is as follows. In section \ref{sec:background}
we review the necessary background on sutured monopole homology and
the monopole contact invariant. We construct $\ell_{g}(\mathcal{K})$,
prove its invariance, and compute it for Legendrian unknots in section
\ref{sec:Legendrian-invariant}, and prove the vanishing theorems
mentioned above in section \ref{sec:vanishing}. In section \ref{sec:contact-surgery}
we investigate the effect of contact $(+1$)-surgery on $\ell(\mathcal{K})$,
and apply this to prove some nonvanishing results and to construct
many examples of non-loose knots in overtwisted contact manifolds.
Finally, in section \ref{sec:Lagrangian-concordance} we discuss the
behavior of $\ell(\mathcal{K})$ with respect to Lagrangian concordance.

Throughout this paper we will adopt the convention that letters in
the standard math font, such as $K$, refer to topological knots,
whereas the same letters in a script font, such as $\mathcal{K}$,
refer to Legendrian representatives of those knot types. We also remark
that Lekili \cite{Lekili-sfh} has shown that one can replace $\Hto$
with $HF^{+}$ in the Kronheimer-Mrowka construction of sutured monopole
homology in order to recover sutured Floer homology. Thus the reader
can apply the constructions in this paper to obtain a similar Legendrian
invariant in knot Floer homology, and everything in this paper will
still hold except the Lagrangian concordance results of section \ref{sec:Lagrangian-concordance}.
In this sense we conjecture that $\ell(\mathcal{K})$ is identical
to the LOSS invariant.
\begin{acknowledgement*}
An early version of this work formed part of my thesis at MIT under
the supervision of Tom Mrowka, who I thank for many useful discussions
and suggestions. I am also grateful to many others, in particular
Jon Bloom, John Etnyre, Peter Kronheimer, Yank\i{} Lekili, Lenny Ng,
Peter Ozsváth, Paul Seidel, Vera Vértesi, and Chris Wendl, for helpful
conversations on this work and related issues. This work was partially
supported by an NSF Graduate Research Fellowship.
\end{acknowledgement*}

\section{Sutured manifolds and contact invariants in monopole Floer homology\label{sec:background}}

\subsection{The definition of $SHM$}

For background on monopole Floer homology we refer to \cite{KM-book}.

Let $(M,\gamma)$ be a balanced sutured manifold. Kronheimer and Mrowka
\cite{KM-sutured} defined the monopole Floer homology of $(M,\gamma)$
as follows:
\begin{enumerate}
\item Choose an oriented, connected surface $T$ such that the components
of $\partial T$ are in one-to-one correspondence with the components
of $\gamma$. Form the product sutured manifold $(T\times I,\delta)$,
where $I=[-1,1]$, with annuli $A(\delta)=\partial T\times I$ and
$R_{\pm}(\delta)=T\times\{\pm1\}$.
\item Glue the annuli $A(\delta)$ to $A(\gamma)$ by some orientation-reversing
map $A(\delta)\to A(\gamma)$ sending $\partial R_{+}(\delta)$ to
$\partial R_{+}(\gamma)$. The resulting $3$-manifold should have
boundary $\bar{R}_{+}\cup\bar{R}_{-}$ for some connected, closed,
orientable surfaces $\bar{R}_{\pm}=R_{\pm}(\gamma)\cup R_{\pm}(\delta)$.
\item Form a closed manifold $\bar{Y}$ by gluing the boundary along some
diffeomorphism $h:\bar{R}_{+}\to\bar{R}_{-}$, and let $\bar{R}\subset\bar{Y}$
be the image of $\bar{R}_{\pm}$.
\end{enumerate}
We require that $\bar{R}$ has genus at least $2$, and that $T$
contains a simple closed curve $c$ such that $c\times\{\pm1\}$ is
a non-separating curve in $\bar{R}_{\pm}$.
\begin{defn}
The sutured monopole homology of $(M,\gamma)$ is defined as \[
SHM(M,\gamma)=\Hto_{\bullet}(\bar{Y}|\bar{R}),\]
where $\Hto_{\bullet}(\bar{Y}|\bar{R})$ is the direct sum of $\Hto_{\bullet}(\bar{Y},\mathfrak{s})$
over all $\mathrm{Spin}^{c}$ structures $\mathfrak{s}$ satisfying
$\langle c_{1}(\mathfrak{s}),\bar{R}\rangle=2g(\bar{R})-2$.
\end{defn}
Note that since $g(\bar{R})\geq2$, the class $c_{1}(\mathfrak{s})$
cannot be torsion if $\Hto_{\bullet}(\bar{Y},\mathfrak{s})$ contributes
to $\Hto_{\bullet}(\bar{Y}|\bar{R})$; but then $\overline{HM}(Y,\mathfrak{s})=0$,
so $\Hto_{\bullet}(\bar{Y},\mathfrak{s})$ and $\widehat{HM}_{\bullet}(\bar{Y},\mathfrak{s})$
are canonically isomorphic. In \cite{KM-sutured} the authors therefore
simply write $HM(\bar{Y}|\bar{R})$, but we will prefer to leave the
{}``to'' decoration in place as a reminder that we will be using
the contact invariant associated to $\Hto$.

We can also define $SHM$ using local coefficients. Let $\mathcal{R}$
be a ring with exponential map $\exp:\mathbb{R}\to\mathcal{R}^{\times}$
and write $t^{n}=\exp(n)$ for convenience. To any smooth 1-cycle
$\eta$ in $\bar{Y}$ we can associate a local system $\Gamma_{\eta}$
on the Seiberg-Witten configuration space $\mathcal{B}(\bar{Y},\mathfrak{s})$
whose fiber at any point is $\mathcal{R}$ and which assigns to any
path $z:[0,1]\to\mathcal{B}(\bar{Y},\mathfrak{s})$ the multiplication
map by $t^{r(z)}$, where \[
r(z)=\frac{i}{2\pi}\int_{[0,1]\times\eta}\mathrm{tr}(F_{A_{z}})\]
for $A_{z}$ the connection on $[0,1]\times\bar{Y}$ arising from
the path $z$.

Suppose that the diffeomorphism $h:\bar{R}_{+}\to\bar{R}_{-}$ restricts
to an orientation-preserving homeomorphism $c\times\{1\}\to c\times\{-1\}$,
resulting in a curve $\bar{c}\subset\bar{Y}$. If $\eta$ is taken
to be a curve dual to $\bar{c}$, in the sense that $\bar{c}\cdot\eta=1$,
then we can define $SHM(M,\gamma;\Gamma_{\eta})=\Hto_{\bullet}(\bar{Y}|\bar{R};\Gamma_{\eta})$.
As in the case without local coefficients, if $t-t^{-1}$ is invertible
in $\mathcal{R}$ then the authors simply write $HM(\bar{Y}|\bar{R};\Gamma_{\eta})$
without any ambiguity but we will continue to use $\Hto$.
\begin{prop}[\cite{KM-sutured}]
If $t-t^{-1}$ is invertible in $\mathcal{R}$, then $SHM(M,\gamma;\Gamma_{\eta})$
depends only on $(M,\gamma)$ and $\mathcal{R}$. In this case we
can allow $\bar{R}$ to have genus 1, but if $g(\bar{R})\geq2$ and
$\mathcal{R}$ has no $\mathbb{Z}$-torsion then we also have \[
SHM(M,\gamma;\Gamma_{\eta})\cong SHM(M,\gamma)\otimes\mathcal{R}.\]

\end{prop}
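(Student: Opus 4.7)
The plan is to verify the three assertions in turn, following the outline of \cite{KM-sutured}: (a) independence of $SHM(M,\gamma;\Gamma_{\eta})$ from the auxiliary closure data, (b) extension of this independence to $g(\bar{R})=1$ once $t-t^{-1}$ is invertible, and (c) the identification with $SHM(M,\gamma)\otimes\mathcal{R}$ under the additional torsion-freeness hypothesis on $\mathcal{R}$. For (a), I would adopt the standard closure-comparison strategy: any two closures of $(M,\gamma)$ are connected by a finite sequence of elementary moves---isotopy or modification of the closing diffeomorphism $h$, reparameterization of the gluing $A(\delta)\to A(\gamma)$, and stabilization of the auxiliary surface $T$---each realized by a cobordism $W$ whose induced map on the $\bar{R}$-summand of $\Hto$ is shown to be an isomorphism via Floer's excision principle and the composition laws of \cite{KM-book}. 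The local system is transported along $W$ by choosing an interpolating $2$-chain with boundary $\eta'-\eta$, and naturality of the cobordism maps in $\Gamma$ then yields the isomorphism on $\Hto(\bar{Y}|\bar{R};\Gamma_{\eta})$.

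For (b), the genus $\ge 2$ hypothesis is used in the coefficient-free construction precisely to force $c_{1}(\mathfrak{s})$ non-torsion on every contributing $\mathrm{Spin}^{c}$-structure, so that $\overline{HM}(\bar{Y},\mathfrak{s})=0$ and $\Hto\cong\widehat{HM}$ on the relevant summands. When $g(\bar{R})=1$ the pairing $\langle c_{1}(\mathfrak{s}),\bar{R}\rangle=0$ permits torsion structures and reducibles reappear. The key algebraic input is that the reducible contribution to $\overline{HM}(\bar{Y},\mathfrak{s};\Gamma_{\eta})$ acquires an overall factor of $t-t^{-1}$ from the local-coefficient perturbation of the $U$-action on the reducible locus; inverting $t-t^{-1}$ forces this summand to vanish and restores $\Hto\cong\widehat{HM}$, so the invariance argument of (a) applies without modification.

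For (c), under $g(\bar{R})\ge 2$ and $\mathcal{R}$ torsion-free, the Seiberg--Witten chain complex on each contributing summand is finitely generated, has no reducibles, and the $\Gamma_{\eta}$-twisted differential differs from the ordinary one only by unit weights $t^{r(z)}\in\mathcal{R}^{\times}$ along trajectories. A standard rescaling of the generators by appropriate powers of $t$, permitted by the torsion-freeness of $\mathcal{R}$, trivializes these weights at the chain level and produces a chain isomorphism $CM(\bar{Y}|\bar{R};\Gamma_{\eta})\cong CM(\bar{Y}|\bar{R})\otimes_{\mathbb{Z}}\mathcal{R}$, which gives the stated identification on homology.

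The main obstacle I expect is (b): correctly isolating the reducible contribution to $\overline{HM}(\bar{Y},\mathfrak{s};\Gamma_{\eta})$ and verifying that it carries the factor $t-t^{-1}$ requires a hands-on analysis of the perturbed Seiberg--Witten equations on the reducible locus, and it is precisely here that the algebraic hypothesis on $\mathcal{R}$ does genuine analytic work rather than serving merely as bookkeeping.
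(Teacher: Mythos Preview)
The paper does not give its own proof of this proposition; it is imported wholesale from \cite{KM-sutured}. That said, Section~2.2 of the paper reproves the key ingredients over $\mathbb{Z}/2\mathbb{Z}$ and thereby exposes the actual architecture of the argument, against which your outline can be checked. There is a genuine gap.

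Your part (c) does not work. The twisted differential weights a trajectory $z$ by $t^{r(z)}$, and $r(z)$ depends on the relative homology class of $z$, not just its endpoints: two trajectories between the same generators differing by a closed loop $\gamma$ in $\mathcal{B}(\bar{Y},\mathfrak{s})$ pick up weights that differ by the monodromy of $\Gamma_{\eta}$ around $\gamma$. Since $\eta$ is chosen dual to a nonseparating curve $\bar{c}\subset\bar{R}$, this monodromy is nontrivial, and no rescaling of generators can absorb it; the local system is genuinely nontrivial, and the torsion-freeness of $\mathcal{R}$ is irrelevant to that obstruction. The actual argument (Corollary~3.4 of \cite{KM-sutured}, restated in Section~2.2) is global rather than chain-level: one uses excision to split off a standard piece carrying $\eta$ with known Floer homology $\mathcal{R}$, and then applies the K\"unneth theorem, where the torsion-freeness of $\mathcal{R}$ is exactly what kills the $\mathrm{Tor}$ term.

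Your part (b) also misidentifies the mechanism. In \cite{KM-sutured} the invertibility of $t-t^{-1}$ is not used to annihilate a reducible summand of $\overline{HM}$ by hand; it is the hypothesis under which the genus-one excision theorem holds, and it enters concretely through the computation $\Hto(T^{3};\Gamma_{\eta})\cong\mathcal{R}$ (which fails without that hypothesis). Once genus-one excision is available, the invariance proofs (Corollary~4.8 and Proposition~4.10 of \cite{KM-sutured}, both revisited in Section~2.2) go through unchanged. Your proposed direct analysis of the reducible locus would be a different, and substantially harder, route.
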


\subsection{$SHM$ with coefficients in $\mathbb{Z}/2\mathbb{Z}$}

Throughout \cite{KM-sutured} the authors work with coefficients (both
local and otherwise) in $\mathbb{Z}$; however, we assert that $SHM$
is still an invariant if $\mathbb{F}=\mathbb{Z}/2\mathbb{Z}$ is used
instead. When using systems of local coefficients $\Gamma_{\eta}$
over $\mathbb{F}$, we drop the condition that the ring $\mathcal{R}$
have no $\mathbb{Z}$-torsion and thus only require that $t-t^{-1}\in\mathcal{R}$
be invertible. This will allow us to pursue several applications involving
surgery exact triangles, which are known to work with local coefficients
over $\mathbb{F}$ (see \cite{KMOS-lens} or \cite[Chapter 42]{KM-book})
but which have not yet been proved with local coefficients over $\mathbb{Z}$.

The proofs of the invariance theorems in \cite{KM-sutured}, Theorem
4.4 and Proposition 4.6, rely on several facts, most notably the excision
theorems, Theorems 3.1 through 3.3, which still apply verbatim. We
need to verify that a handful of important proofs still work, and
in each case the only step requiring some additional care is the vanishing
of a $\mathrm{Tor}$ group coming from an application of the Künneth
theorem:
\begin{cor*}[{\cite[Corollary 3.4]{KM-sutured}}]
Let $\Sigma\subset Y$ be a closed, oriented surface, and let $\eta$
be a $1$-cycle supported in $\Sigma$. If every component of $\Sigma$
has genus at least $2$, then \[
\Hto(Y|\Sigma;\Gamma_{\eta})\cong\Hto(Y|\Sigma)\otimes\mathcal{R}.\]
\end{cor*}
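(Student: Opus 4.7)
The plan is to follow the proof of the corresponding statement over $\mathbb{Z}$ given in \cite[Corollary 3.4]{KM-sutured}, and verify that every step adapts cleanly to coefficients in $\mathbb{F}=\mathbb{Z}/2\mathbb{Z}$. The preceding discussion has already isolated the one potentially dangerous ingredient: a K\"unneth-style calculation whose error term over $\mathbb{Z}$ is a $\mathrm{Tor}$ group against $\mathcal{R}$, which \cite{KM-sutured} handles by assuming that $\mathcal{R}$ has no $\mathbb{Z}$-torsion.

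The original argument naturally splits into two essentially independent stages. First, for each Spin$^{c}$ structure $\mathfrak{s}$ with $\langle c_{1}(\mathfrak{s}),\Sigma\rangle=2g(\Sigma)-2$ contributing to $\Hto(Y|\Sigma)$, one shows that the restriction of the local system $\Gamma_{\eta}$ to the configuration space $\mathcal{B}(Y,\mathfrak{s})$ is isomorphic to the trivial $\mathcal{R}$-local system; this uses the hypothesis that $\eta$ is supported in $\Sigma$, together with the Spin$^{c}$ constraint, to trivialize the monodromies around generators of $\pi_{1}(\mathcal{B}(Y,\mathfrak{s}))\cong H^{1}(Y;\mathbb{Z})$. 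Second, once $\Gamma_{\eta}$ is trivialized on each relevant summand, one deduces the claimed isomorphism from a K\"unneth identification of the form $H(C(Y,\mathfrak{s})\otimes_{\mathbb{F}}\mathcal{R})\cong \Hto(Y,\mathfrak{s})\otimes_{\mathbb{F}}\mathcal{R}$, where $C(Y,\mathfrak{s})$ denotes the underlying Seiberg--Witten chain complex.

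The first stage is purely geometric and analytic in nature, takes place in the configuration space of Spin$^{c}$ connections, and makes no reference to the characteristic of the ground ring; it therefore imports verbatim from \cite{KM-sutured}. The second stage is where the coefficient ring enters, and this is precisely where the switch to $\mathbb{F}$ simplifies things: since $\mathbb{F}=\mathbb{Z}/2\mathbb{Z}$ is a field, every $\mathbb{F}$-module, in particular $\mathcal{R}$, is automatically flat, so the relevant $\mathrm{Tor}^{\mathbb{F}}_{*}(\Hto(Y,\mathfrak{s}),\mathcal{R})$ vanishes identically with no hypothesis on $\mathcal{R}$ beyond the invertibility of $t-t^{-1}$ already built into the setup. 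This is exactly the simplification advertised in the remark preceding the statement.

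The main thing to watch is therefore not the Tor calculation itself, which has become trivial, but the bookkeeping question of whether the trivialization of $\Gamma_{\eta}$ on $\mathcal{B}(Y,\mathfrak{s})$ in \cite{KM-sutured} anywhere implicitly invokes a $\mathbb{Z}$-coefficient fact (for instance, to identify $\pi_{1}(\mathcal{B}(Y,\mathfrak{s}))$ with $H^{1}(Y;\mathbb{Z})$, or to evaluate $r(z)$ on loop classes). Since that step concerns only the monodromies of connections and is characteristic-free, no such implicit input should appear, and the corollary will follow with the identical statement and proof modulo substituting $\mathbb{F}$ for $\mathbb{Z}$ throughout.
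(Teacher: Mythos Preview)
Your description of the original argument in \cite{KM-sutured} is not accurate, and the first stage you propose does not actually work. The monodromy of $\Gamma_{\eta}$ around a loop in $\mathcal{B}(Y,\mathfrak{s})$ corresponding to a class $u\in H^{1}(Y;\mathbb{Z})$ is $t^{\langle u,[\eta]\rangle}$; this depends only on the homology class $[\eta]\in H_{1}(Y)$ and not at all on the Spin$^{c}$ structure $\mathfrak{s}$. The extremality condition $\langle c_{1}(\mathfrak{s}),\Sigma\rangle=2g(\Sigma)-2$ therefore cannot help trivialize $\Gamma_{\eta}$, and in typical examples (say $Y=\Sigma\times S^{1}$ with $\eta$ a nonseparating curve in $\Sigma$) the local system is genuinely nontrivial on every $\mathcal{B}(Y,\mathfrak{s})$. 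So the reduction you envision, from $\Gamma_{\eta}$ to the constant local system followed by a universal-coefficient computation, is not available.

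The actual proof in \cite{KM-sutured} is an excision argument: one cuts along $\Sigma$ and applies the excision theorem to compare $\Hto(Y|\Sigma;\Gamma_{\eta})$ with $\Hto(\tilde{Y}|\tilde{\Sigma};\Gamma_{\eta_{0}})$ for a manifold $\tilde{Y}=Y_{1}\sqcup Y_{2}$ in which $\eta_{0}$ lives entirely in $Y_{1}$, and $\Hto(Y_{1}|\Sigma_{1};\Gamma_{\eta_{0}})$ is already known to equal $\mathcal{R}$. The only coefficient-sensitive step is then the K\"unneth map
\[
\Hto(Y_{1}|\Sigma_{1};\Gamma_{\eta_{0}})\otimes_{\mathbb{F}}\Hto(Y_{2}|\Sigma_{2})\longrightarrow\Hto(\tilde{Y}|\tilde{\Sigma};\Gamma_{\eta_{0}}),
\]
whose cokernel is $\mathrm{Tor}_{\mathbb{F}}(\mathcal{R},\Hto(Y_{2}|\Sigma_{2}))$. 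This vanishes because $\mathcal{R}$ is free over $\mathbb{F}$, and the rest of the argument goes through unchanged. Your instinct that the Tor term is the only issue is correct, but it appears inside the excision/K\"unneth step for a disconnected auxiliary manifold, not in a chain-level universal-coefficient identification on $Y$ itself.
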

\begin{proof}
The only detail requiring care in the original proof is the map (14),
denoted \[
HM_{\bullet}(Y_{1}|\Sigma_{1};\Gamma_{\eta_{0}})\otimes HM_{\bullet}(Y_{2}|\Sigma_{2})\to HM_{\bullet}(\tilde{Y}|\tilde{\Sigma};\Gamma_{\eta_{0}}),\]
which comes from an application of the Künneth theorem and is expected
to be an isomorphism. The cokernel of this map is \[
\mathrm{Tor}_{\mathbb{F}}(HM_{\bullet}(Y_{1}|\Sigma_{1};\Gamma_{\eta_{0}}),HM_{\bullet}(Y_{2}|\Sigma_{2})),\]
which is zero since $HM_{\bullet}(Y_{1}|\Sigma_{1};\Gamma_{\eta_{0}})=\mathcal{R}$
is a free $\mathbb{F}$-module, so the rest of the proof still applies.\end{proof}
\begin{lem*}[{\cite[Lemma 4.7]{KM-sutured}}]
Let $Y$ be fibered over $S^{1}$ with closed fiber $R$ of genus
at least $2$. Then $\Hto(Y|R)\cong\mathbb{F}$.\end{lem*}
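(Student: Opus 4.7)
The plan is to follow the proof of Lemma 4.7 in \cite{KM-sutured} essentially verbatim, adapting only the step involving a K\"unneth formula to $\mathbb{F}$-coefficients. That original argument proceeds in two stages: a direct computation for the trivial fibration $Y = S^{1}\times R$, followed by a reduction of the general fibered case to the trivial one via the excision theorem.

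For the base case $Y = S^{1}\times R$, the $\mathbb{Z}$-coefficient computation in \cite{KM-sutured} (building on \cite{KM-book}) shows $\Hto(S^{1}\times R\,|\,R;\mathbb{Z})\cong\mathbb{Z}$, exploiting the symplectic structure on $T^{2}\times R = S^{1}\times(S^{1}\times R)$ and Taubes' identification of Seiberg--Witten with counts of pseudoholomorphic curves in the canonical $\mathrm{Spin}^{c}$ structure. Since this group is free abelian, the universal coefficient theorem immediately yields $\Hto(S^{1}\times R\,|\,R;\mathbb{F})\cong\mathbb{F}$.

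For the general case with monodromy $\phi$, I would invoke the excision theorem (Theorem 3.1 of \cite{KM-sutured}), which the preceding discussion has confirmed applies over $\mathbb{F}$. Cutting $Y\sqcup(S^{1}\times R)$ along a pair of fibers and regluing by swapping yields a new disjoint union of fibered $3$-manifolds whose monodromies are rearrangements of $\phi$ and the identity, and the resulting excision isomorphism combined with a K\"unneth map gives $\Hto(Y\,|\,R;\mathbb{F})\otimes\mathbb{F} \cong \mathbb{F}\otimes\Hto(Y\,|\,R;\mathbb{F})$, so the general case reduces to the trivial one.

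The main obstacle is the K\"unneth step embedded inside the excision argument, which over $\mathbb{Z}$ requires a $\mathrm{Tor}$ term to vanish. Here this presents no difficulty: $\mathbb{F}$ is a field, so every $\mathbb{F}$-module is flat, and the relevant group $\Hto(S^{1}\times R\,|\,R;\mathbb{F}) = \mathbb{F}$ is in particular free. This is precisely the observation used in the $\mathbb{F}$-analogue of Corollary 3.4 just proved above. Beyond tracking this point through the argument, no new ideas are required.
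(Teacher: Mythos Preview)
Your base-case computation via universal coefficients is fine. The reduction step, however, is vacuous as written. If you cut $Y_{\phi}\sqcup(S^{1}\times R)$ each along a single fiber and swap the gluings, the reglued manifold is the connected mapping torus $Y_{\phi\circ\mathrm{id}}=Y_{\phi}$ again; excision together with K\"unneth then reads $\Hto(Y_{\phi}|R)\otimes_{\mathbb{F}}\mathbb{F}\cong\Hto(Y_{\phi}|R)$, which carries no information about the rank of $\Hto(Y_{\phi}|R)$. The displayed isomorphism $\Hto(Y|R)\otimes\mathbb{F}\cong\mathbb{F}\otimes\Hto(Y|R)$ in your sketch is exactly this tautology, and nothing has been reduced to the trivial case.

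The paper (following \cite{KM-sutured}) instead applies excision to $Y_{h}\sqcup Y_{h^{-1}}$. Now the reglued manifold is $Y_{hh^{-1}}=S^{1}\times R$, so the \emph{target} of the K\"unneth-plus-excision map is $\mathbb{F}$: one obtains an injection
\[
\Hto(Y_{h}|R)\otimes_{\mathbb{F}}\Hto(Y_{h^{-1}}|R)\hookrightarrow\mathbb{F}
\]
with cokernel $\mathrm{Tor}_{\mathbb{F}}(\Hto(Y_{h}|R),\Hto(Y_{h^{-1}}|R))=0$. Combined with the isomorphism $\Hto(Y_{h}|R)\cong\Hto(Y_{h^{-1}}|R)$, this gives $\Hto(Y_{h}|R)^{\otimes2}\cong\mathbb{F}$ and hence $\Hto(Y_{h}|R)\cong\mathbb{F}$. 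The missing idea in your sketch is pairing with the inverse monodromy so that the output of excision, rather than the input, is the trivial fibration.
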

\begin{proof}
As before, if $Y_{h}$ is the mapping torus of $h:R\to R$ then $\Hto(Y_{h}|R)\cong\Hto(Y_{h^{-1}}|R)$
and so the excision theorem applied to $Y_{h}\sqcup Y_{h^{-1}}$ gives
an injective map $\Hto(Y_{h}|R)\otimes_{\mathbb{F}}\Hto(Y_{h}|R)\to\mathbb{F}$
with cokernel \[
\mathrm{Tor}_{\mathbb{F}}(\Hto(Y_{h}|R),\Hto(Y_{h}|R)).\]
Since $\Hto(Y_{h}|R)$ is a free $\mathbb{F}$-module, this $\mathrm{Tor}$
term vanishes and the map is an isomorphism, hence $\Hto(Y_{h}|R)\cong\mathbb{F}$.\end{proof}
\begin{cor*}[{\cite[Corollary 4.8]{KM-sutured}}]
The sutured homology group $SHM(M,\gamma)$ does not depend on the
choice of gluing homeomorphism $h$.\end{cor*}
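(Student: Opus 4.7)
The plan is to mimic the original proof of Corollary 4.8 in \cite{KM-sutured}, whose only subtle point under our change of coefficients is once again the vanishing of a $\mathrm{Tor}$ term coming from the K\"unneth formula. The preceding lemma has already supplied the fact that makes this vanishing automatic over $\mathbb{F}$, so the argument should go through with only cosmetic changes.

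More concretely, suppose $h, h': \bar{R}_+ \to \bar{R}_-$ are two choices of gluing diffeomorphism, producing closed manifolds $\bar{Y}$ and $\bar{Y}'$ from the intermediate sutured manifold $M' = M \cup (T\times I)$ with boundary $\bar{R}_+ \sqcup \bar{R}_-$. I would apply the excision theorem to the disjoint union $\bar{Y} \sqcup Y_{h'\circ h^{-1}}$, where $Y_{h'\circ h^{-1}}$ is the mapping torus of $h'\circ h^{-1} : \bar{R}_- \to \bar{R}_-$, cutting along $\bar{R} \subset \bar{Y}$ and along a fiber $R \subset Y_{h'\circ h^{-1}}$. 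Regluing in the swapped way produces $\bar{Y}' \sqcup (\bar{R}\times S^1)$ (the second piece is irrelevant and does not contribute to the relevant $\mathrm{Spin}^c$ summand), and excision yields an injection
\[
\Hto(\bar{Y}|\bar{R};\Gamma) \otimes_{\mathbb{F}} \Hto(Y_{h'\circ h^{-1}}|R) \longrightarrow \Hto(\bar{Y}'|\bar{R};\Gamma)
\]
whose cokernel is $\mathrm{Tor}_{\mathbb{F}}$ of the two tensor factors.

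The previous lemma, proved in our setting, gives $\Hto(Y_{h'\circ h^{-1}}|R) \cong \mathbb{F}$, and in particular this is a free $\mathbb{F}$-module, so the $\mathrm{Tor}$ term vanishes and the map is an isomorphism. This produces the desired identification $\Hto(\bar{Y}|\bar{R}) \cong \Hto(\bar{Y}'|\bar{R})$ as $\mathbb{F}$-modules. The same argument works in the presence of local coefficients $\Gamma_\eta$, provided one takes the curve $\eta$ to sit in the $\bar{Y}$ factor so that the left-hand tensor factor carries the local system and the right-hand factor remains $\mathbb{F}$.

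The only step that requires any thought is the bookkeeping that the cut-and-paste operation on $\bar{Y} \sqcup Y_{h'\circ h^{-1}}$ really does yield $\bar{Y}'$ on the nontrivial component, and that the cut surface in $Y_{h'\circ h^{-1}}$ can be chosen to match the gluing data on $\bar{R}$ in $\bar{Y}$ compatibly. This is exactly the combinatorial argument made in \cite{KM-sutured} and requires no modification for $\mathbb{F}$-coefficients; the genuinely new input in our setting, namely that $\mathrm{Tor}_{\mathbb{F}}$ vanishes because $\Hto(Y_{h'\circ h^{-1}}|R)$ is a free $\mathbb{F}$-module of rank one, has been isolated in the previous lemma.
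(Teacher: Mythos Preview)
Your proposal is correct and follows essentially the same approach as the paper: apply the excision theorem to the disjoint union of $\bar{Y}$ with a mapping torus, invoke the preceding lemma to identify the mapping torus contribution as $\mathbb{F}$, and observe that the relevant $\mathrm{Tor}_{\mathbb{F}}$ term vanishes because $\mathbb{F}$ is free over itself. One small bookkeeping slip: the swapped regluing of $\bar{Y}\sqcup Y_{h'\circ h^{-1}}$ along $\bar{R}\sqcup R$ produces the single connected manifold $\bar{Y}'$, not $\bar{Y}'\sqcup(\bar{R}\times S^1)$; but as you note, this combinatorics is exactly what is carried out in \cite{KM-sutured} and is unaffected by the change of coefficients.
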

\begin{proof}
This is another application of the excision theorem, Theorem 3.1,
to a disconnected manifold $Y=Y_{1}\sqcup Y_{2}$ with $Y_{2}$ a
mapping torus, hence $\Hto(Y_{2}|\Sigma_{2})\cong\mathbb{F}$ and
again the proof is the same once we observe that \[
\mathrm{Tor}_{\mathbb{F}}(\Hto(Y_{1}|\Sigma_{1}),\Hto(Y_{2}|\Sigma_{2}))\cong0.\]
\end{proof}
\begin{prop*}[{\cite[Proposition 4.10]{KM-sutured}}]
If $t-t^{-1}$ is invertible in $\mathcal{R}$, then $SHM(M,\gamma;\Gamma_{\eta})$
is independent of the genus $g$.\end{prop*}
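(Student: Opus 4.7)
The strategy is to prove invariance under increasing the genus by $1$, so that the full statement follows by iteration. Let $\bar Y_g$ and $\bar Y_{g+1}$ be closures of $(M,\gamma)$ constructed from auxiliary pieces $T$ and $T'$ of genera $g$ and $g+1$, with distinguished surfaces $\bar R_g,\bar R_{g+1}$ and dual $1$-cycles $\eta_g,\eta_{g+1}$. By Corollary~4.8 we are free to modify the gluing diffeomorphism, so we may arrange that $T'$ is obtained from $T$ by attaching a single $1$-handle in the interior of $T$, and correspondingly $\bar R_{g+1}$ is obtained from $\bar R_g$ by an internal stabilization that is supported away from the image of $M$.

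The next step is to introduce an auxiliary closed $3$-manifold $Y_0$, namely a mapping torus with closed fiber $\Sigma_0$ of genus $1$ containing a nonseparating curve $c_0$, together with a dual $1$-cycle $\eta_0$. I would then apply the excision theorem (Theorem~3.1 in its $\mathbb F$-coefficient form) to the disjoint union $\bar Y_g\sqcup Y_0$: the excision cuts each summand along a suitable genus-$g$ subsurface and re-glues so that the resulting closed manifold is identified, via Corollary~4.8, with $\bar Y_{g+1}$. Combining excision with the K\"unneth formula over $\mathbb F$ (whose $\mathrm{Tor}$ term vanishes because $\mathcal R$ is a free $\mathbb F$-module, exactly as in the preceding corollaries) gives
\[
\Hto(\bar Y_g|\bar R_g;\Gamma_{\eta_g})\otimes_{\mathcal R}\Hto(Y_0|\Sigma_0;\Gamma_{\eta_0})\;\cong\;\Hto(\bar Y_{g+1}|\bar R_{g+1};\Gamma_{\eta_{g+1}}).
\]
The local-coefficient form of Lemma~4.7 identifies the second tensor factor with $\mathcal R$, so this collapses to $SHM_g(M,\gamma;\Gamma)\cong SHM_{g+1}(M,\gamma;\Gamma)$.

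The main obstacle is arranging the geometric setup of the excision so that the re-glued manifold really is a closure of $(M,\gamma)$ of one-higher genus, with the stabilizing handle absorbed entirely into the auxiliary piece rather than interfering with the embedded $M$. The use of a genus-$1$ fiber in $Y_0$ is the other delicate point: it is precisely here that the hypothesis that $t-t^{-1}$ be a unit in $\mathcal R$ is required, since without local coefficients the mapping-torus lemma fails at genus~$1$ because of reducible Seiberg--Witten solutions. Once these two ingredients are in place, the rest of the argument is a formal excision-plus-K\"unneth manipulation exactly parallel to the corollaries already established.
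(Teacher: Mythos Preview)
Your overall strategy---excision combined with the K\"unneth formula and the mapping-torus lemma---is the same as in \cite[Proposition~4.10]{KM-sutured}, and the paper's proof here is not a fresh argument but only a verification that the one step requiring care over $\mathbb F$-based local coefficients still goes through, namely the vanishing of the K\"unneth $\mathrm{Tor}$ term. The excision geometry is left entirely to \cite{KM-sutured}.

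Two points in your write-up need correction. First, since both factors in the disjoint union carry $\mathcal R$-local coefficients, the K\"unneth tensor product and its $\mathrm{Tor}$ obstruction are taken over $\mathcal R$, not over $\mathbb F$. You correctly display $\otimes_{\mathcal R}$, but then justify the vanishing ``because $\mathcal R$ is a free $\mathbb F$-module, exactly as in the preceding corollaries''; the paper's point is precisely that this case is \emph{not} parallel to the earlier ones, and that here $\mathrm{Tor}_{\mathcal R}(\,\cdot\,,\mathcal R)=0$ because $\mathcal R$ is free as an $\mathcal R$-module. Second, the excision you describe---cutting ``each summand along a suitable genus-$g$ subsurface''---cannot be carried out as stated: your auxiliary $Y_0$ has genus-$1$ fiber and contains no closed genus-$g$ surface, and excision requires the two cut surfaces to be diffeomorphic. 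The excision in the genus-independence argument is along \emph{tori} (this is where the genus-$1$ excision theorem, and hence the invertibility of $t-t^{-1}$, actually enters), not along the $\bar R$-surfaces.
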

\begin{proof}
Here we wish to show that \[
\Hto(Y_{1}|\bar{R}_{1};\Gamma_{\eta_{1}})\cong\Hto((Y_{1}\sqcup Y_{2})|(\bar{R}_{1}\sqcup\bar{R}_{2});\Gamma_{\eta})\]
where $\eta=\eta_{1}+\eta_{2}$ for some cycles $\eta_{i}\subset\bar{R}_{i}\subset Y_{i}$,
and we know that $\Hto(Y_{2}|\bar{R}_{2};\Gamma_{\eta_{2}})=\mathcal{R}$.
The Künneth theorem thus gives a map \[
\Hto(Y_{1}|\bar{R}_{1};\Gamma_{\eta_{1}})\otimes_{\mathcal{R}}\mathcal{R}\to\Hto((Y_{1}\sqcup Y_{2})|(\bar{R}_{1}\sqcup\bar{R}_{2});\Gamma_{\eta})\]
with cokernel \[
\mathrm{Tor}_{\mathcal{R}}(\Hto(Y_{1}|\bar{R}_{1};\Gamma_{\eta_{1}}),\mathcal{R}),\]
and since $\mathcal{R}$ is free as an $\mathcal{R}$-module, the
$\mathrm{Tor}$ term vanishes and this is indeed an isomorphism.
\end{proof}
We conclude that both the standard and local versions of sutured monopole
homology are invariants if we work over $\mathbb{F}$ rather than
$\mathbb{Z}$.

\subsection{Monopole knot homology}

Given a knot $K$ in a closed, oriented 3-manifold $Y$, we can form
a sutured manifold $Y(K)=(M,\gamma)$ as in \cite{Juhasz-sutured}
by taking $M$ to be the knot complement $Y\backslash N(K)$ and $\gamma\subset\partial M$
a pair of oppositely oriented meridians. Then monopole knot homology
is defined by \[
KHM(Y,K)=SHM(M,\gamma),\]
and if we work with local coefficients we get $KHM(Y,K)\otimes\mathcal{R}\cong SHM(M,\gamma;\Gamma_{\eta})$. 

From now on we will fix $\mathcal{R}$ to be the Novikov ring \[
\left\{ \left.\sum_{\alpha}c_{\alpha}t^{\alpha}\right\vert \alpha\in\mathbb{R},\ c_{\alpha}\in\mathbb{F},\ \#\{\beta<n\mid c_{\beta}\not=0\}<\infty\ \mathrm{for\ all}\ n\right\} ,\]
with $\exp(\alpha)=t^{\alpha}$ and $(t-t^{-1})^{-1}=-t-t^{3}-t^{5}-\dots$.
Although we may drop the local system $\Gamma_{\eta}$ from our notation,
we are always working with local coefficients over $\mathcal{R}$.

\subsection{Contact structures in monopole Floer homology}

Let $(Y,\xi)$ be a closed contact 3-manifold. Kronheimer and Mrowka
\cite{KM-contact} associate a contact invariant \[
\psi(\xi):\Lambda(\xi)\to\Hto_{\bullet}(-Y,\mathfrak{s}_{\xi},c_{\mathrm{bal}},\Gamma_{\eta})\]
where $c_{\mathrm{bal}}=2\pi c_{1}(\mathfrak{s}_{\xi})$ is a balanced
perturbation of the Seiberg-Witten equations and $\Lambda(\xi)$ is
the set of orientations of an appropriate moduli space. In general
we will ignore the orientations $\Lambda(\xi)$, since we are working
in characteristic 2, and so we will write $\psi(\xi)\in\Hto_{\bullet}(-Y,\mathfrak{s}_{\xi},c_{\mathrm{bal}},\Gamma_{\eta})$.

Mrowka and Rollin \cite{Mrowka-knots,Mrowka-cobordisms} investigated
the behavior of the contact invariant under symplectic cobordisms.
\begin{defn}
A symplectic cobordism $(W,\omega)$ from $(Y_{-},\xi_{-})$ to $(Y_{+},\xi_{+})$
is said to be \emph{left-exact} if $\omega$ is exact near $Y_{-}$,
or equivalently if it is given in a collar neighborhood of $Y_{-}$
by a symplectization $\frac{1}{2}d(t^{2}\eta_{-})$ where $\xi_{-}=\ker\eta_{-}$.
It is \emph{right-exact} if the same holds near $Y_{+}$, and \emph{boundary-exact}
if it is both left- and right-exact.\end{defn}
\begin{thm}[{\cite[Theorem 3.5.4]{Mrowka-cobordisms}}]
\label{thm:symplectic-cobordism}Let $W$ be a boundary-exact cobordism
$(W,\omega)$ as above such that the map \[
H^{1}(W;\mathbb{Z})\to H^{1}(Y_{+};\mathbb{Z})\]
is surjective, and let $W^{\dagger}$ denote $W$ viewed as a cobordism
from $-Y_{+}$ to $-Y_{-}$. Then $\psi(\xi_{-})=\Hto(W^{\dagger},\mathfrak{s}_{\omega})(\psi(\xi_{+}))$.\end{thm}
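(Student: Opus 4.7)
The plan is to establish functoriality of the contact invariant via a direct gauge-theoretic construction, following Taubes' approach to Seiberg-Witten theory on symplectic $4$-manifolds adapted to manifolds with contact boundary. The strategy is to build the cobordism map by counting solutions of a symplectically perturbed Seiberg-Witten moduli space on $W^\dagger$ with attached cylindrical ends, identify $\psi(\xi_\pm)$ with canonical near-reducible solutions over the ends, and observe that these canonical solutions extend globally thanks to the symplectic structure on $W$.

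First, one uses $\omega$ to equip $W$ with a compatible almost-K\"ahler structure and the canonical $\mathrm{Spin}^c$-structure $\mathfrak{s}_\omega$ satisfying $c_1(\mathfrak{s}_\omega) = -c_1(W,\omega)$. The boundary-exactness hypothesis guarantees that in a collar neighborhood of each $Y_\pm$ the symplectic form agrees with $\frac{1}{2}d(t^2 \eta_\pm)$, so $\mathfrak{s}_\omega$ restricts to $\mathfrak{s}_{\xi_\pm}$ and the natural symplectic perturbation on $W$ matches the balanced perturbation $c_{\mathrm{bal}} = 2\pi c_1(\mathfrak{s}_{\xi_\pm})$ used to define $\psi(\xi_\pm)$. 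Reversing orientation, one views $W$ as the cobordism $W^\dagger \colon -Y_+ \to -Y_-$, attaches cylindrical ends, and defines $\Hto(W^\dagger, \mathfrak{s}_\omega)$ by counting isolated solutions to the perturbed Seiberg-Witten equations on the resulting complete manifold with prescribed asymptotic behavior at the ends.

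The key observation is that $\omega$ supplies a distinguished nearly-holomorphic Seiberg-Witten configuration on $W^\dagger$, and by construction its restriction to each cylindrical end is the canonical solution whose image in Floer homology represents $\psi(\xi_\pm)$. Counting this configuration therefore yields a matrix element of the cobordism map carrying $\psi(\xi_+)$ to $\psi(\xi_-)$. It then remains to show that after suitable scaling of the perturbation, no unwanted contributions arise: Taubes-type $L^2$ estimates, valid precisely in the boundary-exact regime, force all solutions to concentrate near the canonical one for sufficiently large perturbation parameter, so the cobordism map sends $\psi(\xi_+)$ exactly to $\psi(\xi_-)$.

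The main obstacle is establishing the required compactness, transversality, and a priori estimates on the non-compact $\hat{W}^\dagger$, together with a rigorous matching of the interior symplectic perturbation with the balanced perturbation used on the ends. This is where the hypotheses enter: boundary-exactness is what makes the symplectization collar agree with the data used to define $\psi(\xi_\pm)$ and what yields the Taubes-type concentration estimates, while the surjectivity of $H^1(W;\mathbb{Z}) \to H^1(Y_+;\mathbb{Z})$ ensures that the perturbation one-form classes on the end extend appropriately over $W$ and rules out contributions from $\mathrm{Spin}^c$-structures on $W$ that restrict to $\mathfrak{s}_{\xi_+}$ on the outgoing end but differ from $\mathfrak{s}_\omega$ globally.
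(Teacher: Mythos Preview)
The paper does not prove this theorem at all: it is quoted verbatim as \cite[Theorem 3.5.4]{Mrowka-cobordisms} and used as a black box throughout. So there is no ``paper's own proof'' to compare against; the correct thing to do here is simply to cite Mrowka--Rollin, as the author does.

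That said, your outline is a fair high-level description of the Mrowka--Rollin strategy. The ingredients you name---the canonical almost-K\"ahler $\mathrm{Spin}^c$ structure $\mathfrak{s}_\omega$, the identification of $\psi(\xi_\pm)$ with the distinguished near-constant solution on the symplectization ends, the Taubes-type large-perturbation estimates forcing solutions to concentrate on the canonical configuration, and the role of boundary-exactness in matching the interior perturbation to the balanced perturbation $c_{\mathrm{bal}}$ on the ends---are indeed the core of their argument. Your explanation of why the surjectivity hypothesis on $H^1(W;\mathbb{Z})\to H^1(Y_+;\mathbb{Z})$ is needed is also in the right spirit.

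However, what you have written is a roadmap, not a proof. Each of the phrases ``Taubes-type $L^2$ estimates,'' ``no unwanted contributions arise,'' and ``compactness, transversality, and a priori estimates'' stands in for dozens of pages of hard analysis in \cite{Mrowka-knots,Mrowka-cobordisms}, and none of it is reproducible from your sketch alone. For the purposes of this paper, the appropriate action is exactly what the author did: state the theorem with attribution and move on.
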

\begin{cor}
\label{cor:surgery-cobordism}If $W$ is a symplectic 2-handle cobordism
corresponding to Legendrian surgery, then \[
\psi(\xi_{-})=\Hto(W^{\dagger})\psi(\xi_{+})\]
and in particular we can replace the map $\Hto(W^{\dagger},\mathfrak{s}_{\omega})$
of Theorem \ref{thm:symplectic-cobordism} by the total map $\Hto(W^{\dagger})$.
\end{cor}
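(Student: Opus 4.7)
The cobordism $W$ coming from contact $(-1)$-surgery along a Legendrian knot $\mathcal{S} \subset (Y_+,\xi_+)$ admits, by the Weinstein--Eliashberg construction, a Stein structure extending the contact structures $\xi_\pm$ on its boundary. Its symplectic form $\omega = d\lambda$ is globally exact, so in particular $W$ is boundary-exact, and the boundary hypothesis of Theorem~\ref{thm:symplectic-cobordism} holds automatically.

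To apply Theorem~\ref{thm:symplectic-cobordism}, I would verify the surjectivity of $H^1(W;\mathbb{Z}) \to H^1(Y_+;\mathbb{Z})$. Since $W$ is obtained from $Y_+\times I$ by attaching a single 2-handle, the relative cohomology $H^*(W,Y_+)$ is $\mathbb{Z}$ in degree $2$ and zero otherwise, so the long exact sequence of the pair reduces to
\[
0 \to H^1(W) \to H^1(Y_+) \xrightarrow{\delta} \mathbb{Z},
\]
with $\delta(\alpha) = \langle \alpha, [\mathcal{S}]\rangle$. Surjectivity is thus equivalent to $[\mathcal{S}]$ being torsion in $H_1(Y_+;\mathbb{Z})$, which I would confirm in the cases of interest (e.g., rational homology spheres, or nullhomologous attaching circles); the general case will require a refinement of the Mrowka--Rollin argument. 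Granting this, Theorem~\ref{thm:symplectic-cobordism} gives $\psi(\xi_-) = \Hto(W^\dagger,\mathfrak{s}_\omega)(\psi(\xi_+))$.

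The remaining step is to show that replacing $\Hto(W^\dagger,\mathfrak{s}_\omega)$ by the total map $\Hto(W^\dagger) = \sum_\mathfrak{s} \Hto(W^\dagger,\mathfrak{s})$ leaves the equation unchanged. Since $\psi(\xi_+)$ is supported in the summand $\Hto(-Y_+,\mathfrak{s}_{\xi_+})$, only those spin${}^c$ structures on $W^\dagger$ whose restriction to $-Y_+$ is $\mathfrak{s}_{\xi_+}$ can possibly contribute, and these form an affine $\mathbb{Z}$-torsor parametrized by the pairing of $c_1(\mathfrak{s})$ with a generator of $H^2(W,\partial W)$. I would argue that for $\mathfrak{s}\neq\mathfrak{s}_\omega$ the image $\Hto(W^\dagger,\mathfrak{s})\psi(\xi_+)$ vanishes: the formal dimension of the Seiberg--Witten moduli space on $W^\dagger$ varies quadratically in this affine parameter, so the grading shifts of the various $\Hto(W^\dagger,\mathfrak{s})$ are all distinct, and a grading count pins the one potentially non-zero term to $\mathfrak{s}_\omega$.

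The main obstacle is precisely this final vanishing. A grading count alone only shows uniqueness of the term that can land in the component of $\Hto(-Y_-)$ containing $\psi(\xi_-)$; one also has to rule out contributions sitting in other spin${}^c$ summands or gradings of $\Hto(-Y_-)$. I expect to close the gap using the symplectic rigidity already present in the proof of Theorem~\ref{thm:symplectic-cobordism}: Taubes-type compactness for the perturbed Seiberg--Witten equations on the symplectization ends forces $\mathfrak{s}_\omega$ to be the unique spin${}^c$ structure on $W$ that can support solutions compatible with the contact invariant $\psi(\xi_+)$, so the sum defining $\Hto(W^\dagger)\psi(\xi_+)$ collapses to the single term $\Hto(W^\dagger,\mathfrak{s}_\omega)\psi(\xi_+) = \psi(\xi_-)$.
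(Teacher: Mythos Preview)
The paper gives no proof of this corollary; it is recorded immediately after Theorem~\ref{thm:symplectic-cobordism} as a known consequence of the Mrowka--Rollin work cited there and then used as a black box throughout. So there is no argument in the paper to compare against, and the question is whether your route actually closes.

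Your attempt to derive the statement from Theorem~\ref{thm:symplectic-cobordism} alone is honest about its gaps, and those gaps are real. A small slip first: the Legendrian attaching circle $\mathcal{S}$ lives in $Y_-$, not $Y_+$, since a Weinstein $2$-handle is attached along a Legendrian in the concave end; the class in $H_1(Y_+)$ that governs your connecting map $\delta$ is that of the belt sphere (the boundary of the cocore), not of $\mathcal{S}$ itself. More seriously, the surjectivity of $H^1(W)\to H^1(Y_+)$ can genuinely fail for a single Weinstein $2$-handle. For example, if $Y_-=S^3$ and the Legendrian knot has $tb=1$, then topologically $Y_+$ is $0$-surgery on a knot in $S^3$, so $H^1(Y_+)\cong\mathbb{Z}$ while $H^1(W)=0$. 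Thus the corollary is not a formal consequence of Theorem~\ref{thm:symplectic-cobordism} as stated, and your ``granting this'' is not innocuous.

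What the cited references supply is a direct argument specific to Stein (Weinstein) cobordisms that bypasses the $H^1$ hypothesis entirely: one analyzes the perturbed Seiberg--Witten equations on the Stein cobordism and uses the almost-K\"ahler structure to force the canonical spin$^c$ structure $\mathfrak{s}_\omega$ to be the only one supporting solutions asymptotic to the contact invariant. Your final paragraph gestures at exactly this mechanism, and that is the correct route --- but it is an independent theorem from \cite{Mrowka-knots,Mrowka-cobordisms}, not something recoverable from Theorem~\ref{thm:symplectic-cobordism} plus a grading count. The grading argument you sketch in the penultimate paragraph, as you yourself note, only pins down which summand of $\Hto(-Y_-)$ a given $\Hto(W^\dagger,\mathfrak{s})\psi(\xi_+)$ lands in; it does not by itself show vanishing.
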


\section{The Legendrian knot invariant\label{sec:Legendrian-invariant}}

Let $\mathcal{K}\subset(Y,\xi)$ be a Legendrian knot of topological
knot type $K$. Our goal is to construct an appropriate contact structure
$\bar{\xi}$ on a closure $(\bar{Y},\bar{R})$ of the sutured knot
complement $Y(K)$ so that the contact invariant $\psi(\bar{\xi})$
does not depend on any of the choices we must make. This will give
us an invariant $\ell(\mathcal{K})$ of the Legendrian knot $\mathcal{K}$
which is an element of $KHM(-Y,K)$ up to automorphism.

Take a standard neighborhood $N(\mathcal{K})$ whose boundary is a
convex torus. If we assign coordinates to $\partial N(\mathcal{K})\cong\mathbb{R}^{2}/\mathbb{Z}^{2}$
so that $(\pm1,0)$ is a meridian and $(0,\pm1)$ a longitude, then
its dividing set $\Gamma$ consists of two parallel curves of slope
$\frac{1}{tb(\mathcal{K})}$, where $tb(\mathcal{K})$ is the Thurston-Bennequin
invariant of $\mathcal{K}$. (See for example \cite[Section 2.4]{Etnyre-nonexistence}.)
In particular, if we view the sutured knot complement $Y(\mathcal{K})$
as the contact manifold $(Y\backslash N(\mathcal{K}),\xi|_{Y\backslash N(\mathcal{K})})$
with convex boundary, then each of the meridional sutures will intersect
each dividing curve transversely in a single point as in Figure \ref{fig:sutures-and-dividing-curves}.
Here, and in all other figures, we will color regions white and gray
to represent the positive and negative regions, respectively, of a
convex surface.

\begin{figure}
\begin{centering}
\includegraphics{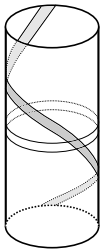}
\par\end{centering}

\caption{The convex torus $\partial(Y\backslash N(\mathcal{K}))$, cut along
a meridian. The horizontal circles are sutures, while the pair of
parallel arcs (or circles, once the top and bottom are identified)
are dividing curves and have slope $\frac{1}{tb(\mathcal{K})}$.\label{fig:sutures-and-dividing-curves}}
\end{figure}

\subsection{\label{sub:construct-closure}Closure of the sutured knot complement}

Our construction follows the definition of the sutured invariant as
in Section \ref{sec:background}. We must first pick an auxiliary
surface $T$ whose boundary components are in one-to-one correspondence
with the sutures of $Y(\mathcal{K})$ and glue the annuli $\partial T\times I\subset T\times I$
to neighborhoods $A(\gamma)$ of the sutures. In order to form a contact
structure on this glued manifold, we must assign a contact structure
to $T\times I$ whose restriction to a neighborhood of $\partial T\times I$
agrees with $\xi$ in a neighborhood of $A(\gamma)$. By Giroux's
flexibility theorem \cite{Giroux-convexite} it suffices to ensure
that the dividing curves match, sending the positive region of $A(\gamma)$
to the negative region of $\partial T\times I$ and vice versa.

Let $T_{0}$ be a closed surface of genus at least 2, and pick a pair
of dual curves $\alpha,\beta\subset T_{0}$ such that $\alpha\cdot\beta=1$.
Give $T_{0}\times I$ the $I$-invariant contact structure $\Xi_{\alpha}$
whose dividing curves consist of two parallel disjoint copies of $\alpha$
on each surface $T_{0}\times\{\pm1\}$ and for which the negative
region of $T_{0}\times\{+1\}$ is an annulus. We define $(T\times I,\Xi)$
as the contact manifold obtained by cutting $T_{0}\times I$ along
a convex perturbation of the annulus $\beta\times I$, as in Figure
\ref{fig:contact-surface}; it may also be viewed as a product sutured
manifold with sutures $\delta=\partial T\times\{0\}$.

\begin{figure}
\begin{centering}
\includegraphics[scale=0.95]{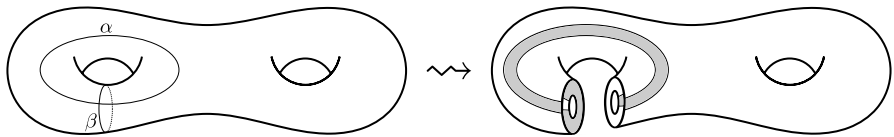}
\par\end{centering}

\caption{The construction of the auxiliary surface $(T\times I,\Xi)$.\label{fig:contact-surface}}

\end{figure}

We now form a contact manifold $(Y',\xi')=(Y\backslash N(\mathcal{K}))\cup(T\times I)$
by gluing along some orientation-reversing diffeomorphism $A(\delta)\to A(\gamma)$
as described above. This manifold has edges, corresponding to the
corners $\partial T\times\partial I$, which we smooth using edge-rounding
\cite{Honda-tight1}, under which dividing curves turn to the left
(as viewed from outside $Y'$) as they approach an edge. See Figure
\ref{fig:edge-rounding}.

\begin{figure}
\begin{centering}
\includegraphics{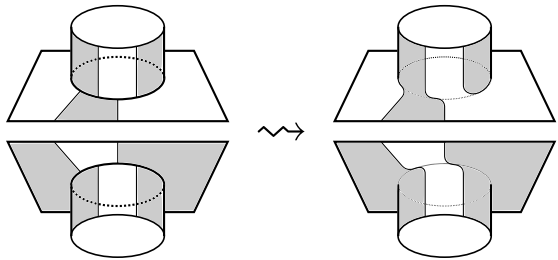}
\par\end{centering}

\caption{Gluing $T\times I$ to $Y\backslash N(\mathcal{K})$ and rounding
edges in a cylindrical neighborhood of one of the sutures on $\partial N(\mathcal{K})$,
as viewed from $T\times\{1\}$ on top and $T\times\{-1\}$ on the
bottom.\label{fig:edge-rounding}}

\end{figure}

\begin{lem}
The contact manifold $Y'=(Y\backslash N(\mathcal{K}))\cup(T\times I)$
depends only on $\mathcal{K}$, $(Y,\xi)$, and the genus of $T_{0}$.\end{lem}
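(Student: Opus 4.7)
The plan is to track each auxiliary choice made in the construction and verify, using a different technique for each, that it does not change the contactomorphism type of $(Y',\xi')$. The choices are: the ordered pair of dual curves $\alpha,\beta\subset T_{0}$ with $\alpha\cdot\beta=1$; the $I$-invariant contact structure $\Xi_{\alpha}$ on $T_{0}\times I$; the convex perturbation of $\beta\times I$ used to define $(T\times I,\Xi)$; and the orientation-reversing gluing diffeomorphism $A(\delta)\to A(\gamma)$ together with the edge-rounding.

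The first three choices can be handled by standard surface and convex-surface theory. Once the genus $g=g(T_{0})$ is fixed, the change-of-coordinates principle tells us that the mapping class group of $T_{0}$ acts transitively on ordered pairs of simple closed curves of algebraic intersection one, so any two choices of $(\alpha,\beta)$ are related by an orientation-preserving diffeomorphism $\phi$ of $T_{0}$, and $\phi\times\mathrm{id}$ identifies the corresponding $I$-invariant contact manifolds. For a fixed $(T_{0},\alpha)$, the $I$-invariant contact structure with dividing set two parallel copies of $\alpha$ and with the prescribed positive region on $T_{0}\times\{+1\}$ is unique up to contact isotopy, by the classification of $I$-invariant contact structures in terms of their dividing data. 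Finally, any two convex perturbations of $\beta\times I$ are $C^{\infty}$-close and hence contact-isotopic, so the cut manifold $(T\times I,\Xi)$ is well-defined up to contactomorphism.

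The substantive part of the argument is the gluing step. The annuli $A(\delta)$ and $A(\gamma)$ each carry sutures (their cores) and dividing sets inherited from the ambient convex boundary structures, and the gluing is required to reverse orientation, match sutures, and identify the positive region of one side with the negative region of the other. My strategy is: (i) apply Giroux flexibility to put the contact structures on collar neighborhoods of $A(\delta)$ and $A(\gamma)$ into a common standard $I$-invariant model, once the dividing curves have been matched; (ii) observe that two admissible gluings then differ by a self-diffeomorphism of the glued annulus fixing sutures and preserving dividing curves, which up to isotopy is a product of Dehn twists along the core; (iii) absorb such a Dehn twist using the standard fact that it extends to a contactomorphism of an $I$-invariant collar of the annulus, which can then be pushed into either side of the gluing.

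The main obstacle is step (iii), absorbing the Dehn-twist ambiguity. One must verify that the collar contactomorphism is compatible with the edge-rounding procedure depicted in Figure~\ref{fig:edge-rounding}, so that the resulting dividing curves on $\partial Y'$ do not record the twist, and that the absorbing contactomorphism can be chosen to lie in the identity component. This is routine convex-surface bookkeeping, but it is the one place in the argument where care is needed; the remaining steps then combine to give that $(Y',\xi')$ is independent of all auxiliary choices up to contactomorphism.
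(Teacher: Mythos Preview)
Your treatment of the first three choices is correct and in fact more careful than the paper's proof, which only explicitly addresses the $(\alpha,\beta)$ choice via the change-of-coordinates principle and leaves the rest to the Giroux flexibility remark preceding the lemma.

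However, your analysis of the gluing ambiguity in steps (ii)--(iii) is off. The dividing set on each annulus $A(\gamma)$ and $A(\delta)$ consists of a pair of arcs running from one boundary circle to the other, crossing the core suture transversely once; this is visible in Figures~\ref{fig:sutures-and-dividing-curves} and~\ref{fig:edge-rounding} and follows from the fact that the dividing curves on $\partial N(\mathcal{K})$ meet each meridional suture exactly once. A Dehn twist along the core of such an annulus does \emph{not} preserve these arcs, so your claim in (ii) that a self-diffeomorphism preserving both sutures and dividing curves is a product of core Dehn twists is internally inconsistent unless the product is trivial. For the same reason, the ``absorption'' in (iii) could not be carried out: a nontrivial core Dehn twist changes the isotopy class of the dividing set and hence cannot be realized by a contactomorphism of an $I$-invariant collar.

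The good news is that this makes the situation simpler, not harder. Once the gluing is required to match the dividing arcs with the correct signs (which is exactly what your step (i) and the paper's appeal to Giroux flexibility arrange), the arcs cut each annulus into disks and pin down the gluing up to isotopy. So there is no residual Dehn-twist ambiguity to absorb, and steps (ii)--(iii) can be replaced by this single observation. With that correction your argument is complete and, in its treatment of the auxiliary choices on $T_0\times I$, more explicit than the paper's two-line proof.
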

\begin{proof}
The construction of $T\times I$ depends only on $g(T_{0})$ and on
the curves $\alpha,\beta\subset T_{0}$. Given any other pair of curves
$\alpha'$ and $\beta'$ which intersect once, there is a diffeomorphism
$\varphi:T_{0}\to T_{0}$ with $\varphi(\alpha)=\alpha'$ and $\varphi(\beta)=\varphi(\beta')$,
and this extends to a contactomorphism $\varphi\times Id:(T_{0}\backslash N(\beta))\times I\to(T_{0}\backslash N(\beta'))\times I$.
\end{proof}
Finally, we close up $Y'$ to get a contact manifold $(\bar{Y},\bar{\xi})$
with distinguished convex surface $\bar{R}$ of genus $g\geq2$. The
boundary of $Y'$ consists of two convex surfaces $\bar{R}_{+}$ and
$\bar{R}_{-}$ determined by $T\times\{\pm1\}\subset\bar{R}_{\pm}$.
These are split by pairs of parallel dividing curves $\Gamma_{\pm}\subset\bar{R}_{\pm}$
into positive and negative regions $(\bar{R}_{+})_{\pm}\subset\bar{R}_{+}$
and $(\bar{R}_{-})_{\pm}\subset\bar{R}_{-}$, and each of $(\bar{R}_{+})_{-}$
and $(\bar{R}_{-})_{+}$ is an annulus. Fix any diffeomorphism $h:\bar{R}_{+}\to\bar{R}_{-}$
which sends $(\bar{R}_{+})_{\pm}$ to $(\bar{R}_{-})_{\mp}$, and
hence also $\Gamma_{+}$ to $\Gamma_{-}$, and such that $h(x\times\{1\})$
and $x\times\{-1\}$ lie in the same component of $\Gamma_{-}$ for
any $x\times\{1\}$ in $\Gamma_{+}\cap(\mathrm{int}(T)\times\{1\})$.
In other words, a dividing curve $c\subset\Gamma_{+}$ corresponds
to one of the two copies of $\alpha\subset T_{0}$, and we require
$h(c)$ to be the dividing curve of $\Gamma_{-}$ corresponding to
the same copy of $\alpha$.

We glue $\bar{R}_{+}$ to $\bar{R}_{-}$ via $h$. The resulting contact
manifold is the desired $(\bar{Y},\bar{\xi})$.
\begin{defn}
The Legendrian invariant of $\mathcal{K}$ is defined as $\ell(\mathcal{K})=\psi(\bar{Y},\bar{\xi})\in\Hto(-\bar{Y},\mathfrak{s}_{\bar{\xi}};\Gamma_{\eta})$.
\end{defn}
We can compute that\[
\langle c_{1}(\bar{\xi}),\bar{R}\rangle_{\bar{Y}}=\chi((\bar{R}_{+})_{+})-\chi((\bar{R}_{+})_{-})=2-2g(\bar{R})\]
and so $\langle c_{1}(\mathfrak{s}_{\bar{\xi}}),\bar{R}\rangle_{-\bar{Y}}=2g(\bar{R})-2$.
This means that $\ell(\mathcal{K})$ is in fact an element of $\Hto(-\bar{Y}|\bar{R};\Gamma_{\eta})=SHM(-Y(K);\Gamma_{\eta})$,
which is by definition the knot homology with local coefficients.
Therefore \[
\ell(\mathcal{K})\in KHM(-Y,K)\otimes\mathcal{R}.\]

\begin{rem}
The desire to arrange that $\langle c_{1}(\bar{\xi}),\bar{R}\rangle=2-2g$
motivated our choice of contact structure on $T\times I$. In particular,
$\chi((\bar{R}_{+})_{+})$ and $\chi((\bar{R}_{+})_{-})$ sum to $\chi(\bar{R}_{+})=2-2g$,
and if we fix their difference as above then we must have $\chi((\bar{R}_{+})_{-})=0$.
But now $(\bar{R}_{+})_{-}$ does not have any sphere or torus components,
and if it had disk components then $\bar{R}_{+}$ would not have a
tight neighborhood \cite{Giroux-convexite}, so $(\bar{R}_{+})_{-}$
is forced to be a union of annuli.
\end{rem}
In addition to the Legendrian knot $\mathcal{K}\subset(Y,\xi)$, the
construction of $\ell(\mathcal{K})$ from a closure $(\bar{Y},\bar{\xi})$
with distinguished convex surface $\bar{R}$ potentially depends on
both the genus $g=g(\bar{R})$ and the choice of diffeomorphism $\bar{R}_{+}\to\bar{R}_{-}$.
Our goal in the next section is to prove that in fact it is independent
of the diffeomorphism.

\subsection{Invariance under diffeomorphism}

In this section we establish that $\ell(\mathcal{K})$ is independent
of the choice of diffeomorphism $\bar{R}_{+}\to\bar{R}_{-}$.
\begin{prop}
\label{pro:diffeo-invariance}Let $(\bar{Y}',\bar{\xi}')$ be the
contact manifold obtained from $\bar{Y}$ by cutting along the convex
surface $\bar{R}$ and regluing along some orientation-preserving
diffeomorphism $h$ such that $h(\gamma)=\gamma$ for each dividing
curve $\gamma$ of $\bar{R}$. Then there is an isomorphism $\Hto(-\bar{Y}'|\bar{R};\Gamma_{\eta})\to\Hto(-\bar{Y}|\bar{R};\Gamma_{\eta})$
which sends $\psi(\bar{Y}',\bar{\xi}')$ to $\psi(\bar{Y},\bar{\xi})$.\end{prop}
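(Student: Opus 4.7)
My plan is to decompose the gluing diffeomorphism $h$ into generators of the relevant mapping class group and analyze each piece in turn. Because $h$ preserves each component of the dividing set $\Gamma \subset \bar{R}$ setwise, the group it represents is generated by diffeomorphisms isotopic to the identity together with Dehn twists $\tau_c$ along simple closed curves $c$ that are either (i) isotopic to a component of $\Gamma$, or (ii) contained entirely within one component of $\bar{R}\setminus\Gamma$. Isomorphisms sending $\psi$ to $\psi$ compose, so it suffices to prove the proposition when $h = \tau_c$ for a single such curve; the isotopy case is immediate because the cut-and-reglue produces a manifold contactomorphic to $(\bar{Y},\bar{\xi})$.

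In case (i), a Dehn twist along a curve parallel to a component of $\Gamma$ can be implemented by a contact isotopy inside an $I$-invariant collar of $\bar{R}$ in $(\bar{Y},\bar{\xi})$ (essentially a full rotation across the collar), so $(\bar{Y}',\bar{\xi}')$ is again contactomorphic to $(\bar{Y},\bar{\xi})$ and the induced isomorphism on $\Hto$ sends $\psi$ to $\psi$ by naturality. In case (ii), I would produce a Legendrian realization $\mathcal{L}$ of $c$ on a convex push-off of $\bar{R}$ into the component of $\bar{Y}\setminus\bar{R}$ on the appropriate side. Standard convex surface theory, in the spirit of Honda's bypass and contact handle calculus, then identifies $(\bar{Y}',\bar{\xi}')$ with the result of contact $(-1)$-surgery (Legendrian surgery) on $\mathcal{L}$. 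By Corollary \ref{cor:surgery-cobordism}, the associated symplectic $2$-handle cobordism $W$ gives a map $\Hto(W^{\dagger})$ sending $\psi(\bar{\xi}')$ to $\psi(\bar{\xi})$, and restriction to the top $\mathrm{Spin}^{c}$-summands produces the candidate map $\Hto(-\bar{Y}'|\bar{R};\Gamma_{\eta}) \to \Hto(-\bar{Y}|\bar{R};\Gamma_{\eta})$.

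The main obstacle will be upgrading this cobordism map to a genuine isomorphism. My plan is to run the construction in reverse: the inverse Dehn twist $\tau_c^{-1}$ is likewise realizable by a Legendrian surgery on a dual knot $\mathcal{L}' \subset (\bar{Y}',\bar{\xi}')$, producing a cobordism $W'$ going back to $\bar{Y}$. Stacking yields a $4$-manifold built from $\bar{Y}\times I$ by attaching two $2$-handles along curves that can be arranged to cobound an embedded annulus, i.e.\ a canceling pair. After handle cancellation the composite is diffeomorphic rel boundary to the product $\bar{Y}\times I$, and since the product cobordism induces the identity on $\Hto$, the composition $\Hto({W'}^{\dagger})\circ\Hto(W^{\dagger})$ is the identity on the relevant summand; the symmetric composition is handled analogously. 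Any subtlety in verifying that the two attaching circles really can be isotoped into a canceling pair (rather than merely being topologically trivial after stabilization) is where I expect to spend the most care, and is the point where the explicit convex-surface description of $\mathcal{L}$ and $\mathcal{L}'$ as push-offs of the same curve $c$ from opposite sides of $\bar{R}$ becomes essential.
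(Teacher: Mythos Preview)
Your decomposition into Dehn twists and the use of Legendrian realization plus Corollary~\ref{cor:surgery-cobordism} to produce the cobordism map are essentially what the paper does in Lemma~\ref{lem:dehn-twist}. The genuine gap is in your argument that this map is an isomorphism. Stacking the two Legendrian $2$-handle cobordisms $W$ and $W'$ does \emph{not} yield a product: the composite $W'\circ W$ is built from $\bar{Y}\times I$ by attaching two $2$-handles, so it has Euler characteristic $2$, whereas $\bar{Y}\times I$ has Euler characteristic $0$. There is no handle cancellation between a pair of $2$-handles, regardless of whether their attaching circles cobound an annulus; handle cancellation requires handles of adjacent indices. Consequently you cannot conclude that $\Hto({W'}^{\dagger})\circ\Hto(W^{\dagger})$ is the identity, and in fact for a generic pair of $2$-handle attachments it will not be.

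The paper's fix is to use the surgery exact triangle instead. Restricting to nonseparating curves $c\subset\bar{R}\setminus\Gamma$ (which suffices to generate the mapping class groups of both components of $\bar{R}\setminus\Gamma$, including the annulus, since the core of that annulus is itself nonseparating in $\bar{R}$), the third vertex of the triangle for the $2$-handle attachment is $\Hto(-\bar{Y}_0|\bar{R};\Gamma_\eta)$, where $\bar{Y}_0$ is $0$-surgery on $c$ with respect to the surface framing. Because $c$ is nonseparating in $\bar{R}$, the surface $\bar{R}$ becomes compressible in $\bar{Y}_0$, so the adjunction inequality forces this group to vanish; exactness then makes $\Hto(W^{\dagger})$ an isomorphism on the top summand. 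This also explains why you must reduce to nonseparating $c$ and check the nonisolating condition for Legendrian realization, both of which your case~(ii) glosses over. Your case~(i) treatment via ``contact isotopy in an $I$-invariant collar'' is not obviously correct either and is unnecessary once you observe that curves parallel to $\Gamma$ are nonseparating and hence already covered by the lemma.
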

\begin{lem}
\label{lem:dehn-twist}Proposition \ref{pro:diffeo-invariance} holds
when $h$ is a Dehn twist along some nonseparating curve $c$ which
does not intersect the dividing curves $\Gamma$ of $\bar{R}$.\end{lem}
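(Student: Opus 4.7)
The plan is to realize the regluing by $\tau_c$ as Legendrian surgery on a Legendrian representative of $c$ in $(\bar{Y},\bar{\xi})$, apply the Mrowka--Rollin cobordism formula (Corollary~\ref{cor:surgery-cobordism}), and then use the monopole surgery exact triangle to show that the resulting map restricts to an isomorphism on the summand cut out by $\langle c_1,\bar{R}\rangle = 2g(\bar{R})-2$.

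First I would arrange $c$ to be Legendrian on $\bar{R}$. Since $c$ is nonseparating the complement $\bar{R}\setminus c$ is connected and, because the dividing set $\Gamma$ is nonempty, meets $\Gamma$; Honda's Legendrian realization principle then produces a $C^0$-small isotopy of $\bar{R}$ through convex surfaces, preserving $\Gamma$, after which $c$ is Legendrian. The contact framing of a Legendrian curve on a convex surface agrees with the framing induced by the surface, so the contact framing of $c$ coincides with its $\bar{R}$-framing. Next, a local computation in a neighborhood $\bar{R}\times[-1,1]$ shows that cutting $\bar{Y}$ along $\bar{R}$ and regluing by a positive Dehn twist $\tau_c$ is diffeomorphic to $(-1)$-Dehn surgery on $c$ with respect to the surface framing, hence contact $(-1)$-surgery (Legendrian surgery) on $c$, yielding precisely $(\bar{Y}',\bar{\xi}')$. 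The trace of this surgery is a Weinstein 2-handle cobordism $W\colon(\bar{Y},\bar{\xi})\to(\bar{Y}',\bar{\xi}')$, so Corollary~\ref{cor:surgery-cobordism} produces a map
\[
\Hto(W^{\dagger})\colon \Hto(-\bar{Y}';\Gamma_{\eta})\longrightarrow \Hto(-\bar{Y};\Gamma_{\eta})
\]
carrying $\psi(\bar{Y}',\bar{\xi}')$ to $\psi(\bar{Y},\bar{\xi})$. A negative Dehn twist is handled symmetrically by reversing the roles of $\bar{Y}$ and $\bar{Y}'$.

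To finish, I would restrict $\Hto(W^{\dagger})$ to the $\bar{R}$-summand and show that this restriction is an isomorphism. For this I would invoke the monopole surgery exact triangle from \cite{KMOS-lens} and \cite[Chapter~42]{KM-book} applied to the three surgeries on $c$ with slopes $\infty,-1,0$ relative to the surface framing, whose third vertex is the $0$-surgered manifold $\bar{Y}_0$. In $\bar{Y}_0$ the surface $\bar{R}$ compresses along the meridional disk of the surgery solid torus to a closed surface $\bar{R}_0$ of genus $g(\bar{R})-1$ in the same homology class. The adjunction inequality for $\Hto$ then forces the summand of $\Hto(-\bar{Y}_0;\Gamma_{\eta})$ with $\langle c_1,\bar{R}\rangle = 2g(\bar{R})-2 > 2g(\bar{R}_0)-2$ to vanish, so the triangle collapses to an isomorphism between the $\bar{R}$-summands of $\Hto(-\bar{Y};\Gamma_{\eta})$ and $\Hto(-\bar{Y}';\Gamma_{\eta})$, which must be the restriction of $\Hto(W^{\dagger})$.

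The main obstacle is this last step: one has to verify that the surgery exact triangle with local coefficients over the Novikov ring $\mathcal{R}$ respects the decomposition by the pairing $\langle c_1,\bar{R}\rangle$, and that the adjunction inequality over $\mathcal{R}$ is strong enough to kill the $\bar{Y}_0$ contribution at the required level $2g(\bar{R})-2$. The Legendrian realization and identification of $\tau_c$ with contact $(-1)$-surgery are largely formal, so the argument really hinges on this adjunction/triangle bookkeeping.
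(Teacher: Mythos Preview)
Your approach is essentially identical to the paper's: realize $c$ as Legendrian via the Legendrian realization principle, identify the positive Dehn twist with contact $(-1)$-surgery on $c$ (the surface framing equals the contact framing since $c\cap\Gamma=\emptyset$), apply Corollary~\ref{cor:surgery-cobordism}, and then use the surgery exact triangle together with the adjunction inequality (which kills $\Hto(-\bar{Y}_0|\bar{R})$ because $\bar{R}$ compresses in $\bar{Y}_0$) to see that $\Hto(W^{\dagger})$ restricts to an isomorphism on the $\bar{R}$-summand. The only imprecision is in your verification of the LeRP hypothesis: the nonisolating condition asks that every component of $\bar{R}\setminus(\Gamma\cup c)$ meet $\Gamma$, and the connectedness of $\bar{R}\setminus c$ does not give this directly; the paper supplies the missing case analysis (if $c$ separates its component of $\bar{R}\setminus\Gamma$ into $A$ and $B$, a path in $\bar{R}\setminus c$ from $A$ to $B$ must cross $\Gamma$, so both pieces meet $\Gamma$).
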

\begin{proof}
We observe that $c$ is nonisolating, i.e. that every component of
$\bar{R}\backslash(\Gamma\cup c)$ has a boundary component which
intersects $\Gamma$, and thus by the Legendrian Realization Principle
\cite{Kanda-LeRP,Honda-tight1} we can take $c$ to be Legendrian.
Indeed, the complement $\bar{R}\backslash\Gamma$ has two connected
components; if $c$ is nonseparating within its component then it
is clearly nonisolating. Otherwise, $c$ divides its component of
$\bar{R}\backslash\Gamma$ into two components, say $A$ and $B$.
Since $c$ is nonseparating in $\bar{R}$ there is a path in $\bar{R}\backslash c$
which connects $A$ to $B$, and this path must pass through the other
component of $\bar{R}\backslash\Gamma$. In particular, the path crosses
$\Gamma$, so both $\partial A$ and $\partial B$ intersect $\Gamma$
and thus $c$ is nonisolating.

Suppose now that $h$ is a positive Dehn twist along $c$, and that
$c$ has been realized as a Legendrian curve. Then $h$ can be realized
by $(-1)$-surgery on $c$ with respect to the framing induced by
$\bar{R}$, and since $tw(c,\bar{R})=-\frac{1}{2}|c\cap\Gamma|=0$
this is a Legendrian surgery. If $W$ is the corresponding symplectic
cobordism, and $W^{\dagger}$ is the oppositely oriented cobordism
from $-\bar{Y}'$ to $-\bar{Y}$, then \[
\Hto(W^{\dagger})(\psi(\bar{Y}',\bar{\xi}'))=\psi(\bar{Y},\bar{\xi})\]
by Corollary \ref{cor:surgery-cobordism}. The fact that $\Hto(W^{\dagger})$
gives an isomorphism $\Hto(-\bar{Y}'|\bar{R};\Gamma_{\eta})\to\Hto(-\bar{Y}|\bar{R};\Gamma_{\eta})$
is an easy consequence of the surgery exact triangle for $\Hto$ and
the fact that $\bar{R}$ becomes compressible in the manifold $-\bar{Y}_{0}$
obtained by $0$-surgery along $c$, hence $\Hto(-\bar{Y}_{0}|\bar{R})=0$
by the adjunction inequality \cite{KM-book}.

If instead $h$ is a negative Dehn twist, we note that $\bar{Y}$
can be obtained from $\bar{Y}$' by a positive Dehn twist along $c$,
so we construct a cobordism $W$ from $\bar{Y}'$ to $\bar{Y}$ as
above and then $\Hto(W^{\dagger})^{-1}$ is the desired isomorphism. 
\end{proof}

\begin{proof}[Proof of Proposition \ref{pro:diffeo-invariance}]
In general, we can arrange by an isotopy that the diffeomorphism
$h$ is actually the identity on each dividing curve. Then $h$ restricts
to a boundary-fixing diffeomorphism on the closure of each component
of $\bar{R}\backslash\Gamma$. One component is an annulus $A$, so
up to isotopy $h|_{A}$ is a composition of Dehn twists about the
core of $A$. The other component is a surface $\Sigma$ of genus
$g(\bar{R})-1\geq1$ with two boundary components, and so $h|_{\Sigma}$
can also be expressed as a product of Dehn twists about nonseparating
curves which do not intersect $\Gamma=\partial\Sigma$. Since $h=h|_{A}\circ h|_{\Sigma}$,
repeated application of Lemma \ref{lem:dehn-twist} completes the
proof of Proposition \ref{pro:diffeo-invariance}.
\end{proof}
We have now shown that the construction of $\ell(\mathcal{K})\in KHM(-Y,K)\otimes\mathcal{R}$
is independent of all choices except possibly the genus $g=g(\bar{R})$.
Thus we have constructed a sequence of Legendrian knot invariants
$\ell_{g}(\mathcal{K})$ for $g\geq2$.
\begin{conjecture}
The elements $\ell_{g}(\mathcal{K})$, $g\geq2$, are all equal as
elements of $KHM(-Y,K)\otimes\mathcal{R}$ up to automorphism.
\end{conjecture}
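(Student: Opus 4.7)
The plan is to prove the conjecture by comparing closures of successive genera and establishing that $\ell_g(\mathcal{K}) = \ell_{g+1}(\mathcal{K})$ up to an automorphism of $KHM(-Y,K)\otimes\mathcal{R}$; iterating then gives the full statement.

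First, I would realize the genus-$(g{+}1)$ auxiliary surface $T_0'$ as a handle attachment to the genus-$g$ surface $T_0$, chosen so that the dual curves $\alpha, \beta$ used to construct $T\times I$ sit inside the genus-$g$ piece of $T_0'$. Then $T'\times I$ with its $I$-invariant contact structure contains $T\times I$ as a contact submanifold, and the complement is an $I$-invariant piece carried by an extra genus-one handle with an explicit dividing set. After gluing to $Y\setminus N(\mathcal{K})$ and closing up, this extra piece should realize $(\bar{Y}_{g+1},\bar{\xi}_{g+1})$ as the result of a short sequence of Legendrian surgeries on curves contained in a neighborhood of $\bar{R}_g\subset(\bar{Y}_g,\bar{\xi}_g)$. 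Concretely, one expects to find a Legendrian attaching circle near the extra handle whose $(-1)$-contact surgery both enlarges the distinguished surface by a handle and realizes the new gluing map $h'$ in a controlled way relative to $h$.

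Second, I would apply Corollary \ref{cor:surgery-cobordism} to each Legendrian surgery in this sequence to obtain a cobordism-induced map $\Hto(W^\dagger)$ sending $\psi(\bar{\xi}_{g+1})$ to $\psi(\bar{\xi}_g)$. To conclude that the resulting map restricts to an automorphism of the $KHM\otimes\mathcal{R}$ summands and not merely a map with specified behavior on the contact class, I would compare it with the abstract isomorphism produced in the proof of Proposition 4.10 of \cite{KM-sutured} (adapted to $\mathbb{F}$-coefficients as in the previous subsection), which identifies $SHM$ across different closure genera via excision and the K\"unneth formula. The goal is to show the cobordism map and this abstract isomorphism agree up to an automorphism of $KHM$.

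The main obstacle, and presumably the reason this remains conjectural in the paper, is Step 1: realizing the genus change as a \emph{Legendrian} surgery (as opposed to a more general symplectic cobordism) requires a delicate choice of handle-attachment data compatible with both the $I$-invariant dividing set of $T'\times I$ and the gluing diffeomorphism $h$ on $\bar{R}$, and it is not immediately clear that such a choice can always be made. An alternative is to realize the transition as a boundary-exact symplectic cobordism surjective on $H^1$ and invoke Theorem \ref{thm:symplectic-cobordism} directly, but arranging the $H^1$-surjectivity condition inside the closure construction appears to require additional auxiliary handles whose effect on $\psi$ is nontrivial to control. Either route ultimately seems to demand a naturality statement for the contact invariant $\psi$ under excision-type operations, which is not proved in \cite{KM-sutured} and would need to be developed before the conjecture can be fully resolved.
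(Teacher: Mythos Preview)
The statement you are attempting to prove is left as a \emph{conjecture} in the paper; there is no proof to compare against. The paper only remarks that the conjecture would follow from a connected-sum formula
\[
\ell_{g}(\mathcal{K})\otimes\ell_{g'}(\mathcal{K}')=\ell_{g+g'-1}(\mathcal{K}\#\mathcal{K}'),
\]
since taking $\mathcal{K}'=\mathcal{U}$ and using $\ell_{g'}(\mathcal{U})=1$ would give $\ell_{g}(\mathcal{K})=\ell_{g+1}(\mathcal{K})$. This connected-sum formula is itself only expected by analogy with the LOSS invariant and is not proved.

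Your approach is genuinely different: you try to relate $(\bar{Y}_{g},\bar{\xi}_{g})$ and $(\bar{Y}_{g+1},\bar{\xi}_{g+1})$ directly by a sequence of Legendrian surgeries and then invoke Corollary~\ref{cor:surgery-cobordism}. You correctly identify the two obstacles, and they are real. First, a contact $(-1)$-surgery along a Legendrian curve $c\subset\bar{R}$ with $tw(c,\bar{R})=0$ does not raise the genus of the distinguished surface; on the contrary, the surface $\bar{R}$ becomes compressible in the surgered manifold (this is exactly what is used in the proof of Lemma~\ref{lem:dehn-twist}). Producing a new taut surface of genus $g+1$ from one of genus $g$ inside a closed $3$-manifold is not a $2$-handle operation but rather an excision/cut-and-paste operation of the sort in \cite[Proposition~4.10]{KM-sutured}, and there is no reason to expect it to arise from a Weinstein cobordism to which Corollary~\ref{cor:surgery-cobordism} applies. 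Second, even granting a suitable boundary-exact symplectic cobordism, you would need to know that the map it induces on $\Hto(\cdot\,|\,\bar{R};\Gamma_{\eta})$ agrees (up to automorphism) with the excision isomorphism identifying the two copies of $KHM$; no such naturality statement for $\psi$ under excision is available in \cite{KM-sutured}.

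In summary, your proposal is an outline of a plausible strategy, honestly flagged as incomplete, and the paper offers no proof either; the paper's suggested route via a connected-sum formula is different from yours but is equally unproved.
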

Since we will show in section \ref{sub:Legendrian-unknot} that $\ell_{g}(\mathcal{U})=1\in\mathcal{R}$
where $\mathcal{U}\subset(S^{3},\xi_{\mathrm{std}})$ is the Legendrian
unknot with $tb=-1$, this conjecture would follow from a connected
sum formula of the form \[
\ell_{g}(\mathcal{K})\otimes\ell_{g'}(\mathcal{K}')=\ell_{g+g'-1}(\mathcal{K}\#\mathcal{K}')\]
which we expect to be true by comparison with the LOSS invariant \cite[Theorem 7.1]{LOSS}.

From now on we will drop the $g$ subscript where convenient and simply
write $\ell(\mathcal{K})$ to mean $\ell_{g}(\mathcal{K})$ for some
fixed $g\geq2$.

\subsection{The Legendrian unknot\label{sub:Legendrian-unknot}}

The Legendrian representatives of the topological unknot $U\subset S^{3}$
were classified by Eliashberg and Fraser \cite{Eliashberg-Fraser}:
they are completely determined by their classical invariants $tb$
and $r$, and there is a representative $\mathcal{U}$ with $(tb,r)=(-1,0)$
so that all others are stabilizations of $\mathcal{U}$. In this subsection
we will prove that the Legendrian invariant of $\mathcal{U}$ is a
unit of $KHM(U)\otimes\mathcal{R}\cong\mathcal{R}$.

Our strategy is to explicitly determine the contact structure on a
particular closure $\bar{Y}$ of $S^{3}(\mathcal{U})$.
\begin{lem}
Let $\xi$ be the $I$-invariant contact structure on $(S^{1}\times I)\times I$
whose dividing curves on the annulus $S^{1}\times I$ are a pair of
parallel arcs $\{t_{1}\}\times I$ and $\{t_{2}\}\times I$. Then
after edge-rounding, $\xi$ is contactomorphic to the complement of
$\mathcal{U}$.\end{lem}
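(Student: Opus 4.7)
The plan is to identify $(S^1 \times I) \times I$ (after edge-rounding) as a tight contact solid torus with the same convex-boundary data as the complement of $\mathcal{U}$, and then invoke Honda's classification of tight contact structures on solid tori. Both manifolds are tight --- the former because an $I$-invariant neighborhood of a convex surface is always tight, and the latter because $(S^3,\xi_{\mathrm{std}})$ is tight --- so the task reduces to computing the boundary dividing slope on one side and observing that it matches the known slope $-1$ on the other.

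For $(S^1 \times I_1) \times I_2$ with the given contact structure $\xi$, I would first use $I$-invariance and the specified dividing arcs on the top annulus to read off the dividing set on each of the four boundary faces (top, bottom, inner side, outer side) before rounding: each face carries two parallel arcs at $\theta = t_1, t_2$. Using the outward-transverse contact vector field on each face I then identify the positive and negative regions $R_{\pm}$; the key observation is that the $R_{\pm}$ labels agree across the top--outer and bottom--inner edges but swap across the top--inner and bottom--outer edges, a consequence of the opposite outward normal directions on these paired faces. Applying the edge-rounding rule, at each ``agreeing'' edge the dividing arcs connect directly at the same value of $\theta$, while at each ``swapping'' edge the arcs shift by $t_2 - t_1$ along the edge so that the $R_{\pm}$ regions extend continuously across the rounded corner. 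Tracing a dividing curve once around the resulting boundary torus and summing the two nonzero shifts, each dividing curve winds once in the longitudinal ($S^1$) direction and once in the meridional (rounded $\partial(I_1 \times I_2)$) direction, and a careful orientation analysis identifies the slope as $-1$.

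On the other side, since $\mathcal{U}$ has $tb(\mathcal{U}) = -1$, the convex torus $\partial N(\mathcal{U})$ carries two parallel dividing curves of slope $1/tb(\mathcal{U}) = -1$ in the $(\mu,\lambda)$ basis of $N(\mathcal{U})$, and under the coordinate swap $\mu \leftrightarrow \lambda$ that relates $N(\mathcal{U})$ and its complement $V = S^3 \setminus N(\mathcal{U})$ the same slope $-1$ is inherited on $\partial V$. Thus both $(S^1 \times I) \times I$ and $V$ are tight contact solid tori with two parallel boundary dividing curves of slope $-1$, and by Honda's classification of tight contact structures on solid tori with a fixed convex boundary there is a unique such structure up to contactomorphism. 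The principal difficulty is the orientation bookkeeping required to extract slope $-1$ (rather than $+1$) from the edge-rounded picture; once that is in hand the remainder of the argument is immediate from the classification.
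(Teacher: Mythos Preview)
Your proposal is correct and follows essentially the same approach as the paper: identify both sides as tight contact solid tori with boundary dividing slope $-1$, then invoke Honda's classification to conclude they are contactomorphic. The paper's proof is terser---it simply asserts that edge-rounding yields slope $-1$ without tracing the curves---whereas you spell out the edge-rounding computation in more detail; but the underlying argument is the same.
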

\begin{proof}
By the classification of tight contact structures on solid tori \cite[Theorem 2.3]{Honda-tight1},
there is a unique tight contact structure $\Xi$ on $S^{1}\times D^{2}$
for which the dividing curves on the boundary have slope $-1$; since
$tb(\mathcal{U})=-1$, the complement of $\mathcal{U}$ must be $(S^{1}\times D^{2},\Xi)$.
But if we round the edges on $((S^{1}\times I)\times I,\xi)$, we
get a tight contact structure on $S^{1}\times D^{2}$ for which the
dividing curves on the boundary $S^{1}\times S^{1}$ have slope $-1$,
and so this contact structure must be $\Xi$ as well.\end{proof}
\begin{prop}
\label{pro:unknot-invariant}The invariant $\ell(\mathcal{U})$ is
a unit in $KHM(U)\otimes\mathcal{R}\cong\mathcal{R}$.\end{prop}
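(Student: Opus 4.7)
The plan is to identify the closure $(\bar{Y}, \bar{\xi})$ used to define $\ell(\mathcal{U})$ with an explicit $S^1$-invariant contact structure on a surface bundle over $S^1$, and then conclude by combining a rank-one computation with a nonvanishing argument for the contact invariant.

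First, by the preceding lemma the contact knot complement $(S^3 \setminus N(\mathcal{U}), \xi_{\mathrm{std}}|_{\cdot})$ is, after edge-rounding, identified with the $I$-invariant contact structure on $(S^1 \times I) \times I$ whose dividing curves on each annulus $S^1 \times I$ are two parallel arcs. The auxiliary block $(T \times I, \Xi)$ is also $I$-invariant by construction, so gluing the two pieces along annular neighborhoods of the sutures as in Section \ref{sub:construct-closure} produces $Y' \cong \bar{R}_+ \times I$ equipped with an $I$-invariant contact structure whose dividing set on each fiber consists of two parallel copies of $\alpha \subset T_0 \cong \bar{R}_+$.

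Second, by Proposition \ref{pro:diffeo-invariance} it suffices to compute $\psi(\bar{\xi})$ for any allowed closing diffeomorphism $h$. I would take $h$ to be (isotopic to) the identity under the natural identification $\bar{R}_+ \cong \bar{R}_- \cong T_0$, which automatically interchanges the positive and negative regions because of the opposite induced boundary orientations on the two sides of the $I$-interval. Then $\bar{Y} \cong \bar{R} \times S^1$ is a surface bundle over $S^1$ with $\bar{\xi}$ the corresponding $S^1$-invariant tight contact structure. Applying the $\mathbb{F}$-coefficient version of \cite[Lemma 4.7]{KM-sutured} given earlier in the excerpt yields $\Hto(-\bar{Y}|\bar{R}; \Gamma_\eta) \cong \mathcal{R}$, so $KHM(U) \otimes \mathcal{R} \cong \mathcal{R}$ is a rank-one module over the field $\mathcal{R}$.

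Finally, since every nonzero element of the field $\mathcal{R}$ is a unit, it remains to show $\psi(\bar{\xi}) \neq 0$. The $S^1$-invariant tight contact structure on $\bar{R} \times S^1$ admits a Stein filling---for example, a suitable $S^1$-invariant Stein structure on $\bar{R} \times D^2$---so by the standard nonvanishing of the monopole contact invariant under strong symplectic filling (a consequence of the Mrowka--Rollin results invoked in Theorem \ref{thm:symplectic-cobordism}), $\psi(\bar{\xi}) \neq 0$. The main obstacle will be making this last step fully rigorous with local coefficients: one must verify either that the standard Stein filling interacts correctly with the local system $\Gamma_\eta$, or alternatively exhibit $(\bar{Y}, \bar{\xi})$ as a contact structure supported by an open book decomposition whose monopole contact invariant is known to be a distinguished generator.
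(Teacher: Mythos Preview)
Your identification of the closure $(\bar Y,\bar\xi)$ with the $S^1$-invariant contact structure on $\Sigma_g\times S^1$, and your rank-one computation of $\Hto(-\bar Y|\bar R;\Gamma_\eta)\cong\mathcal R$, match the paper's argument. The gap is in the nonvanishing step, and it is more serious than a bookkeeping issue with local coefficients.

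The contact structure $\bar\xi$ you produced has dividing set on each fiber consisting of two parallel copies of a \emph{nonseparating} curve. This contact structure is \emph{not} Stein fillable, nor even strongly fillable: the Remark immediately following the paper's proof records (via Wendl and Taubes) that its \emph{untwisted} monopole contact invariant vanishes, which would be impossible if a strong filling existed. In particular, there is no ``suitable $S^1$-invariant Stein structure on $\bar R\times D^2$'' filling $\bar\xi$; indeed $\Sigma_g\times\mathbb C$ contains the compact complex curve $\Sigma_g\times\{0\}$ and so is not Stein to begin with. So the obstacle you flagged is not about making a Stein-filling argument compatible with $\Gamma_\eta$---the filling simply is not there.

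What the paper does instead is invoke Niederkr\"uger--Wendl to see that $\bar\xi$ is \emph{weakly} fillable (precisely because the dividing curves are nonseparating), and then use the fact, from \cite{KM-contact,Mrowka-cobordisms}, that weak fillability already forces $\psi(\bar\xi)$ to be a unit in $\Hto$ with local coefficients $\Gamma_\eta$. This is exactly the situation in which local coefficients are essential: the weak-filling nonvanishing theorem requires them, and the untwisted invariant genuinely vanishes. Your alternative idea of exhibiting an explicit open book would in principle give another route, but it is not needed once the weak-fillability input is in place.
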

\begin{proof}
We glue a surface $T\times I$ to $(S^{1}\times I)\times I$ as in
Section \ref{sub:construct-closure}, identifying the annuli $\partial T\times I$
with $(S^{1}\times I)\times\partial I$, to get an $I$-invariant
contact manifold $Y'=\Sigma_{g}\times I$ which is universally tight
by Giroux's criterion \cite{Giroux-convexite} and has convex boundary.
Gluing $\Sigma_{g}\times\{1\}$ to $\Sigma_{g}\times\{-1\}$ via the
identity map, we get the closure $\bar{Y}=\Sigma_{g}\times S^{1}$
with $S^{1}$-invariant, universally tight contact structure $\bar{\xi}$
and distinguished surface $\bar{R}=\Sigma_{g}\times\{*\}$. Since
no component of $\Gamma\subset\Sigma_{g}$ is separating, Theorem
5 of \cite{Niederkruger-Wendl} asserts that $\bar{\xi}$ is weakly
fillable.

The claim that $KHM(U)\otimes\mathcal{R}=\Hto(\bar{Y}|\bar{R};\Gamma_{\eta})\cong\mathcal{R}$
now follows from Corollary 2.3 of \cite{KM-sutured}. Furthermore,
since $\bar{\xi}$ is weakly fillable we know that $\psi(\bar{\xi})$
is a unit of $\Hto(\bar{Y};\Gamma_{\eta})$ \cite{KM-contact,Mrowka-cobordisms},
and since $\psi(\bar{\xi})\in\Hto(\bar{Y}|\bar{R};\Gamma_{\eta})\cong\mathcal{R}$
the proposition follows.\end{proof}
\begin{rem}
Wendl \cite[Corollary 2]{Wendl-filling} has shown that $(\bar{Y},\bar{\xi})$
has vanishing untwisted ECH contact invariant. By work of Taubes \cite{Taubes-ECH5}
it follows that the untwisted contact invariant $\psi(\bar{\xi})\in\Hto(\bar{Y}|\bar{R})$
is also zero, so we must work with local coefficients for $\ell(\mathcal{U})$
to be nonzero.
\end{rem}
We can also compute $\ell(\mathcal{U}_{Y})$ if $\mathcal{U}_{Y}$
is a Legendrian unknot in a Darboux ball of some contact manifold
$(Y,\xi)$. Observe that both $S^{3}(U)$ and $S^{3}(1)$ have $(\Sigma_{g}\times S^{1},\Sigma_{g}\times\{*\})$
as a closure, where $M(1)$ denotes the complement of a ball in $M$
with a single suture, and since $Y(U_{Y})\cong Y\#S^{3}(U)$ we conclude
that $KHM(Y,U_{Y})\cong SHM(Y(1))$.

Let $\widetilde{HM}(Y)=SHM(Y(1))\cong KHM(Y,U_{Y})$. Then clearly
$\widetilde{HM}$ is analogous to the hat version of Heegaard Floer
homology, since $\widehat{HF}(Y)\cong SFH(Y(1))$ virtually by definition
\cite{Juhasz-sutured}. In fact, it is equivalent to define $\widetilde{HM}(Y)$
as the homology of the mapping cone of $\check{C}(Y)\stackrel{U_{\dagger}}{\to}\check{C}(Y)$
(\cite{Bloom-Kunneth}), just as $\widehat{HF}(Y)$ comes from the
Heegaard Floer complex $CF^{+}(Y)$.

We will define a contact invariant $\tilde{\psi}_{g}(\xi)\in\widetilde{HM}(-Y)\otimes\mathcal{R}$
up to automorphism as $\ell_{g}(\mathcal{U}_{Y})$. (Having noted
that $\tilde{\psi}_{g}$ potentially depends on $g$ just as $\ell_{g}$
does, we will similarly drop the subscript and write $\tilde{\psi}(\xi)$
from now on.) This seems to be a reasonable choice by analogy with
the LOSS invariant $\hat{\mathcal{L}}(\mathcal{U}_{Y})\in\widehat{HFK}(-Y,U_{Y})$,
which is identified with the Heegaard Floer contact invariant $\hat{c}(\xi)\in\widehat{HF}(-Y)$
as argued in the proof of \cite[Corollary 7.3]{LOSS}.
\begin{prop}
\label{pro:general-unknot}There is a map \[
\widetilde{HM}(-Y)\otimes_{\mathbb{F}}\mathcal{R}\to\Hto(-Y)\otimes_{\mathbb{F}}\mathcal{R}\]
which sends $\tilde{\psi}(\xi)=\ell(\mathcal{U}_{Y})$ to $\psi(\xi)\otimes1$.\end{prop}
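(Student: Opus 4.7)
The plan is to realize the closure $(\bar Y,\bar\xi)$ used to compute $\ell(\mathcal U_Y)$ as a contact connected sum $(Y,\xi)\#(\Sigma_g\times S^1,\bar\xi^{S^3})$, where $(\Sigma_g\times S^1,\bar\xi^{S^3})$ is the weakly fillable closure of $S^3(\mathcal U)$ from Proposition \ref{pro:unknot-invariant}, and then to extract $\psi(\xi)$ from $\psi(\bar\xi)=\tilde\psi(\xi)$ via the symplectic cobordism theorem. Concretely, let $B$ be a Darboux ball in $(Y,\xi)$ containing $\mathcal U_Y$, so that $Y(\mathcal U_Y)=Y(1)\cup_{S^2}(B\setminus N(\mathcal U_Y))$. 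Using the same auxiliary surface $T$ (with two boundary components matching the two meridional sutures on $\partial N(\mathcal U_Y)$), $I$-invariant contact structure $\Xi$ on $T\times I$, and gluing data as in the proof of Proposition \ref{pro:unknot-invariant}, but carried out locally inside $B$, one produces a closure $(\bar Y,\bar\xi)$ of $Y(\mathcal U_Y)$ which is literally the contact connected sum $(Y,\xi)\#(\Sigma_g\times S^1,\bar\xi^{S^3})$. We take $\bar R=\Sigma_g\times\{*\}$ and choose the 1-cycle $\eta$ defining the local coefficients to lie inside the $\Sigma_g\times S^1$ summand.

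The contact connect sum is realized by a symplectic (Weinstein) 1-handle cobordism $W$ from $(Y,\xi)\sqcup(\Sigma_g\times S^1,\bar\xi^{S^3})$ to $(\bar Y,\bar\xi)$, which is boundary-exact; the map $H^1(W;\mathbb Z)\to H^1(\bar Y;\mathbb Z)$ is surjective since a 4-dimensional 1-handle does not affect $H^1$. Reasoning as in Corollary \ref{cor:surgery-cobordism}, Theorem \ref{thm:symplectic-cobordism} gives
\[
\Hto(W^\dagger)(\psi(\bar\xi))\;=\;\psi(\xi)\otimes\psi(\bar\xi^{S^3})\;\in\;\Hto(-Y)\otimes_{\mathbb F}\Hto(-(\Sigma_g\times S^1)\,|\,\Sigma_g;\Gamma_\eta),
\]
where the target uses the $\mathrm{Spin}^c$-refinement imposed by $\bar R\subset\Sigma_g\times S^1$ and the fact that the contact invariant of a disjoint union is the tensor product of the factor invariants. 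By Proposition \ref{pro:unknot-invariant} the second tensor factor is canonically $\mathcal R$ and $\psi(\bar\xi^{S^3})\in\mathcal R$ is a unit; post-composing $\Hto(W^\dagger)$ with multiplication by $1\otimes\psi(\bar\xi^{S^3})^{-1}$ then yields the desired map $\widetilde{HM}(-Y)\otimes_{\mathbb F}\mathcal R\to\Hto(-Y)\otimes_{\mathbb F}\mathcal R$, which sends $\tilde\psi(\xi)=\ell(\mathcal U_Y)$ to $\psi(\xi)\otimes 1$.

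The hard part is the first step: verifying that the closure $(\bar Y,\bar\xi)$ really does arise as a contact connected sum of $(Y,\xi)$ with $(\Sigma_g\times S^1,\bar\xi^{S^3})$ rather than some twisted variant. This amounts to running the closure construction of Proposition \ref{pro:unknot-invariant} locally inside $B$ and checking that the two boundary components of $T$, the $I$-invariant contact structure $\Xi$, the gluing diffeomorphism, and the edge-rounding near $\partial N(\mathcal U_Y)$ conspire to reproduce precisely the complement of a Darboux ball in $(\Sigma_g\times S^1,\bar\xi^{S^3})$, glued to $(Y,\xi)\setminus B$ via the standard contact connect sum. Once that identification is in hand, the rest is a routine application of Theorem \ref{thm:symplectic-cobordism}.
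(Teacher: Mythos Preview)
Your proof is correct and follows essentially the same approach as the paper: identify the closure of $Y(\mathcal U_Y)$ with the contact connected sum $(Y,\xi)\#(\Sigma_g\times S^1,\bar\xi^{S^3})$, use the symplectic $1$-handle cobordism to the disjoint union, and apply Theorem~\ref{thm:symplectic-cobordism} together with the K\"unneth identification and Proposition~\ref{pro:unknot-invariant}. The only cosmetic differences are that the paper states the connected-sum identification without further comment and absorbs the unit $\psi(\bar\xi^{S^3})$ directly rather than post-composing with its inverse.
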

\begin{proof}
Recall that $\mathcal{U}\subset(S^{3},\xi_{\mathrm{std}})$ has closure
$(\bar{Y},\bar{R})=(\Sigma_{g}\times S^{1},\Sigma_{g}\times\{*\})$
with $S^{1}$-invariant contact structure $\bar{\xi}$ and its homology
is twisted by a 1-cycle $\eta\subset\bar{Y}$. Thus the Legendrian
unknot $\mathcal{U}_{Y}\subset Y$ has closure $(Y\#\bar{Y},\bar{R})$. 

Build a symplectic cobordism $(W,\omega)$ from $(Y,\xi)\sqcup(\bar{Y},\bar{\xi})$
to $(Y\#\bar{Y},\xi\#\bar{\xi})$ by attaching a symplectic 1-handle
to the symplectization $(Y\sqcup\bar{Y})\times I$. The induced map
\[
\Hto(W_{\dagger},\mathfrak{s}_{\omega}):\Hto(-(Y\#\bar{Y}),\mathfrak{s}_{\xi}\#\mathfrak{s}_{\bar{\xi}};\Gamma_{\eta})\to\Hto(-Y\sqcup-\bar{Y},\mathfrak{s}_{\xi}\sqcup\mathfrak{s}_{\bar{\xi}};\Gamma_{\eta})\]
sends $\ell(\mathcal{U}_{Y})=\psi(\xi\#\bar{\xi})\in\Hto(-(Y\#\bar{Y});\Gamma_{\eta})$
to $\psi(\xi\sqcup\bar{\xi})$ by Theorem \ref{thm:symplectic-cobordism}.
But the map \[
\Hto(-Y,\mathfrak{s}_{\xi})\otimes_{\mathbb{F}}\Hto(-\bar{Y},\mathfrak{s}_{\bar{\xi}};\Gamma_{\eta})\to\Hto(-Y\sqcup-\bar{Y},\mathfrak{s}_{\xi}\sqcup\mathfrak{s}_{\bar{\xi}};\Gamma_{\eta})\]
coming from the Künneth theorem is an isomorphism since $\Hto(-\bar{Y},\mathfrak{s}_{\bar{\xi}};\Gamma_{\eta})\cong\mathcal{R}$
is free, so in fact $\Hto(W_{\dagger},\mathfrak{s}_{\omega})$ can
be expressed as a map \[
\Hto(-(Y\#\bar{Y}),\mathfrak{s}_{\xi}\#\mathfrak{s}_{\bar{\xi}};\Gamma_{\eta})\to\Hto(-Y,\mathfrak{s}_{\xi})\otimes_{\mathbb{F}}\mathcal{R}\]
sending $\psi(\xi\#\bar{\xi})$ to $\psi(\xi)\otimes\psi(\bar{\xi})=\psi(\xi)\otimes1$.
The source and target of this map are summands of $\Hto(-(Y\#\bar{Y})|\bar{R};\Gamma_{\eta})=\widetilde{HM}(-Y)\otimes\mathcal{R}$
and $\Hto(-Y)\otimes\mathcal{R}$, respectively, and $\psi(\xi\#\bar{\xi})$
is $\ell(\mathcal{U}_{Y})$, so we are done.\end{proof}
\begin{cor}
If $\psi(\xi)\in\Hto(-Y)$ is nonzero, then so is $\tilde{\psi}(\xi)\in\widetilde{HM}(-Y)\otimes\mathcal{R}$.
\end{cor}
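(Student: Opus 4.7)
The plan is to deduce the corollary directly from Proposition \ref{pro:general-unknot}, which supplies a map
\[
f : \widetilde{HM}(-Y) \otimes_{\mathbb{F}} \mathcal{R} \longrightarrow \Hto(-Y) \otimes_{\mathbb{F}} \mathcal{R}
\]
sending $\tilde{\psi}(\xi)$ to $\psi(\xi) \otimes 1$. The strategy is to show that if $\psi(\xi) \neq 0$ then $\psi(\xi) \otimes 1 \neq 0$ in the target, which forces the preimage $\tilde{\psi}(\xi)$ to be nonzero as well.

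For the nonvanishing of $\psi(\xi) \otimes 1$, I would observe that $\mathbb{F} = \mathbb{Z}/2\mathbb{Z}$ is a field, so $\mathcal{R}$ is free (hence flat) as an $\mathbb{F}$-module. Tensoring with $\mathcal{R}$ over $\mathbb{F}$ is therefore exact, and in particular the map $\Hto(-Y) \to \Hto(-Y) \otimes_{\mathbb{F}} \mathcal{R}$, $x \mapsto x \otimes 1$, is injective. Equivalently, choosing an $\mathbb{F}$-basis of $\Hto(-Y)$ realizes $\Hto(-Y) \otimes_{\mathbb{F}} \mathcal{R}$ as a free $\mathcal{R}$-module with the corresponding basis, and under this identification $\psi(\xi) \otimes 1$ is the image of $\psi(\xi)$ under an injection.

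There is no real obstacle here: once the map $f$ of Proposition \ref{pro:general-unknot} is in hand, the corollary is just the contrapositive of ``$f(\tilde{\psi}(\xi)) = \psi(\xi) \otimes 1$ vanishes whenever $\tilde{\psi}(\xi)$ does,'' combined with the elementary fact that extension of scalars from a field preserves nonzero elements.
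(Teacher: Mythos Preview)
Your proposal is correct and is exactly the argument the paper intends: the corollary is stated without proof immediately after Proposition \ref{pro:general-unknot}, and your deduction---apply the map from the proposition and use that tensoring over the field $\mathbb{F}$ with $\mathcal{R}$ preserves nonzero elements---is the evident reason it follows.
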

For example, if $\xi$ is strongly symplectically fillable then $\psi(\xi)$
is nonzero and primitive \cite{KM-contact,Mrowka-knots}, so Proposition
\ref{pro:general-unknot} implies that $\tilde{\psi}(\xi)$ is a primitive
element of $\widetilde{HM}(-Y)\otimes\mathcal{R}$.

\section{Vanishing results\label{sec:vanishing}}

\subsection{Loose knots}

Recall that a Legendrian knot $\mathcal{K}\subset(Y,\xi)$ is said
to be \emph{loose} if the complement of $\mathcal{K}$ is overtwisted.
\begin{prop}
\label{pro:overtwisted-torsion}If $\mathcal{K}\subset Y$ is loose,
then $\ell(\mathcal{K})=0$.\end{prop}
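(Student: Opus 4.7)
The plan is to reduce the statement directly to the vanishing of the monopole contact invariant for overtwisted contact structures, which is due to Kronheimer and Mrowka (and holds in the version with local coefficients as well).

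First I would recall the construction of $\ell(\mathcal{K})$: by definition $\ell(\mathcal{K}) = \psi(\bar{Y},\bar{\xi})$, where $\bar{\xi}$ is obtained by gluing the contact manifold $(Y\setminus N(\mathcal{K}),\xi|_{Y\setminus N(\mathcal{K})})$ to the auxiliary contact piece $(T\times I, \Xi)$ along neighborhoods of the sutures, and then closing up by identifying $\bar{R}_+$ with $\bar{R}_-$ via a dividing-set-preserving diffeomorphism. The key observation is that the contact manifold $(Y\setminus N(\mathcal{K}),\xi|_{Y\setminus N(\mathcal{K})})$ embeds into $(\bar{Y},\bar{\xi})$ as a contact submanifold — no modification is made to the contact structure on the interior of the knot complement during the closure procedure.

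Next I would use the hypothesis that $\mathcal{K}$ is loose, which means precisely that $(Y\setminus N(\mathcal{K}),\xi|_{Y\setminus N(\mathcal{K})})$ contains an overtwisted disk $D$. Since this submanifold embeds contactomorphically into $(\bar{Y},\bar{\xi})$, the disk $D$ survives as an overtwisted disk in $\bar{Y}$. Hence $\bar{\xi}$ is itself an overtwisted contact structure on $\bar{Y}$.

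Finally I would invoke the vanishing theorem of Kronheimer and Mrowka \cite{KM-contact}, which asserts that the monopole contact invariant $\psi$ vanishes for any overtwisted contact structure; this continues to hold with local coefficients in $\Gamma_\eta$ since the proof proceeds by exhibiting a nullhomologous reducible solution to the Seiberg--Witten equations, an argument which is insensitive to the local coefficient system. Therefore
\[
\ell(\mathcal{K}) = \psi(\bar{Y},\bar{\xi}) = 0 \in \Hto(-\bar{Y}|\bar{R};\Gamma_\eta),
\]
as required. There is essentially no obstacle here beyond confirming that the contact embedding of the knot complement into the closure is genuine — but this is tautological from the construction in Section \ref{sub:construct-closure}, so the proof is very short.
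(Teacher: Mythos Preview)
Your proof is correct and follows essentially the same approach as the paper: the knot complement embeds as a contact submanifold of the closure $(\bar{Y},\bar{\xi})$, so an overtwisted disk in the complement persists in $\bar{Y}$, and the vanishing of the monopole contact invariant for overtwisted structures gives $\ell(\mathcal{K})=\psi(\bar{Y},\bar{\xi})=0$. The only cosmetic difference is that the paper cites \cite[Corollary B]{Mrowka-knots} (Mrowka--Rollin) for the vanishing, whereas you cite \cite{KM-contact}; either reference is fine.
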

\begin{proof}
By assumption $Y\backslash\mathcal{K}$ has an overtwisted disk, so
any closure $(\bar{Y},\bar{\xi})$ does as well. Then $\psi(\bar{Y},\bar{\xi})$
vanishes (see \cite[Corollary B]{Mrowka-knots}), hence $\ell(\mathcal{K})$
does as well.
\end{proof}

\subsection{Stabilization}

Let $S_{+}(\mathcal{K})$ and $S_{-}(\mathcal{K})$ denote the positive
and negative stabilizations of a Legendrian knot $\mathcal{K}$, which
may also be thought of as the connected sums $\mathcal{K}\#\mathcal{U}_{\pm}$
where $\mathcal{U}_{\pm}\subset S^{3}$ is the topologically trivial
knot with $tb=-2$ and $r=\pm1$. We expect the following conjecture
to be true:
\begin{conjecture}
\label{con:stabilization}For any Legendrian knot $\mathcal{K}\subset Y$
we have $\ell(S_{-}(\mathcal{K}))=\ell(\mathcal{K})$ and $\ell(S_{+}(\mathcal{K}))=0$.
\end{conjecture}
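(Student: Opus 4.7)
My strategy is to reduce both statements of Conjecture \ref{con:stabilization} to two ingredients: a K\"unneth-type connected sum formula for $\ell$, and an explicit computation of $\ell(\mathcal{U}_-)$ paralleling Proposition \ref{pro:unknot-invariant}.

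First, I would establish a connected sum formula
\[
\ell_{g+g'-1}(\mathcal{K} \# \mathcal{K}') = \ell_g(\mathcal{K}) \otimes \ell_{g'}(\mathcal{K}')
\]
(up to the isomorphism of $KHM$ groups coming from the connected sum), by mimicking the strategy of Proposition \ref{pro:general-unknot}. The sutured complement of $\mathcal{K} \# \mathcal{K}'$ in $Y \# Y'$ is obtained from the disjoint union $Y(\mathcal{K}) \sqcup Y'(\mathcal{K}')$ by gluing along an annular neighborhood of a meridional disk. Choosing auxiliary surfaces and gluing data compatibly on all three closures, one obtains a closed contact manifold that differs from the disjoint union of the individual closures $(\bar{Y}, \bar{\xi}) \sqcup (\bar{Y}', \bar{\xi}')$ by the attachment of a symplectic $1$-handle. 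Applying Theorem \ref{thm:symplectic-cobordism} together with the K\"unneth formula for disjoint unions, exactly as in the proof of Proposition \ref{pro:general-unknot}, then yields the desired factorization.

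Second, I would compute $\ell(\mathcal{U}_-)$, where $\mathcal{U}_- \subset (S^3, \xi_{\mathrm{std}})$ is the Legendrian unknot with $tb = -2$ and $r = -1$, and show it is a unit of $\mathcal{R}$. Since $tb(\mathcal{U}_-) = -2$, the standard neighborhood has convex boundary with two dividing curves of slope $-\tfrac{1}{2}$, and the complement is (up to contactomorphism) the unique tight contact solid torus realizing this boundary data by Honda's classification. After attaching the auxiliary surface $T \times I$ as in Section \ref{sub:construct-closure} and closing up, one obtains a closed contact manifold $(\bar{Y}, \bar{\xi})$ with distinguished surface $\bar{R}$. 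The goal is to show that $\bar{\xi}$ is weakly symplectically fillable; I would do this by first verifying that after edge rounding the glued contact structure is universally tight (via Giroux's criterion, since the resulting dividing set has no contractible components), and then applying the Niederkr\"uger--Wendl fillability criterion as in the proof of Proposition \ref{pro:unknot-invariant}. By Mrowka--Rollin's nonvanishing theorem $\psi(\bar{\xi})$ is a unit of $\Hto(\bar{Y}|\bar{R};\Gamma_\eta) \cong \mathcal{R}$, and hence $\ell(\mathcal{U}_-)$ is a unit.

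Combining the two ingredients and writing $S_-(\mathcal{K}) = \mathcal{K} \# \mathcal{U}_-$ gives $\ell(S_-(\mathcal{K})) = \ell(\mathcal{K}) \otimes \ell(\mathcal{U}_-) = \ell(\mathcal{K})$ up to the unit ambiguity built into the definition of $\ell$, establishing the first statement of the conjecture. For the second statement, I use the well-known fact that $S_+ S_-(\mathcal{K})$ is always a loose Legendrian: the bypass rotations implementing a positive and a negative stabilization together produce an overtwisted disk in the knot complement (this is precisely the fact underlying Proposition \ref{pro:stabilize2}). By Proposition \ref{pro:overtwisted-torsion}, $\ell(S_- S_+(\mathcal{K})) = 0$. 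Since positive and negative stabilizations commute up to Legendrian isotopy, and the first half of the conjecture applied to the Legendrian knot $S_+(\mathcal{K})$ yields $\ell(S_-(S_+(\mathcal{K}))) = \ell(S_+(\mathcal{K}))$, we conclude $\ell(S_+(\mathcal{K})) = 0$.

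The main obstacle is the connected sum formula. The closure construction of Section \ref{sub:construct-closure} involves several choices---auxiliary surface, gluing diffeomorphism, and identification of dividing curves---which are not obviously compatible with taking connected sums, and matching them up to build a genuine boundary-exact symplectic $1$-handle cobordism requires care. Verifying the hypotheses of Theorem \ref{thm:symplectic-cobordism}, in particular surjectivity of $H^1(W;\mathbb{Z}) \to H^1(Y_+;\mathbb{Z})$, and correctly carrying the local coefficient systems $\Gamma_\eta$ through the disjoint-union K\"unneth step despite the genus mismatch between source and target closures, are the technical heart of the argument. A secondary difficulty is the fillability of the closure of $S^3(\mathcal{U}_-)$, since unlike in the $tb = -1$ case the total space is no longer an obvious $S^1$-bundle and a direct filling must be assembled by hand.
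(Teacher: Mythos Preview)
This statement is presented in the paper as an open conjecture; there is no proof to compare against. What the paper actually proves is the weaker Proposition~\ref{pro:stabilize2}, and the connected sum formula you invoke is recorded in the paper only as an expectation, not a theorem.

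Your reduction is logically sound in outline: granting the connected sum formula and $\ell(\mathcal{U}_-)\in\mathcal{R}^\times$, the identity $\ell(S_-(\mathcal{K}))=\ell(\mathcal{K})$ would follow, and then Proposition~\ref{pro:stabilize2} together with $S_+S_-\simeq S_-S_+$ would give $\ell(S_+(\mathcal{K}))=0$. But both ingredients remain genuinely open, and your sketches do not close the gaps.

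For the connected sum formula, the one-handle cobordism of Proposition~\ref{pro:general-unknot} connects the closed manifold $(Y,\xi)$ to a closure of $S^3(\mathcal{U})$; neither end is a closure of a nontrivial sutured knot complement. For two nontrivial knots the relevant identification of sutured complements is a gluing along a product annulus, not along a ball, and turning this into a boundary-exact symplectic cobordism between closures---while tracking $\mathrm{Spin}^c$ structures, local systems, and the genus shift from $(g,g')$ to $g+g'-1$---is exactly the work the paper declines to carry out.

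Your computation of $\ell(\mathcal{U}_-)$ contains a real error. Giroux's criterion says only that a convex surface without homotopically trivial dividing curves has a tight \emph{neighborhood}; it does not make the entire closure universally tight. The Niederkr\"uger--Wendl fillability result used in Proposition~\ref{pro:unknot-invariant} applies to $S^1$-invariant contact structures on $\Sigma_g\times S^1$, and there is no reason the closure of $S^3(\mathcal{U}_-)$ is of that form. More decisively, nothing in your argument distinguishes $\mathcal{U}_-$ from $\mathcal{U}_+$: the dividing set on $\bar{R}$ is identical in the two closures, and that is all your tightness/fillability test sees. If the argument worked it would equally show $\ell(\mathcal{U}_+)\in\mathcal{R}^\times$, and then your own connected sum formula would force $\ell(S_+(\mathcal{K}))=\ell(\mathcal{K})$, contradicting Proposition~\ref{pro:stabilize2} already for $\mathcal{K}=\mathcal{U}$. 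So the fillability route cannot succeed as written; any correct argument must exploit the asymmetry in the closure construction between $(\bar R_+)_-$ and $(\bar R_+)_+$.

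A smaller correction: $S_+S_-(\mathcal{K})$ is \emph{not} loose in general---its complement sits inside $(Y,\xi)$, which may be tight. The overtwisted disk in the proof of Proposition~\ref{pro:stabilize2} lives in a carefully chosen \emph{closure}, not in the knot complement. This does not break your deduction of $\ell(S_+(\mathcal{K}))=0$, since you only need the conclusion $\ell(S_+S_-(\mathcal{K}))=0$, but the stated reason is wrong.
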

A theorem of Epstein, Fuchs, and Meyer \cite{Transverse-approximations}
characterizes transverse knots as pushoffs of Legendrian knots up
to negative stabilization, and so $\ell$ would then give a transverse
knot invariant as well.

We can prove a slightly weaker result than the desired $\ell(S_{+}(\mathcal{K}))=0$
of Conjecture \ref{con:stabilization}.
\begin{prop}
\label{pro:stabilize2}If $\mathcal{K}$ is any Legendrian knot, then
$\ell(S_{+}S_{-}(\mathcal{K}))=0$.\end{prop}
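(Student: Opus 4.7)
The plan is to exhibit, in the closure $(\bar Y, \bar \xi)$ used to compute $\ell(S_+S_-(\mathcal K))$, a geometric feature that forces the monopole contact invariant to vanish. The key structural observation is that the region $A$ lying between the standard neighborhoods $N(\mathcal K)$ and $N(S_+S_-(\mathcal K))$ is a $T^2 \times I$ whose tight contact structure decomposes, in Honda's classification \cite{Honda-tight1}, as a stack of one positive basic slice (coming from $S_+$) and one negative basic slice (coming from $S_-$). This mixed-sign basic-slice structure is what I will exploit; note that the extra room it provides is available precisely because both stabilizations are present.

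First I would use the Legendrian realization principle to locate, inside $A$, a Legendrian unknot $\mathcal S$ sitting on a convex torus of intermediate dividing slope, with a standard neighborhood straddling the interface between the two basic slices. Applying Theorem \ref{thm:+1-surgery} with this choice of $\mathcal S$, and invoking the contact surgery calculus of Ding and Geiges, the resulting contact $(+1)$-surgery introduces an overtwisted disk precisely at the common boundary of the $+$ and $-$ basic slices, so the image $(S_+S_-(\mathcal K))_{\mathcal S}$ is loose in the new contact manifold. Proposition \ref{pro:overtwisted-torsion} then gives $\ell((S_+S_-(\mathcal K))_{\mathcal S}) = 0$, and Theorem \ref{thm:+1-surgery} forces the image of $\ell(S_+S_-(\mathcal K))$ under the surgery cobordism map to be zero.

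To upgrade this to the statement that $\ell(S_+S_-(\mathcal K))$ itself vanishes, I would invoke the Ding--Geiges cancellation lemma: the auxiliary Legendrian $\mathcal S$ admits a Legendrian pushoff so that the composition of contact $(+1)$-surgery on $\mathcal S$ followed by Legendrian surgery on the pushoff is a trivial contact cobordism. Combined with Corollary \ref{cor:surgery-cobordism}, this composition would realize the identity on $\ell$, so that the cobordism map of Theorem \ref{thm:+1-surgery} is injective on the class $\ell(S_+S_-(\mathcal K))$, which therefore vanishes.

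The main obstacle will be this final cancellation step. Geometrically, $\mathcal S$ must be positioned carefully inside $A$ so that it simultaneously produces an overtwisted disk under $(+1)$-surgery and admits a cancelling pushoff sitting in a Darboux ball; algebraically, the induced cobordism maps on monopole Floer homology must compose to the identity, which requires combining Theorem \ref{thm:+1-surgery} with Corollary \ref{cor:surgery-cobordism} in a manner compatible with the local coefficient system $\Gamma_\eta$. The essential role of having both $S_+$ and $S_-$ is that a single stabilization would leave only one basic slice in $A$, which would not provide enough room to accommodate both halves of the Ding--Geiges pair, and this is presumably why only the weaker statement $\ell(S_+S_-(\mathcal K)) = 0$ (rather than $\ell(S_+(\mathcal K)) = 0$) is accessible without further input.
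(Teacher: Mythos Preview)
Your approach has a genuine gap in the final cancellation step. The Ding--Geiges lemma says that contact $(+1)$-surgery on $\mathcal{S}$ followed by Legendrian $(-1)$-surgery on a pushoff $\mathcal{S}'$ returns the original contact manifold, but this does \emph{not} translate into a composition of cobordism maps equal to the identity. Trace the directions: the map of Theorem~\ref{thm:+1-surgery} comes from the Weinstein cobordism $W$ from $\bar Y_{\mathcal S}$ to $\bar Y$ (the $(-1)$-surgery undoing the $(+1)$-surgery), so $\Hto(W^\dagger)$ goes from $KHM(-Y,K)$ to $KHM(-Y_{\mathcal S},K_{\mathcal S})$. The Legendrian $(-1)$-surgery on $\mathcal{S}'\subset Y_{\mathcal S}$ is \emph{also} a Weinstein cobordism $W'$ from $\bar Y_{\mathcal S}$ to $\bar Y$, so by Corollary~\ref{cor:surgery-cobordism} the induced map $\Hto(W'^\dagger)$ again goes from $KHM(-Y,K)$ to $KHM(-Y_{\mathcal S},K_{\mathcal S})$. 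Both maps point the same way; there is no map in the reverse direction that respects the contact invariants, and hence no left inverse for the surgery map. Moreover, even topologically the stacked $2$-handle cobordism is not a product, so there is no reason for any composite to induce the identity. Without injectivity, knowing $\ell\bigl((S_+S_-(\mathcal K))_{\mathcal S}\bigr)=0$ gives no information about $\ell(S_+S_-(\mathcal K))$.

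The paper's argument is entirely different and avoids surgery maps altogether. It exploits the freedom in the gluing diffeomorphism $h:\bar R_+\to\bar R_-$ built into the closure construction. Each stabilization corresponds to a bypass in the complement $Y\backslash N(\mathcal K')$, attached along an arc $c_\pm$ on the boundary torus; the paper tracks these arcs onto $\bar R_\pm$ after attaching $T\times I$ and rounding edges, and then shows that after composing $h$ with a suitable Dehn twist along the core of the annulus $(\bar R_+)_-$, the gluing can be arranged so that $c_+$ and $c_-$ are identified in $\bar Y$. The two bypasses then fit together to form an overtwisted disk directly inside $(\bar Y,\bar\xi)$, whence $\psi(\bar\xi)=0$. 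This is where the presence of \emph{both} signs is actually used: two bypasses of opposite sign glue to an overtwisted disk, whereas a single bypass does not.
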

\begin{proof}
We will construct a closure $\bar{Y}$ of $\mathcal{K}'=S_{+}S_{-}(\mathcal{K})$
with an overtwisted disk, so that the vanishing follows from Corollary
B of \cite{Mrowka-knots}. Stabilization of a Legendrian knot $\mathcal{K}$
corresponds to attaching a bypass to its complement: if we stabilize
to get $S_{\pm}(\mathcal{K})$ inside a standard neighborhood $N(\mathcal{K})\subset Y$
and fix a standard neighborhood $N(\mathcal{K}^{\pm})\subset N(\mathcal{K})$,
then $Y\backslash N(\mathcal{K}^{\pm})$ is obtained from $Y\backslash N(\mathcal{K})$
by a bypass attachment. See \cite{Etnyre-torus} or the proof of Theorem
1.5 in \cite{Stipsicz-Vertesi} for discussion.

\begin{figure}
\begin{centering}
\includegraphics[scale=0.9]{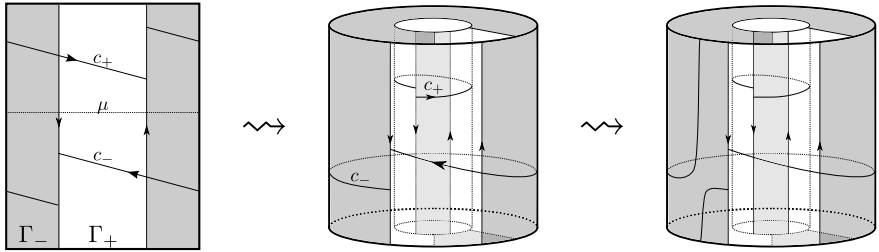}
\par\end{centering}

\caption{\label{fig:stabilize2}Attaching curves for bypasses in the complement
of $S_{+}S_{-}(\mathcal{K})$ and its closure.}

\end{figure}
In the leftmost part of Figure \ref{fig:stabilize2} we have indicated
the attaching arcs $c_{+}$ and $c_{-}$ of bypasses corresponding
to positive and negative stabilizations on $\partial(Y\backslash N(\mathcal{K}'))$,
as in Figure 10 of \cite{Stipsicz-Vertesi}, with a single meridional
suture $\mu$ between them. The dividing curves are shown with orientation
for convenience, so that they have the same orientation as the boundary
$\partial\Gamma_{+}$ of the positive region. If we start to form
the closure of $Y\backslash N(\mathcal{K}')$ by attaching a surface
$T\times I$ to neighborhoods of the sutures and then rounding edges,
we may then cut out $T\times I$ to get a contact manifold with corners
as in the middle figure; this indicates the positions of the arcs
$c_{\pm}$ on the boundary components $\bar{R}_{\pm}\subset Y'$.

We wish to glue $\bar{R}_{+}$ to $\bar{R}_{-}$ so that the arcs
$c_{+}$ and $c_{-}$ are glued together, but as shown in the middle
of Figure \ref{fig:stabilize2} we cannot do this by identifying the
inside and outside regions in the obvious way. Indeed, we must identify
the white component $(\bar{R}_{+})_{+}$ on the outside with the gray
component $(\bar{R}_{-})_{-}$ on the inside, identifying the left
dividing curve on the outside with the left dividing curve on the
inside and likewise for the right dividing curves, but then $c_{+}$
and $c_{-}$ cannot be made parallel so that they end up identified.
The problem is that as we follow them leftward and around the back
of the cylinder from the leftmost dividing curves, the arc $c_{-}$
ends {}``above'' its starting point whereas $c_{+}$ ends {}``below''
its starting point. However, we can glue $c_{+}$ to $c_{-}$ by applying
a Dehn twist to the outer gray annulus $(\bar{R}_{+})_{-}$ along
its core as shown on the right side of Figure \ref{fig:stabilize2}.
We can then {}``untwist'' $c_{-}$ by reparametrizing $(\bar{R}_{+})_{-}$,
sliding the lower endpoint of $c_{-}$ downward along its dividing
curve until it has nearly traversed the entire curve and lies just
above the other endpoint; this allows us to identify it with $(\bar{R}_{-})_{+}$
so that $c_{-}$ is sent to $c_{+}$. Now we can glue $\bar{R}_{+}$
to $\bar{R}_{-}$ so that $c_{-}$ and $c_{+}$ are identified, and
the union of their respective bypasses is an overtwisted disk in the
closure $(\bar{Y},\bar{\xi})$. We conclude that $\psi(\bar{Y},\bar{\xi})$,
and hence $\ell(\mathcal{K}')$, is zero. 
\end{proof}

\section{Contact surgery\label{sec:contact-surgery}}

\subsection{Behavior under contact $(+1)$-surgery}

The following is a direct analog of Theorem 1.1 of \cite{LOSS-surgeries},
which concerns the LOSS invariant $\hat{\mathcal{L}}(\mathcal{K})\in\widehat{HFK}(-Y,K)$
(or more generally $\mathcal{L}(\mathcal{K})\in HFK^{-}(-Y,K)$) but
is much harder to prove.
\begin{thm}
\label{thm:+1-surgery}Let $\mathcal{K}$ and $\mathcal{S}$ be disjoint
Legendrian knots in $(Y,\xi)$, and let $(Y_{\mathcal{S}},\xi_{\mathcal{S}})$
denote the contact manifold obtained by performing contact $(+1)$-surgery
along $\mathcal{S}$. Let $\mathcal{K}_{\mathcal{S}}$ be the image
of the Legendrian knot $\mathcal{K}$ in $Y_{\mathcal{S}}$. Then
there is a map $KHM(-Y,K)\otimes\mathcal{R}\to KHM(-Y_{\mathcal{S}},K_{\mathcal{S}})\otimes\mathcal{R}$
such that $\ell(\mathcal{K})\mapsto\ell(\mathcal{K}_{\mathcal{S}})$.\end{thm}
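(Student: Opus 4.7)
The plan is to realize both $\ell(\mathcal{K})$ and $\ell(\mathcal{K}_{\mathcal{S}})$ inside a single closure, and then invoke Corollary \ref{cor:surgery-cobordism} applied to the canceling contact $(-1)$-surgery on the image of $\mathcal{S}$. The only nontrivial cobordism map that needs to be constructed is the one induced by Legendrian surgery, and that work has already been done for us by Mrowka--Rollin.

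First I would choose the standard neighborhood $N(\mathcal{K})$ small enough to be disjoint from $\mathcal{S}$, so that $\mathcal{S}$ is a Legendrian knot in the sutured knot complement $(Y\setminus N(\mathcal{K}),\xi|_{Y\setminus N(\mathcal{K})})$. Following the recipe of Section \ref{sub:construct-closure}, glue on the auxiliary $(T\times I,\Xi)$ and close up by a diffeomorphism $h:\bar{R}_{+}\to\bar{R}_{-}$ to obtain a closure $(\bar{Y},\bar{\xi})$ of $Y(\mathcal{K})$ with $\ell(\mathcal{K})=\psi(\bar{\xi})$. Since these operations take place away from a standard neighborhood of $\mathcal{S}$, the knot $\mathcal{S}$ survives as a Legendrian knot in $(\bar{Y},\bar{\xi})$.

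Next I would perform contact $(+1)$-surgery on $\mathcal{S}\subset\bar{Y}$. Because this is a local operation in a standard neighborhood of $\mathcal{S}$ disjoint from $N(\mathcal{K})$, from $T\times I$, and from the gluing region used to form $\bar{Y}$, the resulting contact manifold $(\bar{Y}_{\mathcal{S}},\bar{\xi}_{\mathcal{S}})$ is exactly the closure of $Y_{\mathcal{S}}(\mathcal{K}_{\mathcal{S}})$ built from the same auxiliary surface and the same gluing $h$, with the same distinguished surface $\bar{R}$ and local-coefficient cycle $\eta$. Thus $\ell(\mathcal{K}_{\mathcal{S}})=\psi(\bar{\xi}_{\mathcal{S}})$. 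Inside $(\bar{Y}_{\mathcal{S}},\bar{\xi}_{\mathcal{S}})$ the image $\mathcal{S}'$ of $\mathcal{S}$ is a Legendrian knot, and because contact $(+1)$- and $(-1)$-surgeries cancel (Ding--Geiges), performing contact $(-1)$-surgery on $\mathcal{S}'$ recovers $(\bar{Y},\bar{\xi})$.

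Now I would invoke Corollary \ref{cor:surgery-cobordism}. Contact $(-1)$-surgery is Legendrian surgery, so it comes with a symplectic $2$-handle cobordism $W$ from $(\bar{Y}_{\mathcal{S}},\bar{\xi}_{\mathcal{S}})$ to $(\bar{Y},\bar{\xi})$ satisfying
\[
\Hto(W^{\dagger})\bigl(\psi(\bar{\xi})\bigr)=\psi(\bar{\xi}_{\mathcal{S}}),
\]
where $W^{\dagger}$ is viewed as a cobordism from $-\bar{Y}$ to $-\bar{Y}_{\mathcal{S}}$. Since the $2$-handle is attached in a region disjoint from $\bar{R}$, the distinguished surface $\bar{R}$ persists in $W^{\dagger}$ as $\bar{R}\times I$, so the adjunction inequality forces any $\mathrm{Spin}^{c}$ structure on $W^{\dagger}$ that contributes to $\Hto(W^{\dagger})$ to restrict to the summands with $\langle c_{1},\bar{R}\rangle=2g(\bar{R})-2$ on each end. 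Hence $\Hto(W^{\dagger})$ restricts to a map
\[
KHM(-Y,K)\otimes\mathcal{R}\;\longrightarrow\;KHM(-Y_{\mathcal{S}},K_{\mathcal{S}})\otimes\mathcal{R}
\]
carrying $\ell(\mathcal{K})$ to $\ell(\mathcal{K}_{\mathcal{S}})$.

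The main obstacle, I expect, is really just a bookkeeping one: verifying that the closure construction of $Y_{\mathcal{S}}(\mathcal{K}_{\mathcal{S}})$ built from the $(+1)$-surgery agrees on the nose with the naive closure of the surgered complement (so that we can legitimately identify $\psi(\bar{\xi}_{\mathcal{S}})$ with $\ell(\mathcal{K}_{\mathcal{S}})$ as constructed independently in Section \ref{sub:construct-closure}), and confirming that the cobordism map preserves the $\bar{R}$-summand. Both come down to the geometric fact that $\mathcal{S}$ (and hence the surgery handle) can be arranged entirely inside $Y\setminus N(\mathcal{K})$, far from both the auxiliary surface and its gluing. Once that is checked, the theorem is immediate from Corollary \ref{cor:surgery-cobordism}.
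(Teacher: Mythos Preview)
Your proposal is correct and follows essentially the same approach as the paper: both arguments embed $\mathcal{S}$ in a common closure, use the Ding--Geiges cancellation to realize $(\bar{Y},\bar{\xi})$ as the result of Legendrian surgery on $\mathcal{S}'\subset(\bar{Y}_{\mathcal{S}},\bar{\xi}_{\mathcal{S}})$, and then apply Corollary~\ref{cor:surgery-cobordism} to the resulting Weinstein cobordism, restricting to $\mathrm{Spin}^{c}$ structures extremal with respect to $\bar{R}$. The paper phrases the restriction step slightly differently (restricting the total map rather than invoking adjunction on $\bar{R}\times I$), but the content is the same.
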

\begin{proof}
We may obtain $(Y,\xi)$ by performing contact $(-1)$-surgery on
$\mathcal{S}\subset Y_{\mathcal{S}}$ (see \cite[Proposition 8]{Ding-Geiges_fillability}).
Since $\mathcal{S}$ and $\mathcal{K}$ are disjoint it is easy to
see that we can fix a closure $\bar{Y}_{\mathcal{S}}$ of the complement
$Y_{\mathcal{S}}(\mathcal{K}_{\mathcal{S}})$ so that contact $(-1)$-surgery
on $\mathcal{S}\subset\bar{Y}_{\mathcal{S}}$ gives a closure $\bar{Y}$
of $Y(\mathcal{K})$, and the surface $\bar{R}$ and cycle $\eta\subset\bar{R}$
are the same in both closures. The Weinstein cobordism $(W,\omega)$
from $\bar{Y}_{\mathcal{S}}$ to $\bar{Y}$ coming from this handle
attachment gives a map \[
\Hto(W^{\dagger}):\Hto(-\bar{Y};\Gamma_{\eta})\to\Hto(-\bar{Y}_{\mathcal{S}};\Gamma_{\eta})\]
carrying $\ell(\mathcal{K})$ to $\ell(\mathcal{K}_{\mathcal{S}})$
by Corollary \ref{cor:surgery-cobordism}, and $\Hto(W^{\dagger},\mathfrak{s})(\ell(\mathcal{K}))$
is zero for all $\mathrm{Spin}^{c}$ structures $\mathfrak{s}\neq\mathfrak{s}_{\omega}$.
If we restrict $\Hto(W^{\dagger})$ to the $\mathrm{Spin}^{c}$ structures
on $W$ which are extremal with respect to $\bar{R}$ on each component
of the boundary, then we have a map \[
F_{W^{\dagger}}:KHM(-Y,K)\otimes\mathcal{R}\to KHM(-Y_{\mathcal{S}},K_{\mathcal{S}})\otimes\mathcal{R}\]
such that $F_{W^{\dagger},\mathfrak{s}}(\ell(\mathcal{K}))$ is $\ell(\mathcal{K}_{\mathcal{S}})$
for a unique $\mathrm{Spin}^{c}$ structure (again, $\mathfrak{s}_{\omega}$)
and zero for all others. 
\end{proof}
We can use Theorem \ref{thm:+1-surgery} to recover an analog of a
theorem of Sahamie \cite[Theorem 6.1]{Sahamie}.
\begin{thm}
\label{thm:+1-contact-invariant}Let $\mathcal{K}\subset(Y,\xi)$
be Legendrian, and let $(Y_{\pm},\xi_{\pm})$ be the result of a contact
$(\pm1)$-surgery along $\mathcal{K}$. These surgeries induce maps
\begin{eqnarray*}
KHM(-Y,K)\otimes\mathcal{R} & \to & \widetilde{HM}(-Y_{+})\otimes\mathcal{R}\\
\widetilde{HM}(-Y_{-})\otimes\mathcal{R} & \to & KHM(-Y,K)\otimes\mathcal{R}\end{eqnarray*}
sending $\ell(\mathcal{K})\mapsto\tilde{\psi}(\xi_{+})$ and $\tilde{\psi}(\xi_{-})\mapsto0$,
respectively.\end{thm}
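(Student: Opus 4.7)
The plan is to mimic the proof of Theorem~\ref{thm:+1-surgery}: I will produce a Weinstein cobordism between carefully chosen closures of the relevant sutured manifolds, then apply Corollary~\ref{cor:surgery-cobordism} and restrict to the $\mathrm{Spin}^{c}$-structures extremal with respect to the distinguished surface. The new input is the identification $\widetilde{HM}(-Y_{\pm}) \cong KHM(-Y_{\pm}, U_{Y_{\pm}})$ from Section~\ref{sub:Legendrian-unknot}, which lets me rewrite $\tilde{\psi}(\xi_{\pm}) = \ell(\mathcal{U}_{Y_{\pm}})$ for a Legendrian Darboux-ball unknot $\mathcal{U}_{Y_{\pm}}$.

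For the first map, I invoke the Ding--Geiges cancellation \cite{Ding-Geiges_fillability}, which says that $(Y,\xi)$ is recovered from $(Y_{+}, \xi_{+})$ by contact $(-1)$-surgery on a Legendrian dual knot $\mathcal{K}^{\ast} \subset (Y_{+}, \xi_{+})$. I pick a Darboux unknot $\mathcal{U}_{Y_{+}}$ and coordinated closures $\bar{Y}_{+}$ of $Y_{+}(\mathcal{U}_{Y_{+}})$ and $\bar{Y}$ of $Y(\mathcal{K})$ so that $\mathcal{K}^{\ast}$ lies inside the ``$Y_{+}$-part'' of $\bar{Y}_{+}$ and Legendrian surgery on $\mathcal{K}^{\ast}$ produces precisely $\bar{Y}$. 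The resulting Weinstein cobordism $\bar{W}$ from $\bar{Y}_{+}$ to $\bar{Y}$ gives, via Corollary~\ref{cor:surgery-cobordism}, a map $\Hto(\bar{W}^{\dagger}): \Hto(-\bar{Y}; \Gamma_{\eta}) \to \Hto(-\bar{Y}_{+}; \Gamma_{\eta})$ sending $\psi(\bar{\xi}) = \ell(\mathcal{K})$ to $\psi(\bar{\xi}_{+}) = \tilde{\psi}(\xi_{+})$. Passing to the extremal $\mathrm{Spin}^{c}$-structures, exactly as in the proof of Theorem~\ref{thm:+1-surgery}, yields the desired map on $KHM$.

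For the second map, contact $(-1)$-surgery on $\mathcal{K}$ is itself Legendrian surgery, so there is a direct Weinstein cobordism $W$ from $(Y,\xi)$ to $(Y_{-}, \xi_{-})$. Extending to coordinated closures $\bar{Y}$ of $Y(\mathcal{K})$ and $\bar{Y}_{-}$ of $Y_{-}(\mathcal{U}_{Y_{-}})$ and applying Corollary~\ref{cor:surgery-cobordism} to $\bar{W}^{\dagger}$ yields a map $\widetilde{HM}(-Y_{-}) \otimes \mathcal{R} \to KHM(-Y, K) \otimes \mathcal{R}$ sending $\tilde{\psi}(\xi_{-}) = \psi(\bar{\xi}_{-})$ to $\psi(\bar{\xi}')$ for some induced contact structure $\bar{\xi}'$ on $\bar{Y}$. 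I then argue that $\bar{\xi}'$ admits an overtwisted disk: re-introducing $\mathcal{K}$ and then undoing the Legendrian surgery amounts to inserting a canceling pair of bypasses in $\bar{Y}$, whose net effect is to present $\bar{\xi}'$ as the closure associated with the doubly-stabilized knot $S_{+}S_{-}(\mathcal{K})$. The vanishing $\psi(\bar{\xi}') = 0$ then follows from \cite[Corollary B]{Mrowka-knots} (equivalently, from Propositions~\ref{pro:overtwisted-torsion} and~\ref{pro:stabilize2}).

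The main obstacle in both cases is the coordinated construction of closures. Since $Y(\mathcal{K})$ and $Y_{\pm}(\mathcal{U}_{Y_{\pm}})$ share the same underlying contact 3-manifold $Y \setminus N(\mathcal{K})$ but have their meridional sutures in different positions on the boundary torus, the natural closures will differ in how the auxiliary surface $T \times I$ glues in. I plan to handle this by placing the two sets of meridional sutures on a common convex boundary torus and using the edge-rounding analysis of Section~\ref{sub:construct-closure} to verify that the Weinstein 2-handle along $\mathcal{K}$ (or along $\mathcal{K}^{\ast}$) can be attached inside one closure in a way that produces the other.
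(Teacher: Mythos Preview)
Your overall strategy---build a Weinstein cobordism between closures and apply Corollary~\ref{cor:surgery-cobordism}---is the right one and is what the paper does via Theorem~\ref{thm:+1-surgery}. The gap is in the ``coordinated closures'' step, and your final paragraph rests on a false premise. The assertion that $Y(\mathcal{K})$ and $Y_{\pm}(\mathcal{U}_{Y_{\pm}})$ ``share the same underlying contact 3-manifold $Y\setminus N(\mathcal{K})$'' fails for a generic Darboux unknot $\mathcal{U}_{Y_{\pm}}$: the complement of such an unknot is $Y_{\pm}$ with a \emph{trivially} embedded solid torus removed, which is not even homeomorphic to the knot complement $Y\setminus N(\mathcal{K})$ in general. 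So there is no way to pass between the two closures by merely repositioning sutures and regluing $T\times I$. Relatedly, for your second map you propose attaching a Weinstein $2$-handle along $\mathcal{K}$ inside a closure $\bar{Y}$ of $Y(\mathcal{K})$; but $\mathcal{K}$ has been removed from that closure, so there is nothing there to attach along.

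The paper resolves both issues with a single device: it introduces Legendrian push-offs $\mathcal{K}^{\pm}$ of $\mathcal{K}$, with an extra positive or negative twist, which are \emph{disjoint} from $\mathcal{K}$. One then tracks $\mathcal{K}^{\pm}$ rather than $\mathcal{K}$ and applies Theorem~\ref{thm:+1-surgery} verbatim, with surgery curve $\mathcal{S}=\mathcal{K}$ (for the first map) or $\mathcal{S}$ the core of the surgery torus in $Y_{-}$ (for the second). The key contact-topological fact, from \cite{DG-handle_moves}, is that after contact $(\pm1)$-surgery on $\mathcal{K}$ the push-off $\mathcal{K}^{\pm}$ becomes a $tb=-1$ meridian of the surgery torus, hence a Legendrian unknot in a Darboux ball and $\ell(\mathcal{K}^{\pm}\subset Y_{\pm})=\tilde{\psi}(\xi_{\pm})$. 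Meanwhile in $(Y,\xi)$ itself one has $\mathcal{K}^{+}$ Legendrian isotopic to $\mathcal{K}$, giving $\ell(\mathcal{K}^{+})=\ell(\mathcal{K})$, and $\mathcal{K}^{-}$ Legendrian isotopic to $S_{+}S_{-}(\mathcal{K})$, giving $\ell(\mathcal{K}^{-})=0$ by Proposition~\ref{pro:stabilize2}. In other words, the ``Darboux unknot'' you need is not arbitrary but must be this specific meridian; with that choice the two closures are related by an honest $2$-handle attachment along a curve disjoint from the deleted knot---precisely the hypothesis of Theorem~\ref{thm:+1-surgery}---and no suture-matching analysis is required.
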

\begin{proof}
Let $\mathcal{K}^{\pm}$ be Legendrian push-offs of $\mathcal{K}$
with an extra positive or negative twist around $\mathcal{K}$ as
in Figure \ref{fig:pushoff-twist}, so that $\mathcal{K}^{+}$ (resp.
$\mathcal{K}^{-}$) is Legendrian (resp. topologically) isotopic to
$\mathcal{K}$. As explained in the proof of Proposition 1 of \cite{DG-handle_moves},
performing a contact $(\pm1)$-surgery on $\mathcal{K}$ turns $\mathcal{K}^{\pm}$
into a meridian of the surgery torus with $tb=-1$, so that $\mathcal{K}^{\pm}$
becomes a Legendrian unknot in $(Y_{\pm},\xi_{\pm})$. In particular,
we have $KHM(-Y_{\pm},K^{\pm})\cong\widetilde{HM}(-Y_{\pm})$ and
$\ell(\mathcal{K}^{\pm}\subset Y_{\pm})=\tilde{\psi}(\xi_{\pm})$.

\begin{figure}
\begin{centering}
\includegraphics{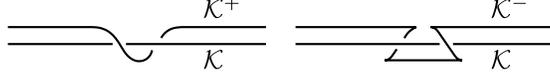}
\par\end{centering}

\caption{\label{fig:pushoff-twist}The knots $\mathcal{K}^{+}$ and $\mathcal{K}^{-}$
are constructed by adding a positive twist and a negative twist, respectively,
to a Legendrian push-off of $\mathcal{K}$.}

\end{figure}
Writing $\mathcal{S}=\mathcal{K}$ and applying Theorem \ref{thm:+1-surgery}
to $(Y,\xi)$, we now have a map \[
KHM(-Y,K^{+})\otimes\mathcal{R}\to KHM(-Y_{+},K^{+})\otimes\mathcal{R}\]
sending $\ell(\mathcal{K}^{+})=\ell(\mathcal{K})$ to $\ell(\mathcal{K}^{+}\subset Y_{+})=\tilde{\psi}(\xi_{+})$.
Similarly, if we let $\mathcal{S}\subset(Y_{-},\xi_{-})$ be the core
of the contact $(-1)$-surgery torus, then a contact $(+1)$-surgery
on $\mathcal{S}$ cancels the original $(-1)$-surgery, leaving the
original contact manifold $(Y,\xi)$. Theorem \ref{thm:+1-surgery}
then produces a map \[
KHM(-Y_{-},K^{-})\otimes\mathcal{R}\to KHM(-Y,K^{-})\otimes\mathcal{R}\]
which sends $\ell(\mathcal{K}^{-}\subset Y_{-})=\tilde{\psi}(\xi_{-})$
to $\ell(\mathcal{K}^{-}\subset Y)$. But $\mathcal{K}^{-}$ is Legendrian
isotopic in $Y$ to the double stabilization $S_{+}S_{-}(\mathcal{K})$,
hence $\ell(\mathcal{K}^{-}\subset Y)=0$ by Proposition \ref{pro:stabilize2}
and we are done.\end{proof}
\begin{cor}
If the result of contact $(+1)$-surgery on $\mathcal{K}\subset(Y,\xi)$
has nonzero contact invariant $\psi(\xi_{+})$, then $\ell(\mathcal{K})\not=0$.\end{cor}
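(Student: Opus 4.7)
The plan is to chain together the two functoriality results already established, namely Theorem \ref{thm:+1-contact-invariant} and Proposition \ref{pro:general-unknot}, to show that $\ell(\mathcal{K})$ must survive all the way to $\psi(\xi_+)\otimes 1$, which is nonzero by hypothesis.

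First I would apply Theorem \ref{thm:+1-contact-invariant} to produce a map
\[
KHM(-Y,K)\otimes\mathcal{R}\longrightarrow \widetilde{HM}(-Y_+)\otimes\mathcal{R}
\]
carrying $\ell(\mathcal{K})$ to $\tilde{\psi}(\xi_+)$. Next, I would invoke Proposition \ref{pro:general-unknot}, applied to the contact manifold $(Y_+,\xi_+)$, to produce a further map
\[
\widetilde{HM}(-Y_+)\otimes_{\mathbb{F}}\mathcal{R}\longrightarrow \Hto(-Y_+)\otimes_{\mathbb{F}}\mathcal{R}
\]
sending $\tilde{\psi}(\xi_+)$ to $\psi(\xi_+)\otimes 1$.

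The composition of these two maps then sends $\ell(\mathcal{K})$ to $\psi(\xi_+)\otimes 1$. Since $\psi(\xi_+)\neq 0$ by assumption and $\mathcal{R}$ is a flat $\mathbb{F}$-module, the image $\psi(\xi_+)\otimes 1$ is nonzero in $\Hto(-Y_+)\otimes\mathcal{R}$. A linear map cannot send $0$ to a nonzero element, so $\ell(\mathcal{K})\neq 0$.

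There is essentially no obstacle here beyond making sure the two maps compose in the intended way and that the identification $KHM(-Y_+,K^+)\cong \widetilde{HM}(-Y_+)$ used in the proof of Theorem \ref{thm:+1-contact-invariant} carries $\ell(\mathcal{K}^+\subset Y_+)$ to $\tilde{\psi}(\xi_+)$ as defined via the Legendrian unknot in a Darboux ball; but this is exactly the identification recorded in Theorem \ref{thm:+1-contact-invariant}, so nothing further is required.
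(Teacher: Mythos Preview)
Your proof is correct and follows essentially the same route as the paper: combine the map of Theorem \ref{thm:+1-contact-invariant} sending $\ell(\mathcal{K})\mapsto\tilde{\psi}(\xi_+)$ with the map of Proposition \ref{pro:general-unknot} sending $\tilde{\psi}(\xi_+)\mapsto\psi(\xi_+)\otimes 1$, and conclude from $\psi(\xi_+)\neq 0$. The only cosmetic difference is that the paper argues the contrapositive step by step rather than explicitly composing, and does not spell out the flatness of $\mathcal{R}$ over $\mathbb{F}$ (which is automatic since $\mathbb{F}$ is a field).
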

\begin{proof}
Proposition \ref{pro:general-unknot} provides a map $\widetilde{HM}(-Y)\otimes\mathcal{R}\to\Hto(-Y)\otimes\mathcal{R}$
sending $\tilde{\psi}(\xi_{+})$ to $\psi(\xi_{+})\otimes1$, so if
$\psi(\xi_{+})\not=0$ then $\tilde{\psi}(\xi_{+})\not=0$ and hence
$\ell(\mathcal{K})\not=0$ as well.
\end{proof}
For example, let $K\subset S^{3}$ be a knot with smooth slice genus
$g_{s}\geq1$, and suppose we have a Legendrian representative $\mathcal{K}\subset(S^{3},\xi_{\mathrm{std}})$
of $K$ with $tb(\mathcal{K})=2g_{s}-1$. Let $(Y_{+},\xi_{+})$ denote
the result of contact $(+1)$-surgery on $\mathcal{K}$. The following
argument of Lisca and Stipsicz \cite{Lisca-Stipsicz_tight}, translated
directly from Heegaard Floer to monopole Floer homology, shows that
$\psi(\xi_{+})\not=0$.

Letting $W$ denote the Weinstein cobordism from $(Y_{+},\xi_{+})$
to $(S^{3},\xi_{std})$ which reverses the contact $(+1)$-surgery
along $\mathcal{K}$ , we have a map \[
\Hto(-S^{3})\stackrel{F_{W^{\dagger}}}{\longrightarrow}\Hto(-Y_{+})\]
sending $\psi(\xi_{\mathrm{std}})\not=0$ to $\psi(\xi_{+})$ by Corollary
\ref{cor:surgery-cobordism}, so we wish to show that $F_{W^{\dagger}}$
is injective. Now $Y_{+}$ is the result of a topological $2g_{s}$-surgery
on $K$, so $-Y_{+}$ is the result of a $-2g_{s}$-surgery on the
mirror image $\bar{K}$ and thus $F_{W^{\dagger}}$ fits into a surgery
exact triangle \[
\xymatrix{\Hto(S^{3})\ar[rr]^{F_{W^{\dagger}}} &  & \Hto(S_{-2g_{s}}^{3}(\bar{K}))\ar[ld]\\
 & \Hto(S_{-2g_{s}+1}^{3}(\bar{K}))\ar[lu]^{F_{V}}}
\]
where $V$ is a 2-handle cobordism. Lisca and Stipsicz show that $V$
contains a closed surface $\Sigma$ of genus $g_{s}>0$ and self-intersection
$2g_{s}-1\geq0$. If $\mathfrak{s}$ is a $\mathrm{Spin}^{c}$-structure
for which $F_{V,\mathfrak{s}}\not=0$, then the adjunction inequality
for cobordisms says that \[
|\langle c_{1}(\mathfrak{s}),\Sigma\rangle|+\Sigma\cdot\Sigma\leq2g(\Sigma)-2,\]
hence $|\langle c_{1}(\mathfrak{s}),\Sigma\rangle|\leq-1$, a contradiction.
Therefore $F_{V}$ is zero and $F_{W^{\dagger}}$ is injective by
exactness.
\begin{cor}
\label{cor:nonzero-slice-bennequin}If $\mathcal{K}\subset(S^{3},\xi_{\mathrm{std}})$
is a Legendrian representative of a knot $K$ with slice genus $g_{s}>0$
and $tb(\mathcal{K})=2g_{s}-1$, then $\ell(\mathcal{K})\not=0$.
Examples include any topologically nontrivial $\mathcal{K}$ for which
$tb(\mathcal{K})=2g(K)-1$, where $g(K)$ is the Seifert genus of
$K$.
\end{cor}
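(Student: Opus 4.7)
The plan is to combine the preceding corollary (that $\psi(\xi_+) \neq 0$ implies $\ell(\mathcal{K}) \neq 0$) with the Lisca--Stipsicz style surgery-cobordism argument already sketched in the paragraph immediately before the statement, and then to derive the "examples" clause from the slice--Bennequin inequality. Essentially all the nontrivial work has been staged; the corollary is the packaging.

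For the first assertion I would proceed as follows. Let $(Y_+, \xi_+)$ be the result of contact $(+1)$-surgery on $\mathcal{K}$, and let $W$ denote the Weinstein cobordism from $(Y_+, \xi_+)$ to $(S^3, \xi_{\mathrm{std}})$ that undoes this surgery. By Corollary \ref{cor:surgery-cobordism}, the map $F_{W^\dagger} : \Hto(-S^3) \to \Hto(-Y_+)$ carries the (nonzero) contact element $\psi(\xi_{\mathrm{std}})$ to $\psi(\xi_+)$. The hypothesis $tb(\mathcal{K}) = 2g_s - 1$ means that the topological surgery coefficient on $K$ is $2g_s$, so $-Y_+$ is $(-2g_s)$-surgery on the mirror $\bar{K}$ and $F_{W^\dagger}$ sits in a surgery exact triangle alongside a $2$-handle cobordism $V$ from $-(S^3_{-2g_s+1}(\bar{K}))$ to $-Y_+$. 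Capping the slice disk(s) giving $g_s(K) = g_s$ with the core of the $2$-handle of $V$ produces a closed surface $\Sigma \subset V$ with $g(\Sigma) = g_s$ and $\Sigma \cdot \Sigma = 2g_s - 1 \geq 0$. The adjunction inequality for cobordism maps in $\Hto$ then forces any $\mathrm{Spin}^c$-structure $\mathfrak{s}$ with $F_{V,\mathfrak{s}} \neq 0$ to satisfy $|\langle c_1(\mathfrak{s}), \Sigma\rangle| + \Sigma \cdot \Sigma \leq 2g_s - 2$, which is incompatible with $\Sigma \cdot \Sigma = 2g_s - 1$. Hence $F_V = 0$, $F_{W^\dagger}$ is injective by exactness, and $\psi(\xi_+) \neq 0$; the preceding corollary then yields $\ell(\mathcal{K}) \neq 0$.

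For the "examples" clause I would combine the slice--Bennequin inequality $tb(\mathcal{K}) \leq 2g_s(K) - 1$ with the elementary $g_s(K) \leq g(K)$. If $tb(\mathcal{K}) = 2g(K) - 1$, these two inequalities must collapse to equalities, so $g_s(K) = g(K)$. Topological nontriviality of $K$ gives $g(K) \geq 1$, hence $g_s(K) \geq 1$, placing $\mathcal{K}$ squarely under the hypothesis of the first assertion. I do not anticipate any real obstacle: every ingredient — the contact surgery functoriality from Corollary \ref{cor:surgery-cobordism}, the surgery exact triangle for $\Hto$ with local coefficients over $\mathcal{R}$, and the cobordism adjunction inequality — is either cited in the paper or standard from Kronheimer--Mrowka. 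The only point meriting a brief check is that the adjunction inequality used holds with our local system $\Gamma_\eta$, which is the usual extension since the twisting is independent of the surface $\Sigma$ sitting in the interior of $V$.
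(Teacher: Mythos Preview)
Your proposal is correct and follows essentially the same approach as the paper: the corollary has no separate proof there, being an immediate consequence of the Lisca--Stipsicz argument displayed just before it together with the preceding corollary, and you have reproduced exactly that packaging (and made explicit the slice--Bennequin step for the ``examples'' clause, which the paper leaves to the reader). One small slip: in the paper's triangle the cobordism $V$ carrying the genus-$g_s$ surface is the map $\Hto(S^3_{-2g_s+1}(\bar K))\to\Hto(S^3)$, not a map into $-Y_+$; this does not affect the argument, since what matters is only that $F_V=0$ forces $F_{W^\dagger}$ to be injective by exactness.
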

For example, in \cite{Lisca-Stipsicz_tight} the authors remark that
$\overline{tb}(K)=2g(K)-1$ for any algebraic knot, where $\overline{tb}$
denotes maximal Thurston-Bennequin number. More generally, if $\mathcal{K}$
is the Legendrian closure of a positive braid \cite{Kalman-families}
with $n$ strands and $c$ crossings then it is easy to compute that
$tb(\mathcal{K})=c-n=2g(K)-1$, hence closures of positive braids
have the same property.

For any Legendrian knot $\mathcal{K}$, the Legendrian Whitehead double
$W(\mathcal{K})$ (due to Eliashberg, and denoted $\Gamma_{\mathrm{dbl}}$
by Fuchs in \cite{Fuchs-augmentations}) is constructed by taking
$\mathcal{K}$ and a slight push-off $\mathcal{K}'$ in the $z$-direction,
and then replacing a pair of parallel segments with a clasp as in
Figure \ref{fig:double-clasp}; it has genus 1 and $tb=1$.

Thus $\ell(\mathcal{K})\not=0$ if $\mathcal{K}$ is a $tb$-maximizing
representative of the closure of a positive braid or a Legendrian
Whitehead double. Similarly, there are many examples of knots with
$tb(\mathcal{K})=2g_{s}(K)-1$ where $1\le g_{s}(K)<g(K)$, and these
all have $\ell(\mathcal{K})\not=0$; according to KnotInfo \cite{KnotInfo},
the smallest examples have topological types $m(8_{21})$ and $m(9_{45})$.

\begin{figure}
\begin{centering}
\includegraphics{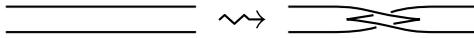}
\par\end{centering}

\caption{\label{fig:double-clasp}Constructing a Legendrian Whitehead double
from $\mathcal{K}$ and its push-off.}

\end{figure}

\subsection{Non-loose knots}

By Proposition \ref{pro:overtwisted-torsion}, a Legendrian knot $\mathcal{K}$
in an overtwisted manifold is non-loose if $\ell(\mathcal{K})\not=0$.
Our goal in this section is to apply Theorem \ref{thm:+1-surgery}
to construct examples where this is the case. In order to do so, we
will first need the following lemma on monopole knot homology and
surgery.
\begin{lem}
\label{lem:relate-surgery-maps}Let $K,S\subset Y$ be disjoint knots,
and for any integral framing $f$, let $K_{f}$ denote the image of
$K$ in the manifold $Y_{f}$ obtained by $f$-surgery along $S$.
For each $f$ there is a map $s_{f}:KHM(Y,K)\to KHM(Y_{f},K_{f})$
corresponding to a 2-handle attachment along $S$ in a closure $\bar{Y}$
of $Y\backslash K$, and these maps satisfy the following:
\begin{enumerate}
\item If $s_{f+1}$ is either injective or surjective, then $s_{f}$ is
injective.
\item If $s_{f}$ is either surjective or zero, then $s_{f+1}$ is zero.
\end{enumerate}
\end{lem}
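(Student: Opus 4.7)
The plan is to apply the monopole Floer surgery exact triangle and combine it with two cobordism-vanishing identities. Choose a closure $\bar Y$ of $Y\setminus K$ in which the auxiliary piece $T\times I$ is disjoint from $S$; this is possible because $S$ and $K$ are disjoint, and the same closure data then simultaneously yields closures $\bar Y_f$ and $\bar Y_{f+1}$ of the sutured complements of $K_f\subset Y_f$ and $K_{f+1}\subset Y_{f+1}$, all sharing a common distinguished surface $\bar R$ and dual cycle $\eta$. Applying the surgery exact triangle for $\Hto$ from \cite[Chapter 42]{KM-book} to the Farey-adjacent slopes $\infty,f,f+1$ on $S\subset\bar Y$, and restricting to the $\mathrm{Spin}^c$-structures with $\langle c_1(\mathfrak s),\bar R\rangle=2g(\bar R)-2$, yields an exact triangle
\[
KHM(Y,K)\xrightarrow{s_f}KHM(Y_f,K_f)\xrightarrow{\alpha}KHM(Y_{f+1},K_{f+1})\xrightarrow{\beta}KHM(Y,K),
\]
in which all three arrows are induced by 2-handle cobordisms.

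The geometric input is the pair of vanishing identities $\beta\circ s_{f+1}=0$ and $s_{f+1}\circ\beta=0$. Each composite is induced by a 4-manifold built by attaching two 2-handles (to $\bar Y\times I$ and to $\bar Y_{f+1}\times I$ respectively), and a single handleslide in each composite exhibits an embedded sphere of zero self-intersection (equivalently a $0$-framed unknotted 2-handle); the adjunction inequality then forces the induced map to vanish on the extremal $\mathrm{Spin}^c$-summand. Rephrased via exactness, these two identities read $\mathrm{im}(s_{f+1})\subseteq\mathrm{im}(\alpha)$ and $\ker(s_f)\subseteq\ker(s_{f+1})$.

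Granted the triangle together with these two containments, both items of the lemma are formal. If $s_{f+1}$ is injective, then $\ker(s_f)\subseteq\ker(s_{f+1})=0$, so $s_f$ is injective; if $s_{f+1}$ is surjective, then $\mathrm{im}(\alpha)\supseteq\mathrm{im}(s_{f+1})=KHM(Y_{f+1},K_{f+1})$, so $\alpha$ is surjective and $\beta=0$ by exactness, hence $s_f$ is injective; this proves (1). Conversely, $s_f$ surjective forces $\alpha=0$ by exactness, so $\mathrm{im}(s_{f+1})\subseteq\mathrm{im}(\alpha)=0$ and $s_{f+1}=0$; and $s_f=0$ forces $\ker(s_{f+1})\supseteq\ker(s_f)=KHM(Y,K)$, so $s_{f+1}=0$; this proves (2).

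The main obstacle is establishing the two cobordism-vanishing identities, which amounts to exhibiting an appropriate embedded sphere inside each composite 4-manifold. I expect this to follow from a direct handle calculus computation in the closure $\bar Y$, along the same lines as the handle-slide arguments used in the Heegaard Floer analog of this lemma for the LOSS invariant.
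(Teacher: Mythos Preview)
Your argument is correct and is essentially the same as the paper's: both use the surgery exact triangle together with the two vanishing identities $\beta\circ s_{f+1}=0$ and $s_{f+1}\circ\beta=0$ (in the paper's notation, $F_3\circ G_1=G_1\circ F_3=0$), and the formal deductions are identical. The only differences are cosmetic---the paper writes out a second triangle to name $s_{f+1}$ as a triangle map---and that you need not flag the sphere argument as an ``obstacle'': the paper simply cites \cite[Proposition 7.2]{KMOS-lens} for exactly this vanishing, so you can do the same rather than redo the handle calculus.
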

\begin{proof}
We have a surgery exact triangle \[
\xymatrix{\Hto(\bar{Y}|\bar{R})\ar[rr]^{F_{1}} &  & \Hto(\bar{Y}_{f}|\bar{R})\ar[ld]^{F_{2}}\\
 & \Hto(\bar{Y}_{f+1}|\bar{R})\ar[lu]^{F_{3}}}
\]
where $\bar{Y}$, $\bar{Y}_{f}$, and $\bar{Y}_{f+1}$ are closures
of the complements of $K$ in $Y$, $Y_{f},$ and $Y_{f+1}$; note
that $\Hto(\bar{Y}_{f}|\bar{R})=KHM(Y_{f},K_{f})$ by definition.
Similarly, we have a second triangle of the form \[
\xymatrix{\Hto(\bar{Y}|\bar{R})\ar[rr]^{G_{1}} &  & \Hto(\bar{Y}_{f+1}|\bar{R})\ar[ld]^{G_{2}}\\
 & \Hto(\bar{Y}_{f+2}|\bar{R})\ar[lu]^{G_{3}}}
\]
and by \cite[Proposition 7.2]{KMOS-lens} we have $G_{1}\circ F_{3}=F_{3}\circ G_{1}=0$,
since each composition comes from a cobordism created by a pair of
$2$-handles which contains a homologically nontrivial sphere of self-intersection
zero. This implies that $G_{1}$ (resp. $F_{3}$) is zero if $F_{3}$
(resp. $G_{1}$) is either injective or surjective.

Suppose $G_{1}$ is either injective or surjective; then $F_{3}=0$,
hence by exactness $F_{1}$ is injective. Similarly, if $F_{1}$ is
surjective then $F_{2}$ is zero, hence $F_{3}$ is injective, and
if $F_{1}$ is zero then $F_{3}$ is surjective; either of these imply
$G_{1}=0$. Since $F_{1}=s_{f}$ and $G_{1}=s_{f+1}$, we are done.\end{proof}
\begin{prop}
\label{pro:+1-surgery-nonvanishing}Let $\mathcal{K},\mathcal{S}\subset(Y,\xi)$
be nullhomologous Legendrian knots such that $S$ is homotopic to
a meridian of $K$ in $Y\backslash K$. If $\mathcal{K}_{\mathcal{S}}\subset(Y_{\mathcal{S}},\xi_{\mathcal{S}})$
is the image of $\mathcal{K}$ in the manifold obtained by contact
$(+1)$-surgery on $\mathcal{S}$, then $\ell(\mathcal{K}_{\mathcal{S}})\not=0$
if and only if $\ell(\mathcal{K})\not=0$ and $tb(\mathcal{S})\geq0$.\end{prop}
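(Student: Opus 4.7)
The direction $\ell(\mathcal{K}_{\mathcal{S}}) \neq 0 \Rightarrow \ell(\mathcal{K}) \neq 0$ is immediate from Theorem~\ref{thm:+1-surgery}, which provides a map $KHM(-Y,K) \otimes \mathcal{R} \to KHM(-Y_{\mathcal{S}}, K_{\mathcal{S}}) \otimes \mathcal{R}$ carrying $\ell(\mathcal{K})$ to $\ell(\mathcal{K}_{\mathcal{S}})$. The plan for what remains is to prove (a) $tb(\mathcal{S}) \leq -1 \Rightarrow \ell(\mathcal{K}_{\mathcal{S}}) = 0$, and (b) if $\ell(\mathcal{K}) \neq 0$ and $tb(\mathcal{S}) \geq 0$, then $\ell(\mathcal{K}_{\mathcal{S}}) \neq 0$.

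For (a), I would split into two subcases. When $tb(\mathcal{S}) \leq -2$, I would write $\mathcal{S}$ as a Legendrian stabilization $S_{\pm}(\mathcal{S}')$ of a Legendrian meridian with larger $tb$, and argue — via a bypass manipulation in the spirit of the proof of Proposition~\ref{pro:stabilize2} — that contact $(+1)$-surgery on a stabilized Legendrian knot exhibits an overtwisted disk in a neighborhood of the surgery locus. Since this locus is disjoint from $\mathcal{K}$, the complement of $\mathcal{K}_{\mathcal{S}}$ is overtwisted and Proposition~\ref{pro:overtwisted-torsion} gives $\ell(\mathcal{K}_{\mathcal{S}}) = 0$. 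When $tb(\mathcal{S}) = -1$, the contact surgery is topological $0$-surgery on $S$; using that $S$ is homotopic to a meridian of $K$ in $Y\setminus K$, one can find a Seifert disk for $S$ in the closure $\bar{Y}$ lying disjointly from the surfaces built into the closure construction, and gluing this disk to the core of the surgery $2$-handle yields an essential $2$-sphere $\Sigma$ of self-intersection zero in the cobordism $W^{\dagger} \colon -\bar{Y} \to -\bar{Y}_{\mathcal{S}}$. Computing that $\mathfrak{s}_{\omega}$ pairs nontrivially with $[\Sigma]$, the adjunction-based vanishing argument of \cite[Proposition 7.2]{KMOS-lens} forces $F_{W^{\dagger},\mathfrak{s}_{\omega}}(\ell(\mathcal{K})) = 0$, and hence $\ell(\mathcal{K}_{\mathcal{S}}) = 0$.

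For (b), I would apply Lemma~\ref{lem:relate-surgery-maps} (in its $-Y$-side version) to the family of surgery maps $s_{f}$ obtained by varying the framing along $S$. The vanishing in (a), applied to all the stabilizations of $\mathcal{S}$, shows $s_{f}(\ell(\mathcal{K})) = 0$ for every $f \leq 0$. Under the identification of the Theorem~\ref{thm:+1-surgery} map as $s_{tb(\mathcal{S})+1}$, this information about lower-framing maps can be combined with clause~1 of Lemma~\ref{lem:relate-surgery-maps} and a careful chase around the surgery exact triangle at framing $f=0$ to show that $s_{f}$ is injective on the class $\ell(\mathcal{K})$ for all $f \geq 1$; the nonvanishing $\ell(\mathcal{K}_{\mathcal{S}}) = s_{tb(\mathcal{S})+1}(\ell(\mathcal{K})) \neq 0$ follows.

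The main obstacle I expect is the $tb(\mathcal{S}) = -1$ subcase of (a): setting up the essential sphere cleanly in the cobordism when $S$ is only homotopic (and not necessarily isotopic) to a meridian, and verifying that $\mathfrak{s}_{\omega}$ pairs nontrivially with it. The bookkeeping in (b) around the surgery exact triangle on the $-Y$-side, in particular separating the behavior of $s_{f}$ on $\ell(\mathcal{K})$ from the behavior of the map as a whole, is the other delicate step.
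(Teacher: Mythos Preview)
Your plan has a genuine gap: the element-by-element strategy cannot feed into Lemma~\ref{lem:relate-surgery-maps}, which requires \emph{global} properties (injectivity, surjectivity, or vanishing) of the maps $s_f$, not just information about their value on $\ell(\mathcal{K})$. Knowing $s_f(\ell(\mathcal{K}))=0$ for $f\le 0$ does not let you invoke clause~1 to deduce injectivity of $s_f$ for $f\ge 1$, and there is no evident diagram chase that recovers injectivity on a single class from this. Part~(a) also has problems independent of this: for $tb(\mathcal{S})\le -2$ you assume $\mathcal{S}$ destabilizes, but $\mathcal{S}$ is an arbitrary nullhomologous Legendrian merely \emph{homotopic} to a meridian, so there is no reason it should be a stabilization; and for $tb(\mathcal{S})=-1$ you need $S$ to bound a disk in the closure $\bar{Y}$ to build your sphere, but homotopy to a meridian gives no such disk (indeed the meridian itself becomes a nonseparating curve on $\bar{R}$ and bounds no disk there).

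The key idea you are missing is that one can prove the entire \emph{group} $KHM((-Y)_0,K_0)=\Hto(-\bar{Y}_0|\bar{R})$ vanishes. Since $S$ is homotopic in $Y\setminus K$ to a meridian, it is homotopic in $\bar{Y}$ to a nonseparating curve $c\subset\bar{R}$; after $0$-surgery on $S$ this curve becomes nullhomotopic, so $[\bar{R}]$ is represented by a surface of genus $g(\bar{R})-1$ and the adjunction inequality kills $\Hto(-\bar{Y}_0|\bar{R})$. Once the target of $s_0$ is zero, $s_0$ is simultaneously zero and surjective, and now Lemma~\ref{lem:relate-surgery-maps} applies directly: clause~2 propagates $s_k=0$ for all $k\ge 0$, and clause~1 propagates injectivity of $s_k$ for all $k\le -1$. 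Since the map of Theorem~\ref{thm:+1-surgery} is $s_k$ with $k=-tb(\mathcal{S})-1$, both directions follow at once, with no case analysis on $tb$ and no need to discuss overtwistedness or spheres in cobordisms.
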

\begin{proof}
Observe that $Y_{\mathcal{S}}$ is obtained from $Y$ by a topological
$(tb(\mathcal{S})+1)$-surgery along $S$, so $-Y_{\mathcal{S}}$
is related to $-Y$ by a $k$-surgery along $S$, where $k=-tb(\mathcal{S})-1$.
In the notation of Lemma \ref{lem:relate-surgery-maps}, the map \[
s_{k}:KHM(-Y,K)\otimes\mathcal{R}\to KHM(-Y_{\mathcal{S}},K_{\mathcal{S}})\otimes\mathcal{R}\]
 carries $\ell(\mathcal{K})$ to $\ell(\mathcal{K}_{\mathcal{S}})$
as in Theorem \ref{thm:+1-surgery}, so we will show that $s_{k}$
is injective if $k\leq-1$ (i.e. if $tb(\mathcal{S})\geq0$) and zero
if $k\geq0$. By Lemma \ref{lem:relate-surgery-maps} it will be enough
to show that $KHM((-Y)_{0},K_{0})=\Hto((-\bar{Y})_{0}|\bar{R})=0$,
where $(\bar{Y},\bar{R})$ is a closure of $Y\backslash\mathcal{K}$
and $(-\bar{Y})_{0}$ is obtained by $0$-surgery on $S\subset-\bar{Y}$.
Indeed, this implies that $s_{k}$ is zero for $k=0$ and hence for
all $k\geq0$, and since $s_{0}$ is also surjective it follows that
$s_{k}$ is injective for all $k\leq-1$.

Since $S$ and a meridian of $K$ are homotopic in $Y\backslash K$,
they are homotopic in $-\bar{Y}$ as well, and in particular $S$
is homotopic to a nonseparating curve $c\subset\bar{R}$. When we
perform $0$-surgery along $S$ to obtain $-\bar{Y}_{0}$, then, the
curve $c$ becomes nullhomotopic and so $[\bar{R}]\in H_{2}(-\bar{Y}_{0})$
has a representative of genus $g(\bar{R})-1$. Since $g(\bar{R})\geq2$,
the adjunction inequality tells us that $\Hto(-\bar{Y}_{0}|\bar{R})=0$
as desired.\end{proof}
\begin{cor}
\label{cor:nonloose-knots}Let $\mathcal{K}\cup\mathcal{S}$ be a
two-component Legendrian link in $(S^{3},\xi_{\mathrm{std}})$ satisfying
the following:
\begin{enumerate}
\item $\mathcal{K}$ is a Legendrian unknot with $tb(\mathcal{K})=-1$.
\item $tb(\mathcal{S})\geq0$.
\item The linking number $lk(K,S)$ is $\pm1$.
\end{enumerate}
Then $\mathcal{K}_{\mathcal{S}}$ is a non-loose Legendrian knot in
the contact manifold $(S_{\mathcal{S}}^{3},\xi_{\mathcal{S}})$, where
the subscript denotes contact $(+1)$-surgery along $\mathcal{S}$.\end{cor}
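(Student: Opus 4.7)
The plan is to combine Propositions \ref{pro:unknot-invariant}, \ref{pro:+1-surgery-nonvanishing}, and \ref{pro:overtwisted-torsion} in a very short chain. The only real content is verifying the hypotheses of Proposition \ref{pro:+1-surgery-nonvanishing}, after which non-looseness is immediate from the contrapositive of Proposition \ref{pro:overtwisted-torsion}.

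First I would check that $\mathcal{S}$ is homotopic to a meridian of $K$ in $S^{3}\setminus K$. Because $K$ is the unknot, $S^{3}\setminus N(K)$ is a solid torus, so $\pi_{1}(S^{3}\setminus K)\cong\mathbb{Z}$ is generated by a meridian $\mu$ of $K$. Hence the free homotopy class of the loop $S$ in $S^{3}\setminus K$ is determined by its homology class, which is $lk(K,S)\cdot[\mu]=\pm[\mu]$; thus $S$ is freely homotopic to $\mu$ (up to orientation, which is all that is needed in the proof of Proposition \ref{pro:+1-surgery-nonvanishing}). Both $\mathcal{K}$ and $\mathcal{S}$ are automatically nullhomologous since they sit in $S^{3}$.

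Next I would invoke Proposition \ref{pro:unknot-invariant}: since $\mathcal{K}$ is the Legendrian unknot with $tb(\mathcal{K})=-1$, the invariant $\ell(\mathcal{K})$ is a unit of $KHM(U)\otimes\mathcal{R}\cong\mathcal{R}$, hence nonzero. Combined with the assumption $tb(\mathcal{S})\geq 0$, Proposition \ref{pro:+1-surgery-nonvanishing} then delivers $\ell(\mathcal{K}_{\mathcal{S}})\neq 0$ in $KHM(-S^{3}_{\mathcal{S}},K_{\mathcal{S}})\otimes\mathcal{R}$.

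Finally, the contrapositive of Proposition \ref{pro:overtwisted-torsion} forces the complement $S^{3}_{\mathcal{S}}\setminus\mathcal{K}_{\mathcal{S}}$ to be tight, which is exactly the non-looseness statement. There is no genuine obstacle: the only point requiring thought is the elementary identification of homotopy classes in the solid-torus complement of the unknot, and all the analytic and Floer-theoretic work has already been packaged into the three cited propositions.
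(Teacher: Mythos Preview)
Your proposal is correct and follows the same route as the paper's proof: apply Proposition \ref{pro:unknot-invariant} to get $\ell(\mathcal{K})\neq 0$, feed this together with $tb(\mathcal{S})\geq 0$ into Proposition \ref{pro:+1-surgery-nonvanishing}, and read off non-looseness via Proposition \ref{pro:overtwisted-torsion}. You have in fact been more careful than the paper in explicitly verifying the homotopy hypothesis of Proposition \ref{pro:+1-surgery-nonvanishing} from the linking number condition, which the paper leaves implicit.
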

\begin{proof}
We know that $\ell(\mathcal{K})=1\in\mathcal{R}$ by Proposition \ref{pro:unknot-invariant},
so Proposition \ref{pro:+1-surgery-nonvanishing} tells us that $\ell(\mathcal{K}_{\mathcal{S}})\not=0$.
\end{proof}
In particular, given a knot $\mathcal{S}$ with $tb(\mathcal{S})>0$
which satisfies all the other conditions of Corollary \ref{cor:nonloose-knots},
we can stabilize $\mathcal{S}$ to get $\mathcal{S}'$ with $tb(\mathcal{S}')\geq0$
and then apply the corollary to $\mathcal{K}\cup\mathcal{S}'$. Since
$\mathcal{S}'$ is stabilized, the contact $(+1)$-surgery results
in an overtwisted contact structure (see for example \cite{Ding-Geiges-Stipsicz}),
and $\mathcal{K}_{\mathcal{S}'}$ is non-loose.

For example, the right handed trefoil has a unique Legendrian representative
with $(tb,r)=(1,0)$. Let $\mathcal{S}$ be a stabilization of this
Legendrian knot, and let $\mathcal{K}$ be a Legendrian unknot with
$lk(\mathcal{K},\mathcal{S})=\pm1$. Then $(+1)$-surgery on $\mathcal{S}$
gives an overtwisted contact structure on the Poincaré homology sphere
$-P=-\Sigma(2,3,5)$, which in fact does not admit tight positive
contact structures \cite{Etnyre-nonexistence}, and the image $\mathcal{K}_{\mathcal{S}}$
of $\mathcal{K}$ in $-P$ is a non-loose knot. We exhibit a family
$\mathcal{K}_{n}$ of such knots in Figure \ref{fig:surgery-knots}.

\begin{figure}
\begin{centering}
\includegraphics{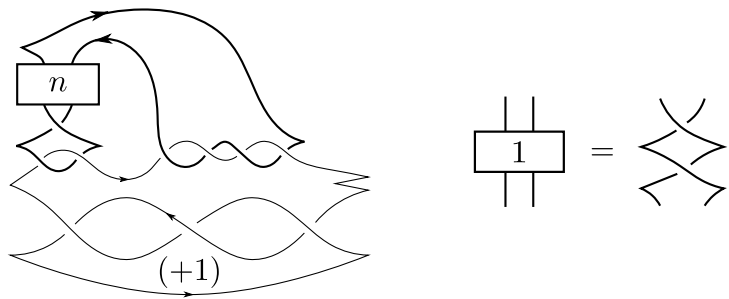}
\par\end{centering}

\caption{\label{fig:surgery-knots}A Legendrian knot $\mathcal{K}_{n}$ in
the overtwisted contact structure on $-P$ obtained from $(S^{3},\xi_{\mathrm{std}})$
by contact $(+1)$-surgery on a stabilized right-handed trefoil $\mathcal{T}$.}
\end{figure}

\begin{prop}
The knots $K_{n}$ ($n\geq0$) are all distinct, and none of them
are fibered.\end{prop}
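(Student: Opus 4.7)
The approach is to use sutured monopole homology to both distinguish the $K_n$ topologically and to detect their non-fiberedness. Two inputs will be crucial: Kronheimer--Mrowka's result that $SHM$ detects the Thurston norm, and hence the Seifert genus of a knot in a closed oriented $3$-manifold; and the monopole analog of the Ghiggini--Ni fiberedness theorem, which asserts that a knot is fibered if and only if the top Alexander-graded summand of $KHM$ has rank one. Via Lekili's equivalence one could alternatively phrase everything in terms of $\widehat{HFK}$, whose genus- and fiberedness-detection properties are classical.

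First I would identify $K_n \subset -P$ topologically. Starting from Figure \ref{fig:surgery-knots} and performing handle slides over the surgery component $\mathcal{T}$ together with a $(+1)$-blowdown, I would extract an explicit diagram of $K_n$ in $-P$ from which one can compute a Seifert surface. The key topological point I expect to verify is that the additional zig-zags in $\mathcal{K}_n$ as $n$ grows translate, after surgery on $\mathcal{T}$, into additional essential genus, so that $g(K_n)$ is a strictly increasing function of $n$. By the genus-detection property of $SHM$ this immediately gives that the $K_n$ are pairwise distinct topological knot types in $-P$.

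For non-fiberedness I would compute the top Alexander-graded summand of $KHM(-P, K_n)$ via the surgery exact triangle associated to the $2$-handle cobordism realizing $(\pm 1)$-surgery on $\mathcal{T}$ in a closure of $S^3 \setminus \mathcal{K}_n$, as used repeatedly in Section \ref{sec:contact-surgery}. Matching $\mathrm{Spin}^c$ structures and restricting to the extremal grading reduces the computation to knot homologies of a model two-component link in $S^3$, and the expected outcome is that the top grading of $KHM(-P, K_n)$ is generated by at least two independent classes, forcing $K_n$ to be non-fibered. The main obstacle is a careful bookkeeping of Alexander gradings and $\mathrm{Spin}^c$ summands through the exact triangle, so that one can certify a lower bound on the rank at the top grading; a fallback, which avoids the monopole machinery entirely, is to compute $\Delta_{K_n}(t)$ by Fox calculus on the Kirby presentation, observe that it varies with $n$, and use that a non-monic Alexander polynomial in a homology sphere obstructs fiberedness.
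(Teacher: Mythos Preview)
Your primary approach has a genuine gap: the expectation that $g(K_n)$ is strictly increasing in $n$ is unsupported, and in fact the paper's computation strongly suggests it is false. The paper shows that
\[
\Delta_{K_n}(t) = (t-1+t^{-1})\bigl(1+2(n+1)(t-2+t^{-1})\bigr),
\]
which has breadth $4$ (top term $2(n+1)t^{2}$) for every $n\geq 0$. Since knot Floer homology detects genus and has graded Euler characteristic $\Delta_{K_n}$, the top nonvanishing Alexander grading is at least $2$ for all $n$; there is no indication that it grows, and the uniform degree of $\Delta_{K_n}$ is consistent with $g(K_n)\equiv 2$. So genus detection via $SHM$ will not separate the $K_n$, and the ``additional zig-zags $\Rightarrow$ additional essential genus'' heuristic does not hold here. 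Likewise, your proposed rank computation at the top grading via surgery triangles is left entirely schematic; the $\mathrm{Spin}^c$ bookkeeping you flag as the main obstacle is exactly where the argument lives, and you have not indicated how you would resolve it without effectively computing $\Delta_{K_n}$ first.

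Your fallback is the right move, and it is essentially what the paper does. The paper avoids Floer theory altogether: it computes the multivariable Conway polynomial of the link $L=U_n\cup T\subset S^3$ by skein relations, then transports it to $-P$ using the Boyer--Lines surgery formula for $\nabla$ to extract $\Delta_{K_n}$. The polynomials are pairwise distinct, giving distinctness of the $K_n$, and none are monic, so none of the $K_n$ are fibered. Your Fox-calculus variant would reach the same conclusion, though the Conway/skein route is cleaner for a one-parameter family since a single skein move isolates the dependence on $n$. In short: drop the $SHM$ genus argument and promote the Alexander-polynomial computation from fallback to main proof.
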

\begin{proof}
Let $L=U_{n}\cup T\subset S^{3}$ and $\hat{L}=K_{n}\cup\hat{T}\subset-P$,
where $\hat{T}$ is the core of the surgery torus glued to $S^{3}\backslash T$
to obtain $-P$. We will compute the Conway polynomial $\nabla_{L}$
and use it to determine $\nabla_{\hat{L}}$ and $\nabla_{K_{n}}$,
and hence the Alexander polynomial $\Delta_{K_{n}}$, referring to
the results of \cite{BoyerLines-Conway}; note that $\nabla$ and
$\Delta$ are related by \[
\nabla_{L}(s_{1},\dots,s_{m})=\begin{cases}
(s_{1}-s_{1}^{-1})^{-1}\Delta_{L}(s_{1}^{2}), & |L|=1\\
\Delta_{L}(s_{1}^{2},s_{2}^{2},\dots,s_{m}^{2}), & |L|>1.\end{cases}\]

Since $\hat{L}$ is obtained as the cores of surgery tori for $\frac{1}{0}$-surgery
on $U_{n}$ and $\frac{1}{1}$-surgery on $T$, and $lk(U_{n},T)=-1$,
the link $\hat{L}$ is determined by $L$ and the framing matrix \[
B=\left(\begin{matrix}1 & -1\\
0 & 1\end{matrix}\right),\]
from which we can determine $lk_{-P}(K_{n},\hat{T})=-1$ and \[
\nabla_{\hat{L}}(s_{1},s_{2})=\nabla_{L}(s_{1},s_{1}s_{2})\]
by the {}``variance under surgery'' proposition. Then $\nabla_{\hat{L}}(s,1)=-(s-s^{-1})\nabla_{K_{n}}(s)$
by {}``restriction,'' and so \[
\Delta_{K_{n}}(s^{2})=(s-s^{-1})\nabla_{K_{n}}(s)=-\nabla_{\hat{L}}(s,1)=-\nabla_{L}(s,s).\]
Then we have reduced the computation of $\Delta_{K_{n}\subset-P}$
to that of $\nabla_{L\subset S^{3}}(s,s)$. Note that the latter term
is determined entirely by a skein relation $\nabla_{L_{+}}-\nabla_{L_{-}}=(s-s^{-1})\nabla_{L_{0}}$:

\begin{center}
\includegraphics{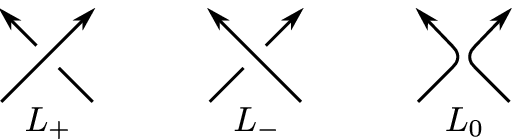}
\par\end{center}

\noindent and by $\nabla_{U}(s)=(s-s^{-1})^{-1}$, where $U$ is the
unknot.

Using the skein relation at a crossing in one of the $n$ full twists
of $U_{n}$, we see that \[
\nabla_{U_{n}\cup T}-\nabla_{U_{n-1}\cup T}=(s-s^{-1})\nabla_{L_{0}}\]
and so $\nabla_{L}=\nabla_{U_{0}\cup T}+n(s-s^{-1})\nabla_{L_{0}}$.
Applying the skein relation to the crossing of $U_{n}$ directly below
the $n$ twists, when $n=0$, we get $\nabla_{U_{0}\cup T}-\nabla_{L_{1}}=(s-s^{-1})\nabla_{L_{0}}$,
hence \[
\nabla_{L}=\nabla_{L_{1}}+(n+1)(s-s^{-1})\nabla_{L_{0}}\]
where $L=U_{n}\cup T$, $L_{0}$, and $L_{1}$ are the links in Figure
\ref{fig:skein-links}.

\begin{figure}
\begin{centering}
\includegraphics[scale=0.9]{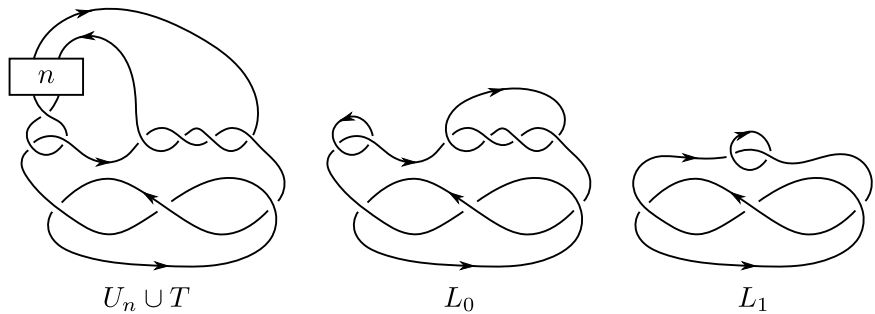}
\par\end{centering}

\caption{\label{fig:skein-links}Links appearing in the computation of $L=U_{n}\cup T$
by the skein relation.}
\end{figure}
A straightforward computation now yields \begin{eqnarray*}
\nabla_{L_{0}}(s,s,s) & = & -2(s-s^{-1})(s^{2}-1+s^{-2})\\
\nabla_{L_{1}}(s,s) & = & -(s^{2}-1+s^{-2})\end{eqnarray*}
and since $\Delta_{K_{n}}(s^{2})=-\nabla_{L_{1}}-(n+1)(s-s^{-1})\nabla_{L_{0}}$
we conclude that \[
\Delta_{K_{n}}(t)=(t-1+t^{-1})(1+2(n+1)(t-2+t^{-1})).\]
Since the Alexander polynomials $\Delta_{K_{n}}(t)$ are all distinct,
so are the $K_{n}$; and since $\Delta_{K_{n}}$ is never monic, the
$K_{n}$ cannot be fibered.
\end{proof}
We remark that in general very few examples of non-loose knots in
overtwisted contact manifolds have been studied. Etnyre \cite{Etnyre-contact_surgery}
observed that if the result $(S_{\mathcal{K}}^{3},\xi_{\mathcal{K}})$
of contact $(+1)$-surgery on $\mathcal{K}\subset(S^{3},\xi_{\mathrm{std}})$
is overtwisted, then the core $\mathcal{K}'$ of the surgery torus
in $S_{\mathcal{K}}^{3}$ is non-loose. (If $S_{\mathcal{K}}^{3}$
is the Poincaré homology sphere with either orientation, then $K$
must be a trefoil \cite{Ghiggini} and so one can show that $\Delta_{K'}=\pm\Delta_{K}\not=\Delta_{K_{n}}$.)
Furthermore, Etnyre and Vela-Vick \cite{Etnyre-Vick} showed that
given an open book decomposition which supports $(Y,\xi)$, any Legendrian
approximation of the binding is non-loose. To the best of our knowledge,
these are the only known examples in manifolds other than $S^{3}$.

In particular, it seems that the non-loose knots $\mathcal{K}_{n}\subset-P$
were not previously known, and in fact may be the only known non-fibered
examples (even in $S^{3}$) which are not the cores of surgery tori.
The construction of Corollary \ref{cor:nonloose-knots} is of course
much more general; it would be interesting to give examples of links
$\mathcal{K}_{i}\cup\mathcal{S}$ ($i=1,2$) which are topologically
but not Legendrian isotopic and which give distinct non-loose knots
$(\mathcal{K}_{i})_{\mathcal{S}}$.

\section{Lagrangian concordance\label{sec:Lagrangian-concordance}}

Chantraine \cite{Chantraine} defined an interesting notion of concordance
on the set of all Legendrian knots in a contact $3$-manifold $Y$.
\begin{defn}
Let $\mathcal{K}_{0}$ and $\mathcal{K}_{1}$ be Legendrian knots
parametrized by embeddings $\gamma_{i}:S^{1}\to Y$, and let $Y\times\mathbb{R}$
be the symplectization of $Y$. We say that $\mathcal{K}_{0}$ is
\emph{Lagrangian concordant} to $\mathcal{K}_{1}$, denoted $\mathcal{K}_{0}\prec\mathcal{K}_{1}$,
if there is a Lagrangian embedding $L:S^{1}\times\mathbb{R}\hookrightarrow Y\times\mathbb{R}$
and a $T>0$ such that $L(s,t)=(\gamma_{0}(s),t)$ for $t<-T$ and
$L(s,t)=(\gamma_{1}(s),t)$ for $t>T$.\end{defn}
\begin{thm}[\cite{Chantraine}]
The relation $\prec$ descends to a relation on Legendrian isotopy
classes of Legendrian knots. If $\mathcal{K}_{0}\prec\mathcal{K}_{1}$
then $tb(\mathcal{K}_{0})=tb(\mathcal{K}_{1})$ and $r(\mathcal{K}_{0})=r(\mathcal{K}_{1})$.
\end{thm}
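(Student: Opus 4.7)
The plan is to handle the two assertions in order: first that $\prec$ descends to Legendrian isotopy classes (and is transitive), and then that the classical invariants are preserved. In both cases the main device is that Legendrian isotopies lift to Lagrangian cylinders in the symplectization, so the key work is in using the topology of the concordance cylinder $L$ to extract bundle-theoretic information.

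For the descent statement, suppose $\mathcal{K}_0$ is Legendrian isotopic to $\mathcal{K}_0'$ via a path $\mathcal{K}_0^s$. By the Legendrian isotopy extension theorem this path extends to an ambient contact isotopy $\phi_s$ of $(Y,\xi)$. The standard lift to a Hamiltonian isotopy $\widetilde{\phi}_s$ of the symplectization $(Y\times\mathbb{R},\,d(e^t\alpha))$, reparametrized so that $s$ moves only while $t$ lies in a compact interval and is the identity outside a slightly larger interval, sweeps out an embedded Lagrangian cylinder in $Y\times\mathbb{R}$ from $\mathcal{K}_0$ to $\mathcal{K}_0'$. Concatenating this cylinder below the given Lagrangian concordance $L$ from $\mathcal{K}_0$ to $\mathcal{K}_1$ produces a Lagrangian concordance $\mathcal{K}_0'\prec\mathcal{K}_1$, and the symmetric construction handles changes of representative at the top end. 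The same concatenation shows that $\prec$ is transitive.

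For the invariance of $tb$, I will use the characterization of $tb(\mathcal{K}_i)$ as the linking number of $\mathcal{K}_i$ with a pushoff in the contact framing, measured against a Seifert surface. Since $L$ is smoothly a cylinder, capping off the top with a Seifert surface $\Sigma_1$ for $\mathcal{K}_1$ produces a Seifert surface $\Sigma_0=\Sigma_1\cup L$ for $\mathcal{K}_0$, identifying their topological knot types consistently and fixing the Seifert framing at both ends. Along $\mathcal{K}_i$, $TL$ is spanned by $T\mathcal{K}_i$ and $\partial_t$, and the Lagrangian condition identifies the symplectic normal bundle of $L$ along $\mathcal{K}_i$ with $\xi/T\mathcal{K}_i$ together with the Reeb direction. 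A nowhere-zero section of $TL$ transverse to the $S^1$-direction, which exists because $L$ is an oriented annulus, projects at each end to a nowhere-zero section of $\xi/T\mathcal{K}_i$, i.e.\ to the contact framing. Pushing this section off $L$ slightly in $\xi$ produces a parallel Lagrangian cylinder $L'$ whose two ends are precisely the contact pushoffs $\mathcal{K}_i^\xi$, so $L$ and $L'$ compute the same relative intersection number against $\Sigma_0$, yielding $tb(\mathcal{K}_0)=tb(\mathcal{K}_1)$.

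For $r$, interpret $r(\mathcal{K}_i)$ as the winding of $T\mathcal{K}_i$ inside the complex line bundle $\xi|_{\mathcal{K}_i}$ trivialized by pullback from $\Sigma_i$. The symplectization is Stein, so its canonical bundle is trivial, and $L$ carries a Lagrangian framing whose obstruction to being compatible with a global trivialization on $\Sigma_0=\Sigma_1\cup L$ is precisely the Maslov class $\mu_L\in H^1(L;\mathbb{Z})$. Since $L$ is a cylinder, $\mu_L$ is determined by a single integer, which I would show is zero by tracking $TL$ along the Reeb flow near either end: there $TL$ is spanned by a Reeb-invariant Legendrian direction and the symplectization direction, contributing nothing to the Maslov count. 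Consequently the trivializations used to compute $r(\mathcal{K}_0)$ and $r(\mathcal{K}_1)$ agree up to homotopy on the common Seifert surface, so the winding numbers agree. I expect the main obstacle to be this Maslov-class computation --- carefully translating the geometric Lagrangian hypothesis into the bundle-theoretic statement $\mu_L=0$ --- since both the descent step and the $tb$ argument are largely formal consequences of the Lagrangian condition together with the cylindrical topology of $L$.
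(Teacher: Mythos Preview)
The paper does not prove this theorem; it is quoted from Chantraine \cite{Chantraine} and no argument is given. So there is nothing in the paper to compare your proposal against, and any assessment has to be against Chantraine's original arguments rather than anything here.

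That said, your outline is broadly the right shape and matches Chantraine's strategy: lift Legendrian isotopies to Lagrangian cylinders for the descent statement, and use the bundle theory of the Lagrangian annulus for the classical invariants. One point in your $tb$ argument is garbled, however. You take a nowhere-zero section of $TL$ transverse to the $S^1$-direction and claim it ``projects at each end to a nowhere-zero section of $\xi/T\mathcal{K}_i$.'' But at the cylindrical ends $TL$ is spanned by $T\mathcal{K}_i$ and $\partial_t$, and $\partial_t$ is the symplectization direction, not a direction in $\xi$; it does not give the contact framing. The object that restricts to the contact framing is a section of the \emph{normal} bundle $\nu L$, not of $TL$: the Lagrangian condition identifies $\nu L\cong T^*L$, which is trivial on an annulus, and at the ends a trivialization of $\nu L$ decomposes as (Reeb direction) $\oplus$ (a direction in $\xi$ transverse to $\mathcal{K}_i$). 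It is the second summand that recovers the contact framing, and the global trivialization of $\nu L$ over the annulus is what forces the two contact framings to agree relative to a Seifert surface carried through the concordance. Once you swap $TL$ for $\nu L$ the rest of your $tb$ argument goes through. Your $r$ sketch via the Maslov class is the correct mechanism, though as you anticipate the actual verification that $\mu_L=0$ needs more than the one sentence you give it.
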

Our goal in this section is to investigate the behavior of $\ell(\mathcal{K})$
under Lagrangian concordance:
\begin{thm}
\label{thm:concordance-map}Let $\mathcal{K}_{0},\mathcal{K}_{1}$
be Legendrian knots in a contact homology $3$-sphere $Y$ satisfying
$\mathcal{K}_{0}\prec\mathcal{K}_{1}$. Then there is a homomorphism
\[
KHM(-Y,K_{1})\otimes\mathcal{R}\to KHM(-Y,K_{0})\otimes\mathcal{R}\]
sending $\ell(\mathcal{K}_{1})$ to $\ell(\mathcal{K}_{0})$.
\end{thm}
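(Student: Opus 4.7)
The plan is to apply Theorem~\ref{thm:symplectic-cobordism}. Concretely, I will construct a boundary-exact symplectic cobordism $(W,\omega)$ from $(\bar Y_0,\bar\xi_0)$ to $(\bar Y_1,\bar\xi_1)$ for which $H^1(W;\mathbb{Z})\to H^1(\bar Y_1;\mathbb{Z})$ is surjective; then $\Hto(W^\dagger,\mathfrak{s}_\omega)$ will send $\psi(\bar\xi_1)=\ell(\mathcal K_1)$ to $\psi(\bar\xi_0)=\ell(\mathcal K_0)$, and, just as in the proof of Theorem~\ref{thm:+1-surgery}, restricting to the $\bar R$-extremal $\mathrm{Spin}^c$ summands yields the desired map between the knot homology groups.

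The first step is to choose the closures compatibly. Since Chantraine's theorem gives $tb(\mathcal K_0)=tb(\mathcal K_1)$, the standard convex neighborhoods $N(\mathcal K_0)$ and $N(\mathcal K_1)$ have boundary tori with dividing sets of identical slope, so I can build $(\bar Y_i,\bar\xi_i,\bar R)$ using the same auxiliary $(T\times I,\Xi)$, the same gluing of annuli onto the meridional sutures, and the same $h\colon\bar R_+\to\bar R_-$. I will choose the $1$-cycle $\eta$ inside the common $T\times I$-piece so that it is identical in both closures. To build $W$, I work in the symplectization $(Y\times\mathbb{R},d(e^s\alpha))$, where the Lagrangian concordance gives a Lagrangian cylinder $L$. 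Pick a Weinstein tubular neighborhood $N(L)\cong T^*L$ arranged so that for $|s|\gg 0$ each slice $N(L)\cap(Y\times\{s\})$ coincides with the standard neighborhood of $\mathcal K_0$ or $\mathcal K_1$. For a large $T$, set $X_0=(Y\times[-T,T])\setminus\mathrm{int}(N(L))$; this is a symplectic cobordism between the two sutured knot complements. Onto $X_0$ I glue the trivial symplectization $(T\times I)\times[-T,T]$ along the suture annuli, and identify the remaining $\bar R_\pm\times[-T,T]$ side boundaries via $h\times\mathrm{id}$ to close up. The resulting $W$ is a symplectic cobordism from $(\bar Y_0,\bar\xi_0)$ to $(\bar Y_1,\bar\xi_1)$, and since each end is modeled on a slice of the genuine symplectization of $\bar\xi_i$, it is boundary-exact.

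The hypothesis that $Y$ is a homology $3$-sphere enters only in verifying the $H^1$-surjectivity: a Mayer--Vietoris computation then shows that $H^1(\bar Y_1;\mathbb{Z})$ is generated by classes supported in $T\times I$ together with the Poincar\'e dual of the meridian of $K_1$, the latter because $H_1(Y\setminus N(\mathcal K_1))\cong\mathbb{Z}$ in this case. The $T\times I$-classes extend to $W$ through its product piece, while the meridian of $K_1$ extends through $\partial N(L)\cong T^2\times I$ to the meridian of $K_0$ at the bottom. Since $\eta$ lies in the product region, the local coefficient system $\Gamma_\eta$ pulls back consistently to $W$, and Theorem~\ref{thm:symplectic-cobordism} applies. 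I expect the main technical obstacle to be the symplectic gluing itself: matching the contact forms and their symplectizations along the suture annuli, the $\bar R_\pm$ side boundaries, and the wall $\partial N(L)$ of the Weinstein tube, in such a way that $\omega$ is globally smooth and genuinely exact near each end, is a delicate local-model check that must be executed carefully.
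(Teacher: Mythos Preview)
Your approach is essentially the paper's: build a boundary-exact symplectic cobordism between compatible closures by excising a Weinstein neighborhood of the Lagrangian cylinder and gluing on the closure piece, then invoke Theorem~\ref{thm:symplectic-cobordism}. The one difference worth noting is in how the cobordism is assembled, and it directly addresses the obstacle you flag at the end. Rather than gluing on $(T\times I)\times[-T,T]$ along the suture annuli and then separately identifying the $\bar R_\pm$ side boundaries via $h\times\mathrm{id}$, the paper packages the entire closure piece $Z_i=\bar Y_i\setminus\mathrm{int}(Y\setminus N(\mathcal K_i))$ as a single contact $3$-manifold with torus boundary, observes that $Z_0$ and $Z_1$ are contactomorphic (since the closures are built identically and $tb(\mathcal K_0)=tb(\mathcal K_1)$), and then glues the symplectization $Z_0\times\mathbb{R}$ to $(Y\times\mathbb{R})\setminus N(\mathcal L)$ along a single $T^2\times\mathbb{R}$ interface. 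The Weinstein neighborhood theorem identifies both a collar of $\partial N(\mathcal L)$ and a collar of $\partial(Z_0\times\mathbb{R})$ with the complement of the zero section in $T^*(S^1\times\mathbb{R})$, so the symplectic gluing reduces to one standard local model rather than the multi-piece matching you anticipate. For the cohomological hypothesis the paper argues by Poincar\'e and Alexander duality that $H_2(\bar Y_0)\to H_2(W)$ is an isomorphism, in place of your Mayer--Vietoris argument; either route works.
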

We compare this with the remarks in \cite[Section 5.2]{Chantraine},
where it is observed that Lagrangian concordance induces a map $LCH(\mathcal{K}_{1})\to LCH(\mathcal{K}_{0})$
on Legendrian contact homology.
\begin{proof}
We fix a particular closure $(\bar{Y}_{i},\bar{R}_{i})$ of the sutured
knot complements $Y(\mathcal{K}_{i})$: place the meridional sutures
close together so that in $\partial(Y\backslash\mathcal{K}_{i})$
they bound an annulus $A$ in which the dividing curves are parallel
to a longitude. In the other annulus $A'$ bounded by the sutures,
the dividing curves twist around the meridional direction a total
of $tb(\mathcal{K}_{i})$ times; recall that $tb(\mathcal{K}_{0})=tb(\mathcal{K}_{1})$.
We glue a surface $T\times I$ to each complement and round edges,
resulting in a manifold with boundary $\bar{R}_{+}\sqcup\bar{R}_{-}$
and $\mathrm{int}(A)\subset\bar{R}_{+}$. Finally, we glue $\bar{R}_{+}$
to $\bar{R}_{-}$ by identifying $(x,1)\in T\times\{1\}$ to $(x,-1)\in T\times\{-1\}$
for all $x\in\mathrm{int}(T)$, and identifying $A$ with $A'$ by
a homeomorphism composed of enough Dehn twists around the core of
$A$ to make the dividing curves match.

This construction guarantees that $Z_{0}=\bar{Y}_{0}\backslash\mathrm{int}(Y\backslash\mathcal{K}_{0})$
and $Z_{1}=\bar{Y}_{1}\backslash\mathrm{int}(Y\backslash\mathcal{K}_{1})$
are contactomorphic as $3$-manifolds with torus boundary. In the
symplectization $Y\times\mathbb{R}$, the cylinder $\mathcal{K}_{0}\times\mathbb{R}$
is Lagrangian, hence it has a standard neighborhood symplectomorphic
to a neighborhood $N$ of the $0$-section in $T^{*}(S^{1}\times\mathbb{R})$.
Then a neighborhood of the boundary $T^{2}\times\mathbb{R}$ of the
symplectization $Z_{0}\times\mathbb{R}$, can be identified with the
complement of the $0$-section in $N$.

Now consider the Lagrangian cylinder $\mathcal{L}\subset Y\times\mathbb{R}$
defining the concordance from $\mathcal{K}_{0}$ to $\mathcal{K}_{1}$.
Once again, $\mathcal{L}$ has a neighborhood symplectomorphic to
$N$; if we remove a sufficiently small neighborhood of $\mathcal{L}$,
then there is a collar neighborhood of $\partial((Y\times\mathbb{R})\backslash\mathcal{L})$
which is orientation-reversing symplectomorphic to $N$ with the $0$-section
removed. Thus we can glue $(Y\times\mathbb{R})\backslash\mathcal{L}$
to $Z_{0}\times\mathbb{R}$ to get a symplectic manifold $W$ with
two infinite ends. One of these ends is a piece $\bar{Y}_{0}\times(-\infty,T]$
of the symplectization of $\bar{Y}_{0}$, and since $Z_{0}$ is contactomorphic
to $Z_{1}$ the other end is $\bar{Y}_{1}\times[T,\infty)$. Thus
$W$ is a boundary-exact symplectic cobordism from $\bar{Y}_{0}$
to $\bar{Y}_{1}$.

Finally, we wish to show that the map $i^{*}:H^{1}(W,\bar{Y}_{1})\to H^{1}(\bar{Y}_{0})$
is zero. By Poincaré duality it suffices to show that $H_{3}(W,\bar{Y}_{0})\to H_{2}(\bar{Y}_{0})$
is zero, or equivalently (by the long exact sequence of the pair $(W,\bar{Y}_{0})$)
that the map $H_{2}(\bar{Y}_{0})\to H_{2}(W)$ is injective. But there
is a natural isomorphism $H_{2}((Y\times\mathbb{R})\backslash\mathcal{L})\cong H_{2}(Y\backslash\mathcal{K}_{0})$
by Alexander duality, hence by the Mayer-Vietoris sequence and the
five lemma it follows that $H_{2}(\bar{Y}_{0})\to H_{2}(W)$ is an
isomorphism as well, and so $i^{*}$ is indeed zero.

Since $W$ is a boundary-exact symplectic cobordism and $H^{1}(W,\bar{Y}_{1})\to H^{1}(\bar{Y}_{0})$
is zero, we apply Theorem \ref{thm:symplectic-cobordism} to conclude
that \[
\psi(\bar{Y}_{0},\bar{\xi}_{0})=\Hto(W^{\dagger},\mathfrak{s}_{\omega})\psi(\bar{Y}_{1},\bar{\xi}_{1}).\]
Thus $\Hto(W^{\dagger},\mathfrak{s}_{\omega})$ induces a map $f:KHM(-Y,K_{1})\otimes\mathcal{R}\to KHM(-Y,K_{0})\otimes\mathcal{R}$
satisfying $f(\ell(\mathcal{K}_{1}))=\ell(\mathcal{K}_{0})$, as desired.\end{proof}
\begin{cor}
If $\mathcal{K}_{0}\prec\mathcal{K}_{1}$ and $\ell(\mathcal{K}_{0})$
is nonzero, then so is $\ell(\mathcal{K}_{1})$.
\end{cor}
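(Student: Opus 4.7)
The plan is to derive this directly from Theorem \ref{thm:concordance-map}, which is the substantive result immediately preceding the corollary. That theorem produces a homomorphism
\[
f\colon KHM(-Y,K_{1})\otimes\mathcal{R}\to KHM(-Y,K_{0})\otimes\mathcal{R}
\]
with $f(\ell(\mathcal{K}_{1}))=\ell(\mathcal{K}_{0})$, so the corollary follows by contrapositive: if $\ell(\mathcal{K}_{1})=0$ then $\ell(\mathcal{K}_{0})=f(0)=0$, contradicting the hypothesis.

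There is essentially no further work to do. The one thing worth noting, for completeness, is that $f$ is $\mathcal{R}$-linear (it is induced by a cobordism map $\Hto(W^{\dagger},\mathfrak{s}_{\omega})$ restricted to the appropriate summand cut out by the distinguished surface $\bar{R}$), so $f(0)=0$ genuinely holds in the module sense and the implication is literally one line.

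The only conceivable obstacle would be if one wanted a quantitative refinement (for example, tracking primitivity or the $\mathcal{R}$-torsion type of $\ell(\mathcal{K}_{1})$ in terms of that of $\ell(\mathcal{K}_{0})$), but the statement as phrased is purely qualitative and so the proof is immediate from Theorem \ref{thm:concordance-map}.
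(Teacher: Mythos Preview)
Your proposal is correct and matches the paper's approach: the paper states this corollary without proof, since it follows immediately from Theorem \ref{thm:concordance-map} by the contrapositive argument you give.
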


\begin{cor}
If a Legendrian knot $\mathcal{K}\subset(S^{3},\xi_{0})$ bounds a
Lagrangian disk in the standard symplectic 4-ball $B^{4}$, then $\ell(\mathcal{K})$
is a unit of $KHM(-S^{3},K)$.\end{cor}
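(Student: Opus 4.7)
The plan is to deduce this from Theorem \ref{thm:concordance-map} and Proposition \ref{pro:unknot-invariant} by producing a Lagrangian concordance $\mathcal{U}\prec\mathcal{K}$ from the standard Legendrian unknot $\mathcal{U}\subset(S^{3},\xi_{\mathrm{std}})$ with $tb=-1$. Assuming such a concordance, Theorem \ref{thm:concordance-map} (which applies since $S^{3}$ is a homology 3-sphere) yields an $\mathcal{R}$-linear homomorphism
\[ f:KHM(-S^{3},K)\otimes\mathcal{R}\to KHM(-S^{3},U)\otimes\mathcal{R}\cong\mathcal{R} \]
sending $\ell(\mathcal{K})$ to $\ell(\mathcal{U})$, and Proposition \ref{pro:unknot-invariant} identifies this target value with $1\in\mathcal{R}$. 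Thus $\ell(\mathcal{K})$ maps to a unit of $\mathcal{R}$, so $f$ is split surjective and $\ell(\mathcal{K})$ generates a rank-one free summand of $KHM(-S^{3},K)\otimes\mathcal{R}$, which is the sense in which it is a unit.

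To produce the concordance, I would fix an interior point $p$ of the Lagrangian disk $L\subset B^{4}$ bounded by $\mathcal{K}$. By Weinstein's Lagrangian neighborhood theorem, a sufficiently small standard Darboux ball $B_{\epsilon}$ centered at $p$ may be chosen so that $L\cap B_{\epsilon}$ is a flat Lagrangian disk through $p$; in particular its boundary $\partial(L\cap B_{\epsilon})\subset\partial B_{\epsilon}\cong(S^{3},\xi_{\mathrm{std}})$ is the standard Legendrian unknot $\mathcal{U}$ with $tb=-1$. The radial Liouville vector field on $B^{4}$ identifies $B^{4}\setminus\mathrm{int}(B_{\epsilon})$ symplectomorphically with a finite piece $[\epsilon,1]\times S^{3}$ of the symplectization of $(S^{3},\xi_{\mathrm{std}})$, and under this identification $L\setminus\mathrm{int}(B_{\epsilon})$ becomes a Lagrangian cylinder joining $\mathcal{U}$ at the negative end to $\mathcal{K}$ at the positive end. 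Attaching standard cylindrical ends $(-\infty,\epsilon]\times\mathcal{U}$ and $[1,\infty)\times\mathcal{K}$ and reparametrizing produces the required Lagrangian concordance $\mathcal{U}\prec\mathcal{K}$.

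The main step demanding care is this geometric construction, specifically verifying via the Weinstein neighborhood theorem that the small unknot produced by the excision is really the $tb=-1$ Legendrian unknot, and that the truncated disk, after attaching cylindrical ends, fits the cylindrical-ends definition of Lagrangian concordance stated just before Theorem \ref{thm:concordance-map}. Once those details are in place, the corollary follows immediately by combining the concordance theorem with the unknot computation.
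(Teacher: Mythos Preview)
Your proposal is correct and follows essentially the same approach as the paper: assert that $\mathcal{U}\prec\mathcal{K}$, then apply Theorem \ref{thm:concordance-map} and Proposition \ref{pro:unknot-invariant} to obtain a map $KHM(-S^{3},K)\otimes\mathcal{R}\to\mathcal{R}$ taking $\ell(\mathcal{K})$ to a unit. The paper's proof simply states that $\mathcal{U}$ is Lagrangian concordant to $\mathcal{K}$ without further justification, whereas you supply the standard argument (excising a Darboux ball around an interior point of the Lagrangian disk and using the radial Liouville flow), so your version is a strictly more detailed execution of the same idea.
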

\begin{proof}
The Legendrian unknot $\mathcal{U}$ is Lagrangian concordant to $\mathcal{K}$,
and $\ell(\mathcal{U})$ is a generator of $KHM(-S^{3},U)\otimes\mathcal{R}\cong\mathcal{R}$
by Proposition \ref{pro:unknot-invariant}, so by Theorem \ref{thm:concordance-map}
there is a map $KHM(-S^{3},K)\otimes\mathcal{R}\to\mathcal{R}$ such
that the image of $\ell(\mathcal{K})$ is a unit.
\end{proof}
It is observed in the addendum to \cite{Chantraine} that the following
tangle replacement in the front projection (obtained from a 1-smoothing
of a crossing in the Lagrangian projection) can be realized by a Lagrangian
saddle cobordism:

\begin{center}
\includegraphics{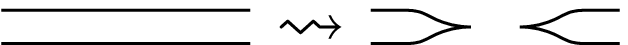}
\par\end{center}

\noindent If such a move turns a Legendrian knot $\mathcal{K}$ into
a Legendrian unlink whose components are both $\mathcal{U}$, we can
cap both components with Lagrangian disks and thus build a Lagrangian
slice disk for $\mathcal{K}$, proving that $\ell(\mathcal{K})$ is
a primitive element of $KHM(-S^{3},K)$. Figure \ref{fig:Lagrangian-slice}
shows grid diagrams for seven such knots, of topological types $m(9_{46})$,
$m(10_{140})$, $m(10_{140})$, $11n_{139}$, $m(12n_{582})$, $m(12n_{768})$,
and $m(12n_{838})$, which were discovered using a combination of
KnotInfo \cite{KnotInfo}, the Legendrian knot atlas \cite{Chongchitmate-Ng},
and Gridlink \cite{Gridlink}. As usual, these may be turned into
front projections of Legendrian knots by smoothing out all northeast
and southwest corners and then rotating 45 degrees counterclockwise.
The dotted line in each diagram indicates where to perform the tangle
replacement.

\begin{figure}
\begin{centering}
\includegraphics[scale=0.75]{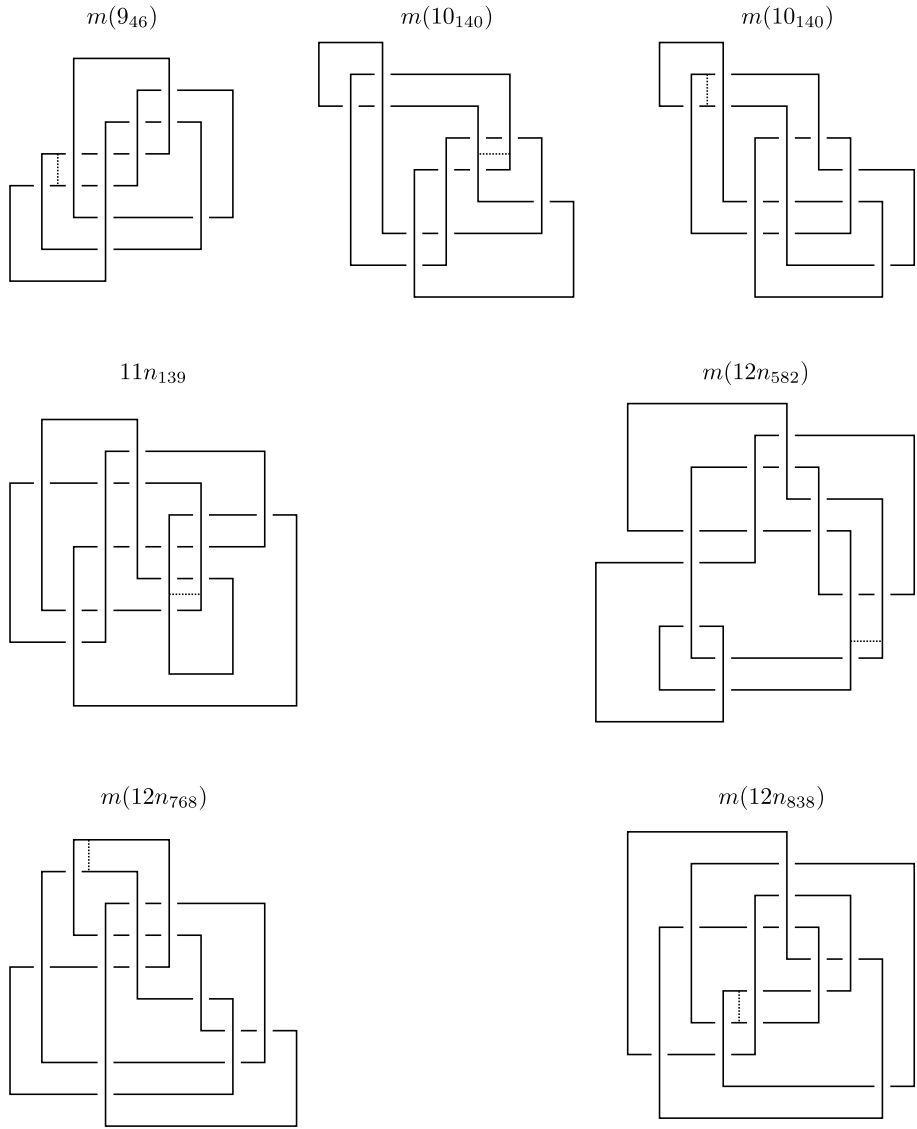}
\par\end{centering}

\caption{Seven Lagrangian knots $\mathcal{K}$ which bound Lagrangian disks
in $B^{4}$ and thus satisfy $\ell(\mathcal{K})\not=0$.\label{fig:Lagrangian-slice}}

\end{figure}

\begin{conjecture}
Given a Lagrangian cobordism $\mathcal{K}_{0}\prec_{\Sigma}\mathcal{K}_{1}$
of arbitrary genus, there is a map $KHM(-Y,K_{1})\otimes\mathcal{R}\to KHM(-Y,K_{0})\otimes\mathcal{R}$
sending $\ell(\mathcal{K}_{1})$ to $\ell(\mathcal{K}_{0})$.
\end{conjecture}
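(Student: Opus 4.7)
The plan is to carry out the construction from the proof of Theorem \ref{thm:concordance-map} with the concordance cylinder replaced by the genus-$g$ Lagrangian surface $\Sigma$, and then invoke Theorem \ref{thm:symplectic-cobordism}. First, I would choose closures $(\bar{Y}_i,\bar{R}_i)$ of $Y(\mathcal{K}_i)$ by the same recipe as in the concordance proof, with meridional sutures placed close together and the same auxiliary surface $T\times I$ and gluing, so that the knot-complement pieces $Z_i=\bar{Y}_i\setminus\mathrm{int}(Y(\mathcal{K}_i))$ become contactomorphic. This is possible because Chantraine's theorem forces $tb(\mathcal{K}_0)=tb(\mathcal{K}_1)$, so the dividing-curve slopes on the two boundary tori already match.

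Next, starting from $\Sigma\subset Y\times\mathbb{R}$ with cylindrical ends modeled on $\mathcal{K}_i\times\mathbb{R}$, I would apply Weinstein's Lagrangian tubular neighborhood theorem to identify a small neighborhood $N(\Sigma)$ with a neighborhood of the zero section of $T^*\Sigma$. Removing $N(\Sigma)$ and gluing $Z_0\times\mathbb{R}$ along the (contactomorphic) torus boundaries that appear near each end produces a symplectic $4$-manifold $W$ whose two ends are pieces of the symplectizations of $\bar{Y}_0$ and $\bar{Y}_1$. Hence $W$ is a boundary-exact symplectic cobordism from $(\bar{Y}_0,\bar{\xi}_0)$ to $(\bar{Y}_1,\bar{\xi}_1)$, and the chosen cycle $\eta\subset\bar{R}$ is common to both ends.

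The hard part will be verifying the cohomological hypothesis of Theorem \ref{thm:symplectic-cobordism}, namely that $H^1(W;\mathbb{Z})\to H^1(\bar{Y}_1;\mathbb{Z})$ be surjective. In the concordance proof this reduced, via Poincar\'e duality and the long exact sequence of $(W,\bar{Y}_0)$, to showing $H_2(\bar{Y}_0)\to H_2(W)$ injective, which was clinched by an Alexander-duality identification $H_2((Y\times\mathbb{R})\setminus\mathcal{L})\cong H_2(Y\setminus\mathcal{K}_0)$. For $g\geq 1$ the surface $\Sigma$ contributes an extra $\mathbb{Z}^{2g}$ to $H_1$, and correspondingly new classes to $H_*((Y\times\mathbb{R})\setminus N(\Sigma))$ that feed into $H_*(W)$ via Mayer--Vietoris. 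The plan is to run Mayer--Vietoris on the decomposition $W=((Y\times\mathbb{R})\setminus N(\Sigma))\cup(Z_0\times\mathbb{R})$, glued along the $S^1$-bundle of $T^*\Sigma$ spliced with $\partial Z_0\times\mathbb{R}$, and to check whether the new classes coming from $\Sigma$ are absorbed into the image of $H^1(W)\to H^1(\bar{Y}_1)$ or instead obstruct surjectivity. If they obstruct it, the fallback would be to restrict to \emph{exact} Lagrangian cobordisms (so that the primitive of $\omega$ on $\Sigma$ gives additional control), or to establish a refinement of Mrowka--Rollin \cite{Mrowka-cobordisms} that replaces the surjectivity hypothesis by the weaker condition that certain periods vanish.

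Granting the cohomological step, the proof concludes exactly as in Theorem \ref{thm:concordance-map}. Applying Theorem \ref{thm:symplectic-cobordism} to $W$ yields
\[
\psi(\bar{Y}_0,\bar{\xi}_0)=\Hto(W^{\dagger},\mathfrak{s}_{\omega})\bigl(\psi(\bar{Y}_1,\bar{\xi}_1)\bigr),
\]
and restricting $\Hto(W^{\dagger})$ to the $\mathrm{Spin}^c$ structures on $W$ that are extremal with respect to $\bar{R}$ on each end produces the desired homomorphism $KHM(-Y,K_1)\otimes\mathcal{R}\to KHM(-Y,K_0)\otimes\mathcal{R}$ sending $\ell(\mathcal{K}_1)$ to $\ell(\mathcal{K}_0)$.
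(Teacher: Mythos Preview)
The statement you are attempting to prove is left as an open conjecture in the paper; there is no proof to compare against. That said, your outline has a genuine gap that lies earlier than the cohomological step you flag.

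In the genus-zero case the Lagrangian $\mathcal{L}$ is a cylinder $S^1\times\mathbb{R}$, so the boundary of $(Y\times\mathbb{R})\setminus N(\mathcal{L})$ is diffeomorphic to $T^2\times\mathbb{R}$, which matches $\partial Z_0\times\mathbb{R}$ exactly and allows the gluing that produces $W$. For a Lagrangian $\Sigma$ of positive genus this fails: the boundary of $(Y\times\mathbb{R})\setminus N(\Sigma)$ is the unit conormal circle bundle over $\Sigma$, which is \emph{not} a product $T^2\times\mathbb{R}$ except near the two cylindrical ends. Gluing $Z_0\times\mathbb{R}$ ``along the torus boundaries that appear near each end,'' as you propose, leaves the portion of the boundary lying over the compact genus-$g$ core of $\Sigma$ unfilled, so your $W$ is not a symplectic manifold with only the two desired ends. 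Said differently, the closure $\bar{Y}_i$ is obtained by gluing $Z_i$ to the knot complement along a torus, and what would be needed here is a $4$-dimensional filling piece whose boundary is the full circle bundle over $\Sigma$ and which restricts to $Z_i$ over each end; the product $Z_0\times\mathbb{R}$ does not do this once $g(\Sigma)\geq 1$.

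Finding the right symplectic filling of that circle bundle, compatible with the contact structures at the ends and carrying the surface $\bar{R}$ through, is precisely what makes the higher-genus case nontrivial and is presumably why the paper states this only as a conjecture. The cohomological obstruction you identify is a second, independent issue that would still need to be addressed even after the construction of $W$ is repaired.
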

\bibliographystyle{amsplain}
\bibliography{references}

\end{document}